\documentclass[reqno,a4paper,11pt]{amsart}

\parindent=15pt
\parskip=3pt
\setlength{\textwidth}{152truemm}
\setlength{\oddsidemargin}{2pt}
\setlength{\evensidemargin}{2pt}
\setlength{\textheight}{215truemm}

\usepackage[all,poly]{xy}
\usepackage{amsfonts,stmaryrd}
\usepackage[mathcal]{eucal}
\usepackage{amssymb}
\usepackage{amsmath}
\usepackage{mathrsfs}
\usepackage{stmaryrd}
\usepackage{color}
\usepackage[pagebackref,colorlinks]{hyperref}
\usepackage{enumerate}

\theoremstyle{plain}
\newtheorem {Lem}{Lemma}
\newtheorem {The}{Theorem}

\newtheorem {Prob}{Problem}

\theoremstyle{remark}

\newtheorem {Exp}[Lem]{Example}

\theoremstyle{definition}

\newcommand\Label[1]{\label{#1}}

\newcommand{\ep}{\varepsilon}

\newcommand{\gm}{\mathfrak m}

\newcommand{\GL}{\operatorname{GL}}
\newcommand{\GU}{\operatorname{GU}}
\newcommand{\EU}{\operatorname{EU}}
\newcommand{\FU}{\operatorname{FU}}
\newif\ifcomm
\let\ifcomm\iffalse

\newcommand{\FormR}{A,\Lambda}


\newcommand{\FUnT}[2]{\FU(2n,t^{#1}A,t^{#1}#2,t^{#1}\Gamma)} 
\newcommand{\FUnTJ}[2]{\FU(2n,t^{#1}A,t^{#1}#2,t^{#1}\Delta)}
\newcommand{\FUnt}[2]{\FU(2n,t^{#1}#2,t^{#1}\Gamma)} 


\newcommand{\FFUnt}[1]{\FU(2n,t^{#1}A,t^{#1}\Lambda)}

\newcommand{\EUnT}[2]{\FU(2n,t^{#1}#2,t^{#1}#2,t^{#1}\Lambda)}
\newcommand{\EUnt}[2]{\FU(2n,t^{#1}#2,t^{#1}\Lambda)}

\newcommand{\FUnTk}[3]{\FU^{#3}(2n,t^{#1}A,t^{#1}#2,t^{#1}\Gamma)} 


\def\Sp{\operatorname{Sp}}

\def\SO{\operatorname{SO}}

\def\GL{\operatorname{GL}}

\def\SU{\operatorname{SU}}
\def\GU{\operatorname{GU}}
\def\EU{\operatorname{EU}}
\def\FU{\operatorname{FU}}
\def\CU{\operatorname{CU}}
\def\K{\operatorname{K}}

\def\Cent{\operatorname{Cent}}
\def\Max{\operatorname{Max}}

\def\sr{\operatorname{sr}}

\def\map{\longrightarrow}
\def\bar{\overline}
\def\epsilon{\varepsilon}
\def\e{\varepsilon}
\def\a{\alpha}

\def\Ga{\Gamma}


\title{Multiple Commutator Formulas for Unitary Groups}

\author{R.~Hazrat}
\address{
School of Computing and Mathematics\\
University of Western Sydney\\
Australia}
\email{r.hazrat@uws.edu.au}

\author{N.~Vavilov}
\address{Department of Mathematics and Mechanics,
St.~Petersburg State University, St.~Petersburg, Russia}
\email{nikolai-vavilov@yandex.ru}

\author{Z.~Zhang}
\address{Department of  Mathematics, Beijing Institute
of Technology, Beijing, China}
\email{zuhong@gmail.com}

\begin{thanks}
{The second author started this research within the framework
of the RFFI/Indian Academy cooperation project 10-01-92651
``Higher composition laws, algebraic $K$-theory and algebraic
groups'' (SPbGU--Tata Institute) and the RFFI/BRFFI cooperation
project 10-01-90016 ``The structure of forms of reductive
groups, and behaviour of small unipotent elements in
representations of algebraic groups'' (SPbGU--Mathematics
Institute of the Belorussian Academy). Currently his work is
supported by the RFFI research projects 11-01-00756 (RGPU) and
12-01-00947 (POMI), by the State Financed research task
6.38.74.2011 at the Saint Petersburg State University
``Structure theory and geometry of algebraic groups and their
applications in representation theory and algebraic $K$-theory''
and by the Presidential Grant 6.10.61.2012 for the leading
scientific schools. The third author acknowledges the support
of NSFC (Grant 10971011).}
\end{thanks}

\begin{document}

\begin{abstract}
Let $(\FormR)$ be a form ring such that $A$ is quasi-finite
$R$-algebra (i.e., a direct limit of module finite algebras) with
identity. We consider the hyperbolic Bak's unitary groups
$\GU(2n,\FormR)$, $n\ge 3$. For a form ideal $(I,\Gamma)$
of the form ring $(\FormR)$ we denote by $\EU(2n,I,\Gamma)$
and $\GU(2n,I,\Gamma)$ the relative elementary group and the
principal congruence subgroup of level $(I,\Gamma)$, respectively.
Now, let $(I_i,\Gamma_i) $, $i=0,...,m$, be form ideals of the
form ring $(A,\Lambda)$. The main result of the present paper is
the following multiple commutator formula
\[
\begin{split}
\big[\EU(2n,I_0,\Gamma_0),&\GU(2n,I_1,\Gamma_1),\GU(2n, I_2,\Gamma_2),
\ldots, \GU(2n,I_m,\Gamma_m)\big]=\\
&\big[\EU(2n,I_0,\Gamma_0),\EU(2n,I_1,\Gamma_1),\EU(2n,I_2,\Gamma_2),
\ldots, \EU(2n, I_m, \Gamma_m)\big],
\end{split}
\]
which is a broad generalization of the standard commutator formulas.
This result contains all previous results on commutator formulas
for classical like-groups over commutative and finite-dimensional
rings.
\end{abstract}

\maketitle

\section{Introduction}\label{sec1}

Let $(\FormR)$ be a form ring, $n\ge 3$ and let $\GU(2n,\FormR)$
be the hyperbolic Bak's unitary group \cite{HO,knus,BV3,RN}.
In the paper \cite{RNZ1} we obtained relative commutator formulas
for the unitary groups $\GU(2n,\FormR)$, under some natural
commutativity/finiteness assumptions on $(\FormR)$. The goal of
the present paper is to enhance the relative localisation method
developed in \cite{RNZ1} and to prove {\it multiple\/} relative
commutator formulas, which serve as a simultaneous generalisation
of all previously known such results. For the general linear group
$\GL(n,A)$ similar results were recently established by the
first and the third author in \cite{RHZZ2}.
\par
Actually, since the general linear group is a special case of
Bak's unitary group, the results of the present paper are not
only modeled on \cite{RHZZ2} but also generalise the results of
\cite{RHZZ2}. Our results are new already in the following
classical situations.
\par\smallskip
$\bullet$ The case of symplectic groups $\Sp(2l,R)$, where the
involution is trivial, and $\Lambda=R$.
\par\smallskip
$\bullet$ The case of even split orthogonal groups $\SO(2l,R)$,
where the involution is trivial and $\Lambda=0$.
\par\smallskip
$\bullet$ The case of classical unitary groups $\SU(2l,R)$,
where $\Lambda=\Lambda_{\max}$.
\par\smallskip
As the proofs in \cite{RHZZ2}, the proofs in the present paper
are based on a version of localisation. The two most familiar
versions of localisation are Quillen--Suslin's localisation and
patching, and Bak's localisation-completion. Actually, in this
paper we use a version of Bak's method \cite{B4}, which was first
applied to unitary groups in the Bielefeld Thesis of the first author
\cite{RH,RH2}. It is interesting to note that in this generality
the first convincing treatment of Quillen--Suslin's localisation and
patching method appeared only afterwards, in Petrov's Saint Petersburg
Thesis \cite{petrov1,petrov2,petrov3}. We do not attempt to give
an account of the historical development of localisation methods,
see our surveys \cite{RN,yoga} for more details and many related
references.
\par
More precisely, our proofs rely on a further enhancement of the
{\it relative\/} localisation method introduced by the first and
the third author \cite{RHZZ1} in the context of the general
linear group, and applied to Bak's unitary groups in \cite{RNZ1}
and to Chevalley groups in \cite{RNZ2}. Initially, this method
was proposed to address problems raised by Alexei Stepanov and
the second author \cite{NVAS}. See our published papers
\cite{RN1,BRN,RHZZ1,RNZ1,SV10,RHZZ2} and our forthcoming papers
\cite{RNZ2,yoga2,portocesareo,HSVZmult,HSVZ} for many
further recent applications of this method and other offsprings
of Bak's method, including the remarkable universal localisation
by Alexei Stepanov \cite{stepanov10}. Compare also the recent
papers by Anthony Bak, Rabeya Basu, Khanna, Alexander Luzgarev,
Victor Petrov, Ravi Rao, Anastasia Stavrova and Matthias Wendt
\cite{petrov1,petrov2,BRK,Basu06,BBR,PS08, Basu11,wendt,
luzgarevstavrova,stavrova}, for latest versions and fresh
applications of Quillen--Suslin's method.
\par
Since the present paper is an immediate sequel of \cite{RHZZ2,RNZ1},
we do not reproduce a detailed historical survey of the commutator
formulas, and do not discuss crucial early contributions by Hyman
Bass \cite{Bass2,Bass1}, Anthony Bak \cite{B1,bass73},
Andrei Suslin and Vyacheslav
Kopeiko \cite{SUS,suslinkopeiko,kopeiko,TUL}, Alec Mason and Wilson
Stothers
\cite{MAS3,Mason74,MAS1,MAS2}, Leonid Vaserstein \cite{V1,vaser86},
Zenon Borewicz and the second author \cite{borvav}, Giovanni Taddei
\cite{taddei}, and others.
Instead, we refer to our surveys \cite{NV91,RN} and to the
papers \cite{BV3,ASNV} for an accurate historic description and
many further references.
\par
However, to put our results in context, let us briefly review the
standard commutator formulas for unitary groups. Below,
$\EU(2n,\FormR)$ denotes the [absolute] elementary unitary group,
generated by the elementary root unipotents. As usual,
for a form ideal $(I,\Gamma)$ of the form ring $(\FormR)$ we
denote by $\EU(2n,I,\Gamma)$ the corresponding relative elementary
subgroup, by $\GU(2n,I,\Gamma)$ the principal congruence
subgroup of level $(I,\Gamma)$ and by $\CU(2n,I,\Gamma)$ the
full congruence subgroup of level $(I,\Gamma)$.
\par
One of the main results of Bak's Thesis \cite{B1}, Theorem 1.2
can be summarised as follows. Actually, the second formula is
not part of the statement of that theorem, but it appears in its
proof, at the bottom of page 4.2, see corollary 3.4 on page 3.22
of \cite{B1}, or \cite{bass73}. The group $\CU(2n,I,\Gamma)$
is defined differently, but from \cite{BV3} we know that in all
interesting situations, including the ones covered by Theorems~\ref{main_Bak}
 and \ref{main_BV} below, all definitions of these groups coincide.

\begin{The}[{Bak}]\label{main_Bak}
Let $R$ be a Noetherian commutative ring of Bass--Serre
dimension $d$ and let $(\FormR)$ be a form ring module finite
over $R$-algebra. Assume that $n\ge d+1,3$. Further, let
$(I,\Gamma)$ be a form ideal of the form ring $(\FormR)$. Then
\begin{equation}
\big[\GU(2n,A,\Lambda),\EU(2n,I,\Gamma)\big]=
\big[\EU(2n,A,\Lambda),\CU(2n,I,\Gamma)\big]=
\EU(2n,I,\Gamma).
\end{equation}
\end{The}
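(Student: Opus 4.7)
The plan is to reduce the theorem to a chain of inclusions. Since $\EU(2n,A,\Lambda) \subseteq \GU(2n,A,\Lambda)$ and $\EU(2n,I,\Gamma) \subseteq \CU(2n,I,\Gamma)$, both of the middle commutator subgroups contain $[\EU(2n,A,\Lambda),\EU(2n,I,\Gamma)]$. It therefore suffices to establish the lower bound
\[
\EU(2n,I,\Gamma) \subseteq [\EU(2n,A,\Lambda),\EU(2n,I,\Gamma)]
\]
together with the two upper bounds $[\GU(2n,A,\Lambda),\EU(2n,I,\Gamma)] \subseteq \EU(2n,I,\Gamma)$ and $[\EU(2n,A,\Lambda),\CU(2n,I,\Gamma)] \subseteq \EU(2n,I,\Gamma)$. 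The chain of three containments then collapses to three equalities.

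For the lower bound I would invoke the Chevalley-type commutator identities for the elementary root unipotents of Bak's unitary group. Since $n \ge 3$, any standard generator of $\EU(2n,I,\Gamma)$ --- a short root element $T_{ij}(\xi)$ with $\xi \in I$, or a long root element with parameter in $\Gamma$ --- admits a factorisation as a commutator of two elementary generators, one with parameter in $(A,\Lambda)$ and the other with parameter in $(I,\Gamma)$, using an intermediate index made available by $n \ge 3$. Because $\EU(2n,I,\Gamma)$ is the normal closure in $\EU(2n,A,\Lambda)$ of such generators, and since $[\EU(2n,A,\Lambda),\EU(2n,I,\Gamma)]$ is itself normal in $\EU(2n,A,\Lambda)$ (normality of $\EU(2n,I,\Gamma)$ in the absolute elementary group makes the ambient commutator stable under conjugation), the desired containment follows.

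The main obstacle is the two upper bounds, which together constitute the standard commutator formula. For $[\GU(2n,A,\Lambda),\EU(2n,I,\Gamma)] \subseteq \EU(2n,I,\Gamma)$ I would apply Bak's localisation-completion method: adjoin an indeterminate $t$, work inside the form ring $(A[t],\Lambda[t])$, localise at each maximal ideal $\mathfrak{m} \subset R$, and analyse how the principal congruence subgroup at level $t^N A_\mathfrak{m}$ interacts with the elementary subgroup. The Bass-Serre dimension bound $n \ge d+1$ provides the stable range needed to carry out the transvection surgery on any given unitary element locally, and the module-finiteness of $(A,\Lambda)$ over $R$ lets one patch only finitely many local results back to the global ring, forcing the commutator into $\EU(2n,I,\Gamma)$. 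For the $\CU$ version, the extra ingredient is that elements of $\CU(2n,I,\Gamma)$ are by definition central modulo $(I,\Gamma)$, so any commutator of an element of $\EU(2n,A,\Lambda)$ with an element of $\CU(2n,I,\Gamma)$ automatically lies in the principal congruence subgroup $\GU(2n,I,\Gamma)$; combining this observation with the localisation argument above (now applied to the larger group $\GU(2n,I,\Gamma)$) delivers the remaining inclusion into $\EU(2n,I,\Gamma)$.
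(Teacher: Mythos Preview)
The paper does not supply a proof of this theorem at all. Theorem~\ref{main_Bak} is stated in the introduction purely as historical background: the authors explicitly attribute it to Bak's 1969 thesis \cite{B1} (Theorem~1.2 and Corollary~3.4 there, with the second equality extracted from the body of Bak's proof), and they never return to prove it themselves. The present paper's own contributions begin with the relative and multiple commutator formulas (Theorems~\ref{main_HVZ}, \ref{main_multi}, \ref{main_triple}), which are proved under the weaker quasi-finiteness hypothesis and do not rely on a Bass--Serre dimension bound.

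Regarding the content of your sketch: the reduction to three inclusions and the lower bound via Steinberg relations are fine (indeed the lower bound is recorded in the paper as Lemma~\ref{hww3}). For the upper bounds, however, you should be aware that Bak's 1969 thesis predates his localisation--completion method \cite{B4} by over two decades; the original argument for Theorem~\ref{main_Bak} is a direct stable-range computation, not a localisation argument, and the step of ``adjoining an indeterminate $t$'' that you describe is characteristic of the Quillen--Suslin approach rather than Bak's. So while your outline is a plausible modern route to the result, it is neither the paper's proof (there is none) nor Bak's original one.
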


The following result is referred to as the {\it absolute standard
commutator formula\/}. In this generality it is established by
Anthony Bak and the second author \cite{BV2,BV3} and by Leonid
Vaserstein and You Hong \cite{VY}.\footnote{The paper \cite{VY}
uses a naive form of reduction modulo a form ideal, instead of
the correct form proposed in \cite{B1}, see also \cite{BV3}.
Thus, strictly speaking, the proofs in \cite{VY} are only valid
when $\Lambda=\Lambda_{\max}$. Still, it is independent of
\cite{BV2} and at the time was a non-trivial contribution to
our understanding of the structure of unitary groups.}
This result, and the more general Theorem~\ref{main_HVZ} will be used
throughout the present paper.

\begin{The}[{Bak--Vavilov, Vaserstein--You Hong}]\label{main_BV}
Let $n\ge 3$, $R$ be a commutative ring, $(\FormR)$ be a form ring such
that $A$ is a quasi-finite $R$-algebra. Further, let $(I,\Gamma)$
be a form ideal of the form ring $(\FormR)$. Then
\begin{equation}
\big[\GU(2n,A,\Lambda),\EU(2n,I,\Gamma)\big]=
\big[\EU(2n,A,\Lambda),\CU(2n,I,\Gamma)\big]=
\EU(2n,I,\Gamma).
\end{equation}
\end{The}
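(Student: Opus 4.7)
The overall strategy is to reduce Theorem \ref{main_BV} to Bak's Theorem \ref{main_Bak} via a direct limit argument together with Bak's localization-completion. The inclusion $\EU(2n,I,\Gamma)\subseteq[\EU(2n,A,\Lambda),\EU(2n,I,\Gamma)]$ is immediate from the Chevalley commutator formulas: in rank $n\ge 3$ every elementary root unipotent of level $(I,\Gamma)$ is a commutator of an absolute root unipotent with a relative one at the same level. Combined with the obvious inclusions $\EU\subseteq\GU$ and $\EU(2n,I,\Gamma)\subseteq\CU(2n,I,\Gamma)$, the theorem reduces to the two inclusions
\[
\big[\GU(2n,A,\Lambda),\EU(2n,I,\Gamma)\big]\subseteq\EU(2n,I,\Gamma), \qquad \big[\EU(2n,A,\Lambda),\CU(2n,I,\Gamma)\big]\subseteq\EU(2n,I,\Gamma).
\]

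For the first reduction, since $A$ is quasi-finite I would write $A=\varinjlim_\alpha A_\alpha$ as a filtered colimit of $R$-subalgebras module finite over $R$ and stable under the involution, with $\Lambda_\alpha=\Lambda\cap A_\alpha$, $I_\alpha=I\cap A_\alpha$, $\Gamma_\alpha=\Gamma\cap A_\alpha$. Since $\EU(2n,-)$, $\GU(2n,-)$, $\CU(2n,-)$ commute with filtered colimits, and any commutator $[g,h]$ involves only finitely many matrix entries, both inclusions reduce to the corresponding statements over some $(A_\alpha,\Lambda_\alpha)$ module finite over $R$. Next, for each maximal ideal $\mathfrak m$ of $R$ the localization $A_\mathfrak m$ is module finite over the Noetherian local ring $R_\mathfrak m$ of Bass--Serre dimension zero, so Bak's Theorem \ref{main_Bak} furnishes both commutator formulas after localization at $\mathfrak m$ under the hypothesis $n\ge 1$, a fortiori under $n\ge 3$.

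The key remaining step is to lift these local equalities to a global statement inside $\GU(2n,A,\Lambda)$, and I would do this with Bak's localization-completion \cite{RH,RH2,B4}. Given a commutator $y=[g,h]$ to be shown in $\EU(2n,I,\Gamma)$, for each $\mathfrak m$ one obtains an elementary factorization of the image of $y$ in $\GU(2n,A[1/s_\mathfrak m])$ for some $s_\mathfrak m\in R\setminus\mathfrak m$; finitely many such $s_\mathfrak m$ generate the unit ideal in $R$. These principal localizations are then glued by passing to $A[[t]]/(t^N-s_\mathfrak m)$ and iteratively applying relative commutator identities of the form $\big[\EU(2n,t^k A,t^k\Lambda),\CU(2n,I,\Gamma)\big]\subseteq\EU(2n,I,\Gamma)$ along the $t$-adic filtration. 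The main obstacle is precisely this patching: one must control the ``level'' of the elementary factorization as the denominators $s_\mathfrak m^{-k}$ are cleared, so that the final product lies in $\EU(2n,I,\Gamma)$ and not merely in some larger relative elementary subgroup. This is exactly where the rank hypothesis $n\ge 3$ is used essentially, through the availability of three pairwise-commuting hyperbolic pairs of indices that allow one to absorb the error terms via the Chevalley commutator formula.
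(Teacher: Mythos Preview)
The paper does not prove Theorem~\ref{main_BV}; it is quoted from the literature (Bak--Vavilov \cite{BV2,BV3} and Vaserstein--You~Hong \cite{VY}) and used as a standing input throughout. So there is no ``paper's own proof'' to compare against, only the original sources.

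Your outline is in the right spirit --- reduce to module-finite Noetherian data by direct limits, localise at maximal ideals to land in the semi-local case where stable-rank arguments or Theorem~\ref{main_Bak} apply, and then patch --- and this is indeed how \cite{BV3} and \cite{B4,RH} proceed. Two points, however, need correction. First, your direct-limit step yields $A_\alpha$ module finite over $R$, but you have not arranged $R$ (or rather $R_0$) to be Noetherian; the standard reduction (see \S\ref{sub:1.3}) simultaneously replaces $R_0$ by a finitely generated subring, and without this the subsequent appeal to Lemma~\ref{Lem:03}-type injectivity of $F_t$ on small $t$-adic neighbourhoods fails. Second, and more seriously, the object ``$A[[t]]/(t^N-s_{\mathfrak m})$'' is not part of Bak's localisation--completion; the method works directly in the principal localisation $A_{s}$, using the $t$-adic filtration by the congruence subgroups $\GU(2n,s^kA,s^k\Lambda)$ and the conjugation calculus of Lemma~\ref{Lem:cong2} to push commutators into arbitrarily deep levels, then pulling back via the injectivity of $F_s$ on a deep enough level. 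No formal power series or artificial roots of $s$ enter. Finally, be cautious with the second inclusion: $\CU(2n,I,\Gamma)$ is defined by a normaliser condition and does not obviously commute with filtered colimits, so the reduction to the module-finite case for $[\EU,\CU]$ requires an extra argument (or one invokes the description of $\CU$ as the preimage of the centre under reduction, as in \cite{BV3}).
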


In the context of Bak's unitary groups, the history of relative
standard commutator formula starts with the Thesis of G\"unter
Habdank \cite{Ha1}, see also \cite{Ha2}. The following result
is essentially \cite{Ha2}, Proposition 3.5. Actually, there it
is stated in terms of a certain rather technical quadratic stable
rank condition. Not to recall its definition here, we limit
ourselves with a special case of this result, under the same
assumption as Bak's theorem.

\begin{The}[{Habdank}]\label{main_Hab}
Let $R$ be a Noetherian commutative ring of Bass--Serre
dimension $d$ and let $(\FormR)$ be a form ring module finite
over $R$-algebra. Assume that $n\ge d+1,3$. Further, let $(I,\Gamma)$ and
$(J,\Delta)$ be two form ideals of the form ring $(\FormR)$. Then
\begin{equation}
\big[\EU(2n,I,\Gamma),\GU(2n,J,\Delta)\big]=
\big[\EU(2n,I,\Gamma),\EU(2n,J,\Delta)\big].
\end{equation}
\end{The}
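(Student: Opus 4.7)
The $\supseteq$ inclusion is immediate from $\EU(2n,J,\Delta)\subseteq\GU(2n,J,\Delta)$, so the content lies in the reverse inclusion. My plan is to reduce a generic commutator $[a,g]$ with $a\in\EU(2n,I,\Gamma)$ and $g\in\GU(2n,J,\Delta)$ to one involving a single elementary generator, and then unfold it via the Hall--Witt identity using the absolute commutator formula as a black box.

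First, using the standard identity $[xy,g]=[x,g]^{y}[y,g]$ together with the normality of $\EU(2n,I,\Gamma)$ in $\GU(2n,A,\Lambda)$ (a consequence of Theorem~\ref{main_Bak}), I would reduce to the case $a=zT_{\alpha}(\xi)z^{-1}$ where $z\in\EU(2n,A,\Lambda)$ and $T_{\alpha}(\xi)$ is an elementary root unipotent of level $(I,\Gamma)$. Since conjugation by $z$ preserves $\EU(2n,I,\Gamma)$, $\EU(2n,J,\Delta)$ and hence $[\EU(2n,I,\Gamma),\EU(2n,J,\Delta)]$, one may further assume $a=T_{\alpha}(\xi)$ outright. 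Next I would apply the Bass trick to write $T_{\alpha}(\xi)=[T_{\beta}(\eta),T_{\gamma}(\zeta)]$ as a commutator in which one factor lies in $\EU(2n,A,\Lambda)$ and the other in $\EU(2n,I,\Gamma)$; this uses only $n\ge 3$ for the root-chain combinatorics, although the long-root/short-root dichotomy of Bak's unitary groups has to be dealt with case-by-case.

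The heart of the argument is then an application of the Hall--Witt identity
\[
[[x,y^{-1}],z]^{y}\cdot[[y,z^{-1}],x]^{z}\cdot[[z,x^{-1}],y]^{x}=1,
\]
with $x=T_{\beta}(\eta)\in\EU(2n,A,\Lambda)$, $y=T_{\gamma}(\zeta)\in\EU(2n,I,\Gamma)$, and $z=g\in\GU(2n,J,\Delta)$. This rewrites the commutator $[[x,y],g]$ (after absorbing a harmless conjugation by $y$) as a product of two commutators whose inner brackets are $[T_{\gamma}(\zeta),g^{-1}]$ and $[g,T_{\beta}(\eta)^{-1}]$. By Theorem~\ref{main_BV} together with the normality statement in Theorem~\ref{main_Bak}, the first inner bracket lies in $\EU(2n,I,\Gamma)$ and the second in $\EU(2n,J,\Delta)$; the resulting outer commutators therefore sit in $[\EU(2n,I,\Gamma),\EU(2n,J,\Delta)]$. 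The outer conjugations by $y$, $z$, $x$ all lie in $\GU(2n,A,\Lambda)$, which normalizes both $\EU(2n,I,\Gamma)$ and $\EU(2n,J,\Delta)$, so they preserve the target subgroup and the desired containment follows.

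The main obstacle I expect is the bookkeeping in the Bass-trick step: in the unitary setting the Chevalley commutator formula produces form-parameter contributions, and one must verify in each root-length case that $T_{\alpha}(\xi)$ can be decomposed so that the ``$I$-part'' and ``$\Gamma$-part'' of its level are respected, rather than leaking into some strictly larger ideal. Once that combinatorial check is in place, the rest of the argument is a formal manipulation of commutator identities modulo Theorems~\ref{main_Bak} and~\ref{main_BV}, which are invoked entirely as black boxes; in particular, the Bass--Serre dimension hypothesis enters only through its role in those two theorems.
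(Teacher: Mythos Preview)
The paper does not actually give its own proof of this theorem: it is quoted as Habdank's result and attributed to \cite{Ha1,Ha2}. That said, your proposed argument has a genuine gap, independent of what Habdank did.

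The problem is the second Hall--Witt term. With your choice $x=T_\beta(\eta)\in\EU(2n,A,\Lambda)$, $y=T_\gamma(\zeta)\in\EU(2n,I,\Gamma)$, $z=g\in\GU(2n,J,\Delta)$, the term in question is $[[y,z^{-1}],x]^z$. You correctly observe that $[y,z^{-1}]\in\EU(2n,I,\Gamma)$ (by normality) and also $[y,z^{-1}]\in\EU(2n,J,\Delta)$ (by the absolute formula). But the \emph{outer} commutator is taken with $x\in\EU(2n,A,\Lambda)$, not with anything of level $(J,\Delta)$; so all you get is $[[y,z^{-1}],x]\in\EU(2n,I,\Gamma)\cap\EU(2n,J,\Delta)$, which is in general strictly larger than $[\EU(2n,I,\Gamma),\EU(2n,J,\Delta)]$. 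Your sentence ``the resulting outer commutators therefore sit in $[\EU(2n,I,\Gamma),\EU(2n,J,\Delta)]$'' is simply false for this term, and swapping the roles of $x$ and $y$ only shifts the difficulty to the other Hall--Witt factor.

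The fix is exactly the ``level calculation'' ingredient the paper alludes to when describing the Stepanov--Vavilov/You Hong approach. One needs the inclusion
\[
\big[\EU(2n,I,\Gamma),\GU(2n,J,\Delta)\big]\subseteq\GU\big(2n,(I,\Gamma)\circ(J,\Delta)\big),
\]
together with $\EU\big(2n,(I,\Gamma)\circ(J,\Delta)\big)\subseteq\big[\EU(2n,I,\Gamma),\EU(2n,J,\Delta)\big]$ (these are Lemma~\ref{Lem:Habdank} in the paper). Then, arguing at the subgroup level via the three subgroup lemma and $\EU(2n,I,\Gamma)=[\EU(2n,I,\Gamma),\EU(2n,A,\Lambda)]$, the offending factor becomes
\[
\Big[[\EU(2n,I,\Gamma),\GU(2n,J,\Delta)],\EU(2n,A,\Lambda)\Big]
\subseteq\big[\GU(2n,(I,\Gamma)\circ(J,\Delta)),\EU(2n,A,\Lambda)\big]
=\EU\big(2n,(I,\Gamma)\circ(J,\Delta)\big),
\]
which now does sit in $[\EU(2n,I,\Gamma),\EU(2n,J,\Delta)]$. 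Without this level step the Hall--Witt reduction is circular: you are expressing a generator of the left-hand side in terms of another element of the left-hand side of the same complexity.
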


For unitary groups, the following result was proven in our
previous paper \cite{RNZ1}. Both this result itself, and the
methods used in its proof are instrumental throughout the
present paper. Actually, it is the induction base of
our main theorem, and will be repeatedly invoked in its
proof.
\par
It is modeled on early contributions by Alec Mason and Wilson Stothers
\cite{MAS1,Mason74,MAS2,MAS3}. For the general linear group we gave
{\it three\/} independent proofs of a similar result:
Stepanov--Vavilov \cite{NVAS}, based on decomposition of unipotents,
Hazrat--Zhang \cite{RHZZ1}, based on localisation, and
Stepanov--Vavilov \cite{NVAS2}, based on the absolute commutator
formula and level calculations.
Unfortunately -- or, maybe, fortunately! -- at that time we
were not aware of the {\it extremely\/} important paper by
You Hong \cite{you92}, where a similar result was obtained for
Chevalley groups, with a proof very close to the {\it second\/}
proof by Stepanov and the second author, \cite{NVAS2}. For
otherwise we would not be as eager to develop a localisation
proof.

\begin{The}[{Hazrat--Vavilov--Zhang}]\label{main_HVZ}
Let $n\ge 3$, $R$ be a commutative ring, $(\FormR)$ be a form ring such
that $A$ is a quasi-finite $R$-algebra. Further, let $(I,\Gamma)$ and
$(J,\Delta)$ be two form ideals of the form ring $(\FormR)$. Then
\begin{equation}\Label{eq:final}
\big[\EU(2n,I,\Gamma),\GU(2n,J,\Delta)\big]=
\big[\EU(2n,I,\Gamma),\EU(2n,J,\Delta)\big].
\end{equation}
\end{The}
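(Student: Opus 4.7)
The inclusion $\supseteq$ is immediate, since $\EU(2n,J,\Delta)\subseteq\GU(2n,J,\Delta)$. For the reverse, it suffices to place each basic commutator $[\xi,g]$ with $\xi\in\EU(2n,I,\Gamma)$ and $g\in\GU(2n,J,\Delta)$ inside $[\EU(2n,I,\Gamma),\EU(2n,J,\Delta)]$. Because $A$ is quasi-finite, only finitely many entries of $\xi$ and $g$, and finitely many elements of $\Gamma$ and $\Delta$, actually intervene in such an expression; a direct limit argument then reduces us to the case where $A$ is module finite over a Noetherian commutative ring $R$, so that $\mathrm{Max}(R)$ is quasi-compact.

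The plan is to run the relative localisation machinery of \cite{RHZZ1} in the unitary setting, combining Bak's localisation-completion with the absolute formula of Theorem \ref{main_BV}. For each $\mathfrak{m}\in\mathrm{Max}(R)$ pick $s\in R\setminus\mathfrak{m}$ and consider the principal localisation homomorphism $F_s\colon\GU(2n,A)\to\GU(2n,A_s)$. In $\GU(2n,A_s)$, Theorem \ref{main_BV} places $[F_s(\xi),F_s(g)]$ inside the elementary subgroup of the corresponding localised form ideal, and hence exhibits it as a finite product of elementary generators with bounded $s$-denominators. The key technical step is the \emph{relative completion lemma}: there exists an integer $N=N(n,\xi)$, depending only on $n$ and the shape of $\xi$, such that after multiplying by $s^N$ each of these localised elementary factors lifts back to a genuine $\EU(2n,I,\Gamma)$-conjugate of an elementary root unipotent whose parameter lies in $J$ or in $\Delta$. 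In other words, the local contribution to $[\xi,g]$ is rewritten as a product of commutators $[e_k,\eta_k]$ with $e_k\in\EU(2n,I,\Gamma)$ and $\eta_k\in\EU(2n,J,\Delta)$.

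Once this local statement is secured, quasi-compactness of $\mathrm{Max}(R)$ provides finitely many $s_1,\ldots,s_p\in R$ with $s_1^N R+\cdots+s_p^N R=R$. A partition-of-unity argument in this presentation of $1\in R$, combined with the Hall--Witt style identity $[a,bc]=[a,b]\cdot{}^{b}[a,c]$ and the invariance of $\EU(2n,J,\Delta)$ under $\EU(2n,I,\Gamma)$-conjugation, then assembles the local expressions into a global product of conjugates. This puts $[\xi,g]$ into $[\EU(2n,I,\Gamma),\EU(2n,J,\Delta)]$, completing the proof.

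The main obstacle will be the relative completion lemma itself: controlling how large $N$ must be so that $s^N$ multiples of elementary matrices with denominators in $A_s$ land in the correct relative elementary subgroups at \emph{both} levels $(I,\Gamma)$ and $(J,\Delta)$ simultaneously. In the $\GL_n$ case handled in \cite{RHZZ1} one tracks a single ideal; here the form parameter forces a verification of compatibility with all unitary Chevalley-type commutator relations, including the ``long root'' identities involving $\Lambda$ and $\Gamma$, and one must ensure that the conjugating factor genuinely stays in $\EU(2n,I,\Gamma)$ throughout. It is precisely at this point that Bak's completion trick, applied relatively, becomes indispensable.
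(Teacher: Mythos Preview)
The paper does not prove Theorem~\ref{main_HVZ} here; it is quoted from \cite{RNZ1}. Still, the method is visible in \S\S\ref{sec7}--\ref{sec9}, and your sketch diverges from it at the decisive local step.

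Your plan is to apply Theorem~\ref{main_BV} in $A_s$ to get $[F_s(\xi),F_s(g)]\in\EU(2n,J_s,\Delta_s)$ and then lift each elementary factor back as an $\EU(2n,I,\Gamma)$-conjugate of a root unipotent with parameter in $J$ or $\Delta$. The first half is fine, but the second does not follow. The generators of $\EU(2n,J_s,\Delta_s)$ supplied by Lemma~\ref{genelm} are conjugates by elements of $\EU(2n,A_s,\Lambda_s)$, not of $\EU(2n,I_s,\Gamma_s)$, and once you have evaluated $[F_s(\xi),F_s(g)]$ as a bare element of $\EU(2n,J_s,\Delta_s)$ there is nothing left that remembers $(I,\Gamma)$. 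Your ``relative completion lemma'' therefore has no mechanism for forcing the conjugating factors into $\EU(2n,I,\Gamma)$, and as stated it only lands $[\xi,g]$ back in $\EU(2n,J,\Delta)$, not in the mixed commutator. (The further passage from ``$\EU(2n,I,\Gamma)$-conjugate of $T_{ij}(\beta)$'' to ``commutator $[e_k,\eta_k]$'' is also unjustified: ${}^{e}T_{ij}(\beta)=[e,T_{ij}(\beta)]\cdot T_{ij}(\beta)$ leaves an uncontrolled tail.)

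The actual argument in \cite{RNZ1}, reprised for the triple case in \S\ref{sec9}, runs the localisation on the \emph{other} factor. One first decomposes $\xi$ via Lemma~\ref{genelm} and a partition of unity in $R_0$ so that its root parameters carry high powers of $t$. In the semi-local ring $A_{\mathfrak m}$ one then factors $F_{\mathfrak m}(g)=\varepsilon h$ with $\varepsilon\in\EU(2n,J_{\mathfrak m},\Delta_{\mathfrak m})$ and $h\in\GU(2,J_{\mathfrak m},\Delta_{\mathfrak m})$, using stable rank~$1$ and \cite[9.1.4]{HO} rather than Theorem~\ref{main_BV}. Because $n\ge 3$, the $2\times 2$ block $h$ can be placed so as to commute with each generator of $\xi$, reducing the problem to commutators $[Z_{ij}(t^p\alpha,a),\varepsilon]$; these are then pushed into $[\FU(2n,t^lA,t^lI,t^l\Gamma),\FU(2n,t^lA,t^lJ,t^l\Delta)]$ by the relative conjugation/commutator calculus of Lemma~\ref{Lem:cong2} and pulled back via the injectivity of Lemma~\ref{Lem:03}. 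A smaller but genuine correction: localisation must take place in $R_0$, the subring generated by all $r\bar r$, not in $R$ itself, since $\Lambda$ is an $R_0$-module but need not be an $R$-module (see \S\ref{quasi-finite}).
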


Such was the state of art before the present paper. Here, we
generalise all these formulas to an arbitrary number of form ideals.
The main result of the present paper may be stated as follows.
For a start, multiple commutators can be interpreted as left
normed commutators.

\begin{The}\Label{main_multi}
Let $n\ge 3$, $R$ be a commutative ring, $(\FormR)$ be a form ring such
that $A$ is a quasi-finite $R$-algebra. Furthermore, let $(I_i,\Gamma_i)$,
$i=0,...,m$, be form ideals of the form ring $(\FormR)$. Then
\begin{multline*}
\big[\EU(2n,I_0,\Gamma_0),\GU(2n,I_1,\Gamma_1),\GU(2n,I_2,\Gamma_2),
\ldots,\GU(2n,I_m,\Gamma_m)\big]=\\
=\big[\EU(2n,I_0,\Gamma_0),\EU(2n,I_1,\Gamma_1),\EU(2n,I_2,\Gamma_2),
\ldots,\EU(2n,I_m,\Gamma_m)\big].
\end{multline*}
\end{The}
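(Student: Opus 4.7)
The plan is to argue by induction on $m$. The base case $m=1$ is exactly Theorem~\ref{main_HVZ}. The inclusion
\[
\big[\EU(2n,I_0,\Gamma_0),\ldots,\EU(2n,I_m,\Gamma_m)\big] \subseteq \big[\EU(2n,I_0,\Gamma_0),\GU(2n,I_1,\Gamma_1),\ldots,\GU(2n,I_m,\Gamma_m)\big]
\]
is trivial since $\EU(2n,I_i,\Gamma_i)\subseteq\GU(2n,I_i,\Gamma_i)$ and left-normed commutators are monotone in each slot, so the whole difficulty lies in the reverse containment.

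\textbf{Reduction via the inductive hypothesis.} Assume the theorem for $m-1$. Interpreting multiple commutators as left-normed commutators and applying the hypothesis to the inner bracket gives
\[
\big[\EU(2n,I_0,\Gamma_0),\GU(2n,I_1,\Gamma_1),\ldots,\GU(2n,I_m,\Gamma_m)\big] = \big[H,\ \GU(2n,I_m,\Gamma_m)\big],
\]
where $H:=\big[\EU(2n,I_0,\Gamma_0),\EU(2n,I_1,\Gamma_1),\ldots,\EU(2n,I_{m-1},\Gamma_{m-1})\big]$. It therefore suffices to prove the single containment
\[
\big[H,\ \GU(2n,I_m,\Gamma_m)\big] \subseteq \big[H,\ \EU(2n,I_m,\Gamma_m)\big],
\]
which is a direct generalisation of Theorem~\ref{main_HVZ} in which the first slot is replaced by the iterated commutator $H$ rather than a single relative elementary subgroup.

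\textbf{Core step: enhanced relative localization.} I would establish this containment by upgrading the relative localization argument of \cite{RNZ1} used to prove Theorem~\ref{main_HVZ}. Pick a generator $[h,g]$ with $h\in H$ and $g\in\GU(2n,I_m,\Gamma_m)$. After localizing at the maximal ideals $\gm$ of $R$ and exploiting Bak's localization-completion, one can write, modulo a factor absorbed into $\EU(2n,I_m,\Gamma_m)$, the localized $g$ as a bounded product of elementary root unitary generators with entries in $I_m$. Writing $h$ itself as a product of iterated commutators $[e_0,e_1,\ldots,e_{m-1}]$ of elementary generators $e_i\in\EU(2n,I_i,\Gamma_i)$ and using commutator identities of the form $[x,yz]=[x,z]\cdot{}^z[x,y]$ together with Hall--Witt, one can redistribute $g$ across the nested commutator so that each atomic bracket becomes a commutator of an element of $H$ with an elementary element of level $(I_m,\Gamma_m)$. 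Patching in the completion then yields $[h,g]\in[H,\EU(2n,I_m,\Gamma_m)]$.

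\textbf{Main obstacle.} The hardest point is controlling the \emph{conjugating} factors that appear when expanding the outer bracket $[[\cdots],g]$ via Hall--Witt: such conjugates are by elements of $\GU$ and need not themselves lie in $H$. In the single-level Theorem~\ref{main_HVZ} they are absorbed by a single invocation of the absolute commutator formula (Theorem~\ref{main_BV}); in our multi-level setting we need a uniform absorption across all $m$ slots simultaneously. Making the enhanced relative localization of \cite{RHZZ1,RNZ1} propagate through $m$ nested commutators, keeping track of the relative levels at each slot and repeatedly invoking Theorem~\ref{main_HVZ} as an absorption lemma, is the central technical hurdle and will require the bulk of the bookkeeping in the proof.
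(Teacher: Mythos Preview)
Your induction setup and the reduction to proving $[H,\GU(2n,I_m,\Gamma_m)]\subseteq[H,\EU(2n,I_m,\Gamma_m)]$ are fine and match the paper. The divergence is in how that key inclusion is proved. The paper does \emph{not} attack it by a direct localisation argument for arbitrary $m$. Instead it does two things: (i) it proves the \emph{triple} formula $[[E(I_0),G(I_1)],G(I_2)]=[[E(I_0),E(I_1)],E(I_2)]$ by an enhanced localisation argument (this is the real work, occupying \S\S7--9), and (ii) it uses the level sandwich
\[
\EU\big(2n,(I_0,\Gamma_0)\circ(I_1,\Gamma_1)\big)\subseteq
[\EU(2n,I_0,\Gamma_0),\EU(2n,I_1,\Gamma_1)]\subseteq
\GU\big(2n,(I_0,\Gamma_0)\circ(I_1,\Gamma_1)\big)
\]
(Lemma~\ref{Lem:Habdank} and Lemma~\ref{Lem:GGG}) to collapse the first two slots into one form ideal, reducing an $(m+1)$-fold commutator to an $m$-fold one and closing the induction purely formally (Lemma~\ref{comain}). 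The point is that once the triple case is available, no further localisation is needed.

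Your ``core step'' tries to run the localisation machine directly against the $m$-fold $H$, and this is where the gap is. The machine in \cite{RNZ1} (and the paper's upgrade of it in \S\S7--8) needs an explicit, manageable generating set for the left-hand factor. For $H=[\EU(I_0,\Gamma_0),\EU(I_1,\Gamma_1)]$ this is already nontrivial: Theorem~\ref{Comgenerator} supplies generators of the shape ${}^c[T_{ji}(\alpha),{}^{T_{ij}(a)}T_{ji}(\beta)]$, not simple iterated commutators of root elements as you suggest, and proving it takes a full section. For general $m$ you would need an $m$-fold analogue of Theorem~\ref{Comgenerator}, which you have not supplied and which is not in the literature. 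Moreover, your ``absorption'' step cannot be done by Theorem~\ref{main_HVZ} alone, since $H$ is \emph{not} of the form $\EU(2n,K,\Omega)$ for any form ideal, so that theorem does not apply to $[H,\GU(I_m)]$ directly. The paper's collapse-via-levels trick is precisely what circumvents this, and it is the missing idea in your outline.
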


This result is interesting in itself, but
its true significance is that it is absolutely indispensable
to proceed to the proof of the {\it general\/} multiple
commutator formula, which simultaneously generalises both
the standard commutator formulas and the nilpotent
structure of relative $\K_1$ established in \cite{RH,RH2,BRN}.
Using a whole bunch of difficult external results the authors
and Alexei Stepanov \cite{HSVZmult} have been able to establish such a
general multiple commutator formula for the case of $\GL(n,R)$,
but for other groups many tools are still missing, and Theorem~\ref{main_multi}
bridges one of these gaps.
\par
Multiple formula is also relevant as a prerequisite for the
description of subnormal subgroups of unitary groups. See \S\ref{sec11}
for further comments on these and other possible applications.
\par
Actually, in \S\ref{sec10} we prove a still more general result, where both
the position of the elementary factor in the left hand side, and the
arrangement of brackets may be arbitrary. In fact, Theorem~\ref{main_multi}
almost immediately implies the following result.

\begin{The}\Label{main_multi2}
Let $n\ge 3$, $R$ be a commutative ring, $(\FormR)$ be a form ring such
that $A$ is a quasi-finite $R$-algebra. Furthermore, let $(I_i,\Gamma_i)$,
$i=0,...,m$, be form ideals of the form ring $(\FormR)$ and $G_i$ be
subgroups of $\GU(2n,\FormR)$ such that
$$ \EU(2n,I_i,\Gamma_i)\subseteq G_i\subseteq \GU(2n,I_i,\Gamma_i),
\quad\text{ for } i=0,\ldots, m. $$
\noindent
If there is an index $j$ such that $G_j=\EU(2n,I_j,\Gamma_j)$, then
\begin{equation}\label{hhalkkk}
\big \llbracket G_0,G_1,\ldots, G_m\big \rrbracket=\big \llbracket
E(I_0),E(I_1),\ldots,E(I_m) \big \rrbracket.
\end{equation}
\end{The}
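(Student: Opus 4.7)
The inclusion $\supseteq$ in \eqref{hhalkkk} is immediate from the monotonicity of commutator subgroups, since $\EU(2n,I_i,\Gamma_i)\subseteq G_i$ for every $i$. For the reverse inclusion, using the sandwich $G_i\subseteq\GU(2n,I_i,\Gamma_i)$ and monotonicity once more, the problem reduces to showing, for the given fixed bracketing,
\[
\llbracket H_0, H_1, \ldots, H_m \rrbracket \subseteq \llbracket \EU(2n,I_0,\Gamma_0), \EU(2n,I_1,\Gamma_1), \ldots, \EU(2n,I_m,\Gamma_m) \rrbracket,
\]
where $H_j=\EU(2n,I_j,\Gamma_j)$ and $H_i=\GU(2n,I_i,\Gamma_i)$ for $i\neq j$.

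My plan is to induct on the depth of the bracketing tree, with the base case (a single binary commutator containing an $\EU$-factor) furnished by Theorem~\ref{main_HVZ}. For the inductive step, write the outermost bracket as $[X, Y]$; using $[A,B]=[B,A]$ at the top node we may assume the $\EU$-leaf lies inside $X$. The inductive hypothesis applied to $X$ gives $X \subseteq X'$, where $X'$ denotes the same bracketing of $X$'s subtree with all $\GU$-factors replaced by the corresponding $\EU$-factors; in particular $X' \subseteq \EU(2n,A,\Lambda)$. It remains to replace the $\GU$-factors inside $Y$ by $\EU$-factors, which amounts to showing $[X',Y]\subseteq [X',Y']$. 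For this I would apply the Hall--Witt identity (the three subgroups lemma) to rewrite $[X', Y]$ as a product of commutators in which the $\EU$-descendant of $X'$ has been moved across the outermost bracket. Each resulting left-normed commutator then has $\EU(2n,I_j,\Gamma_j)$ at position $0$ followed by a string of $\GU$-factors, so Theorem~\ref{main_multi} applies and converts the $\GU$'s to $\EU$'s. Reversing the Hall--Witt rewriting places the result back inside the original bracketing, now with $\EU$-factors throughout.

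The technical underpinning is the normality of both $\EU(2n,I_i,\Gamma_i)$ and $\GU(2n,I_i,\Gamma_i)$ in $\GU(2n,A,\Lambda)$, the former supplied by Theorem~\ref{main_BV}; this normality ensures that every Hall--Witt rewriting preserves containment in the target subgroup and that all conjugates that appear land back inside the appropriate $\EU$-commutator. The main obstacle is precisely the bookkeeping for these Hall--Witt expansions: each step produces a sum of several commutators with permuted and rebracketed arguments, and one must verify that every such summand still lies in $\llbracket \EU(2n,I_0,\Gamma_0), \ldots, \EU(2n,I_m,\Gamma_m)\rrbracket$. It is here that Theorem~\ref{main_multi}, rather than the single-binary Theorem~\ref{main_HVZ}, is indispensable, since the multi-step tree structure cannot be absorbed one binary commutator at a time.
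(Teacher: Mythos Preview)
Your proposal has a genuine gap at the Hall--Witt step. After writing the outermost bracket as $[X',Y]$ with $X'$ the all-elementary subtree, you invoke the three subgroups lemma to ``move the $\EU$-descendant of $X'$ across'' and then claim that ``each resulting left-normed commutator has $\EU(2n,I_j,\Gamma_j)$ at position $0$''. But the three subgroups lemma applied to $[X',[Y_1,Y_2]]$ yields pieces like $[[Y_1,X'],Y_2]$ and $[Y_1,[Y_2,X']]$, where $X'$ is itself a bracketed tree of several elementary factors, not a single leaf $\EU(2n,I_j,\Gamma_j)$, and $Y_1,Y_2$ may themselves be trees. These are not left-normed, and Theorem~\ref{main_multi} does not apply to them as stated. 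Iterating Hall--Witt only makes the bookkeeping worse, and your phrase ``reversing the Hall--Witt rewriting'' is not meaningful: the three subgroups lemma is a one-way inclusion, so there is nothing to reverse to land back in $[X',Y']$.

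The paper's argument avoids Hall--Witt entirely here and instead uses the level calculation, Lemma~\ref{Lem:GGG} together with Lemma~\ref{Lem:Habdank} (axiomatised as condition (M4)). One inducts on the number $m$ of leaves. If the bracketing contains an innermost pair $[G_i,G_{i+1}]$ with $i,i+1\neq j$, then (M4) gives $[G_i,G_{i+1}]\subseteq\GU(2n,(I_i,\Gamma_i)\circ(I_{i+1},\Gamma_{i+1}))$, collapsing two leaves to one; induction applies, and the other half of (M4), namely $\EU(2n,(I_i,\Gamma_i)\circ(I_{i+1},\Gamma_{i+1}))\subseteq[\EU(2n,I_i,\Gamma_i),\EU(2n,I_{i+1},\Gamma_{i+1})]$, expands the result back to the original bracketing with all factors elementary. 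If no such pair exists, the bracketing is (up to (C6)) left-normed and Theorem~\ref{main_multi} applies directly. This collapse--expand trick is the missing idea in your proposal.
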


Observe though, that
the arrangement of brackets in~(\ref{hhalkkk}) should be the same on both sides,
the mixed commutators are {\it not\/} associative! In particular,
there are easy counter-examples which show that in general
\begin{multline*}
\Big[\big[\EU(2n,I,\Gamma),\EU(2n,J,\Delta)\big],\EU(2n,K,\Omega)\Big]\neq\\
\Big[\EU(2n,I,\Gamma),\big[\EU(2n,J,\Delta),\EU(2n,K,\Omega)\big]\Big].
\end{multline*}
\par
Actually, the difficult part of the proof of Theorem~\ref{main_multi}, which
allows to carry through inductive step is the following
{\it triple\/} commutator formula. It is precisely the proof
of that special case that requires new ideas, as compared with
\cite{RNZ1}, and entails most of the technical strain.

\begin{The}\label{main_triple}
Let $n\ge 3$, $R$ be a commutative ring, $(\FormR)$ be a form ring such
that $A$ is a quasi-finite $R$-algebra. Further, let $(I,\Gamma)$,
$(J,\Delta)$ and $(K,\Omega)$ be three form ideals of a form ring
$(\FormR)$. Then
\begin{multline*}
\Big[\big[\EU(2n,I,\Gamma),\GU(2n,J,\Delta)\big],\GU(2n,K,\Omega)\Big]=\\
\Big[\big[\EU(2n,I,\Gamma),\EU(2n,J,\Delta)\big],\EU(2n,K,\Omega)\Big].
\end{multline*}
\end{The}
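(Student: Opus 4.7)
The inclusion $\supseteq$ is immediate, since $\EU(2n,I_i,\Gamma_i)\subseteq \GU(2n,I_i,\Gamma_i)$ for each $i$. For the converse, the first move is to apply Theorem~\ref{main_HVZ} to the \emph{inner} bracket on the left-hand side, which rewrites $[\EU(2n,I,\Gamma),\GU(2n,J,\Delta)]$ as $[\EU(2n,I,\Gamma),\EU(2n,J,\Delta)]$. Thus it suffices to prove
\[
\big[[\EU(2n,I,\Gamma),\EU(2n,J,\Delta)],\GU(2n,K,\Omega)\big]\subseteq \big[[\EU(2n,I,\Gamma),\EU(2n,J,\Delta)],\EU(2n,K,\Omega)\big],
\]
or equivalently that a generic element $[[a,b],g]$, with $a\in\EU(2n,I,\Gamma)$, $b\in\EU(2n,J,\Delta)$, $g\in\GU(2n,K,\Omega)$, lies in the subgroup on the right. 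Since such an element involves only finitely many matrix entries from $A$, a standard direct-limit argument lets us replace $A$ by a module-finite $R$-subalgebra, and correspondingly $R$ by a Noetherian subring, thereby reducing to the case in which $A$ is module-finite over a Noetherian commutative ring.

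The second step is the relative localisation--completion method of Bak, in the form developed in \cite{RH,RH2} and sharpened in \cite{RNZ1}. For each maximal ideal $\gm$ of $R$, choose $s\in R\setminus \gm$ and work in the localisation $A_s$ together with the level subgroups $\EU(2n,s^{N}A,s^{N}\Lambda)$. Standard exponent/conductor estimates supply an ``injectivity radius'': for prescribed $m$ and a prescribed finite set of elements, there is $N\gg 0$ such that conjugation by $\EU(2n,s^{N}A,s^{N}\Lambda)$ can be tracked modulo $\EU(2n,s^{m}A,s^{m}\Lambda)$. Combining this with Hall--Witt style commutator identities to shuffle $a$, $b$ and $g$ past one another, one rewrites $g$ as a product that, modulo a factor absorbed into the target subgroup, consists of elementary elements of $\EU(2n,K,\Omega)$. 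A Noetherian patching argument over a finite cover of $\operatorname{Spec}(R)$ by principal opens then promotes the local conclusion to the global one, as in \cite{RNZ1}.

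The hard part, and the reason the argument must go beyond \cite{RNZ1}, is that the inner slot of the outer bracket is no longer a single elementary generator but the commutator $[a,b]$, which is itself built from two distinct relative elementary groups. Consequently the localisation--completion step has to be carried out for \emph{doubly relative} data, keeping simultaneous control of all three levels $(I,\Gamma)$, $(J,\Delta)$ and $(K,\Omega)$: under the conjugations used in the patching, $a$ must remain in (a conjugate of) $\EU(2n,I,\Gamma)$, $b$ in (a conjugate of) $\EU(2n,J,\Delta)$, and the correction to $g$ must be an elementary element of $\EU(2n,K,\Omega)$, all with a single uniform injectivity radius $N$. Setting up this refined level bookkeeping, and the accompanying commutator calculus for doubly relative elements, is the technical heart of the proof; once it is in place, the Noetherian patching and passage back to the quasi-finite case proceed in the same manner as in the two-ideal case.
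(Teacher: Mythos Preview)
Your outline follows the same architecture as the paper's proof: reduce to the module-finite Noetherian case, apply Theorem~\ref{main_HVZ} to the inner bracket, then run a relative localisation--patching argument \`a la \cite{RNZ1} at each maximal ideal of $R_0$, using injectivity of $F_t$ on deep $t$-adic neighbourhoods (Lemma~\ref{Lem:03}) to pull back. You also correctly identify that the difficulty is the ``doubly relative'' nature of the inner factor $[a,b]$.

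What is missing is the concrete mechanism that makes the hard part go through. In the paper this is Theorem~\ref{Comgenerator}: an explicit generating set for $[\EU(2n,I,\Gamma),\EU(2n,J,\Delta)]$ consisting of $\EU(2n,\FormR)$-conjugates of elements of the very special shapes
\[
\big[T_{ji}(\alpha),{}^{T_{ij}(a)}T_{ji}(\beta)\big],\qquad \big[T_{ji}(\alpha),T_{ij}(\beta)\big],\qquad T_{ij}(\xi).
\]
These generators are the reason the localisation step works, for two independent reasons. First, each such $e$ is supported on only the rows and columns $\pm i,\pm j$; after writing $F_t(g)=\varepsilon h$ with $\varepsilon\in\EU(2n,K_t,\Omega_t)$ and $h\in\GU(2,K_t,\Omega_t)$ (surjective stability in the semilocal ring $A_\gm$), one can place $h$ away from $\pm i,\pm j$ so that $h$ \emph{commutes} with $e$ and drops out of $[e,g]$. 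Your plan to ``rewrite $g$ modulo a factor absorbed into the target'' does not account for this: the factor $h$ is not elementary and is not absorbed anywhere---it is eliminated by commutation, and that only works because $e$ has small support. Second, the parameter $\alpha$ in these generators is additive, so a partition of unity $\sum t_{\gm_s}^{p_s}x_s=1$ splits $e$ into pieces each carrying a high power of some $t_{\gm_s}$, which is exactly what feeds into the triple-commutator estimate (Lemma~\ref{Lem:08}). For a generic $[a,b]$ with $a\in\EU(2n,I,\Gamma)$, $b\in\EU(2n,J,\Delta)$ neither of these manoeuvres is available, so the ``refined level bookkeeping'' you allude to cannot be carried out without first passing to these generators. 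In short, your sketch is correct at the strategic level, but the execution hinges on Theorem~\ref{Comgenerator} and the associated Lemmas~\ref{Lem:FFEF} and~\ref{Lem:08}, which you should name as the new ingredients rather than leaving the hard part as an unspecified calculus.
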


In turn, modulo the standard commutator formula the proof of Theorem~\ref{main_triple}
amounts to the proof of the following equality:
\begin{multline*}
\Big[\big[\EU(2n,I,\Gamma),\EU(2n,J,\Delta)\big],\GU(2n,K,\Omega)\Big]=\\
\Big[\big[\EU(2n,I,\Gamma),\EU(2n,J,\Delta)\big],\EU(2n,K,\Omega)\Big].
\end{multline*}
\noindent
Essentially, the proof of this last equality is the technical core
of the present paper. It cannot be established with the use of the
relative commutator calculus developed in our paper \cite{RNZ1}.
In fact, to prove it we have to develop another layer of the commutator
calculus, which works with the generators of
$\big[\EU(2n,I,\Gamma),\EU(2n,J,\Delta)\big]$ rather than with
the usual elementary generators.
\par
Let us mention an {\it amazing\/} corollary of our results. It shows
that {\it any\/} multiple commutator of relative elementary groups
always equals a {\it double\/} mixed commutator, for some other
form ideals. In the following lemma $(I,\Gamma)\circ(J,\Delta)$
denotes the symmetrised product of form ideals, whose definition is
recalled in \S\ref{sec2}. This product is not associative, the bracketing
of the form ideals on the right hand side should correspond to the
bracketing of commutators on the left-hand side.

\begin{The}\Label{main_multi3}
Let $n\ge 3$, $R$ be a commutative ring, $(\FormR)$ be a form ring such
that $A$ is a quasi-finite $R$-algebra. Furthermore, let $(I_i,\Gamma_i)$,
$i=0,...,m$, be form ideals of the form ring $(\FormR)$. Consider an
arbitrary configuration of brackets $[\![\ldots]\!]$ and assume that
the outermost pair of brackets between positions $k$ and $k+1$. Then
\begin{multline*}
\big\llbracket\EU(2n,I_0,\Gamma_0),\EU(2n,I_1,\Gamma_1),\EU(2n,I_2,\Gamma_2),
\ldots,\EU(2n,I_m,\Gamma_m)\big\rrbracket=\\
=\Big[\EU\big(2n,(I_0,\Gamma_0)\circ\ldots\circ(I_k,\Gamma_k)\big),
\EU\big(2n,(I_{k+1},\Gamma_{k+1})\circ\ldots\circ(I_m,\Gamma_m)\big)\Big].
\end{multline*}
\end{The}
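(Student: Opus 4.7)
The plan is a straightforward induction on $m$, resting entirely on the two-term identity
$$[\EU(2n, I, \Gamma), \EU(2n, J, \Delta)] = \EU\big(2n, (I, \Gamma) \circ (J, \Delta)\big), \qquad (*)$$
which should already be available from the preceding sections of the paper, combining the equality $[\EU(I, \Gamma), \EU(J, \Delta)] = [\EU(I, \Gamma), \GU(J, \Delta)]$ of Theorem~\ref{main_HVZ} with the definition of $\circ$ as precisely the form ideal for which this commutator collapses to a single elementary subgroup.

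First I would deduce from $(*)$, by induction on the number of factors, the following stronger claim: for \emph{any} bracketing,
$$\big\llbracket \EU(2n, I_0, \Gamma_0), \ldots, \EU(2n, I_m, \Gamma_m) \big\rrbracket = \EU\big(2n, (I_0, \Gamma_0) \circ \cdots \circ (I_m, \Gamma_m) \big),$$
where the symmetrised product on the right is bracketed in exactly the same way as the commutator on the left. The base case $m = 1$ is just $(*)$. For the inductive step, I would split the outermost comma to write the left-hand side as $[L, R]$, apply the inductive hypothesis separately to $L$ and $R$ (each of which involves strictly fewer factors) to obtain $L = \EU(2n, A)$ and $R = \EU(2n, B)$ for matching bracketed products $A$ and $B$, and then invoke $(*)$ once more to conclude $[L, R] = \EU(2n, A \circ B)$. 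Theorem~\ref{main_multi3} would then drop out immediately: splitting the given commutator at the outermost pair of brackets between positions $k$ and $k+1$ as $[L, R]$, the stronger claim identifies $L = \EU(2n, (I_0, \Gamma_0) \circ \cdots \circ (I_k, \Gamma_k))$ and $R = \EU(2n, (I_{k+1}, \Gamma_{k+1}) \circ \cdots \circ (I_m, \Gamma_m))$ with their inherited bracketings, which is precisely the asserted form.

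The only nontrivial ingredient will be $(*)$ itself, and once it is in hand the remainder is entirely formal. The non-associativity of $\circ$ should never cause trouble because at each recursive step the sub-bracketing of the subgroups $L$ and $R$ exactly dictates the sub-bracketing of the form-ideal products $A$ and $B$. The genuine obstacle inside $(*)$ will be the inclusion $[\EU(I, \Gamma), \EU(J, \Delta)] \subseteq \EU\big((I, \Gamma) \circ (J, \Delta)\big)$ — the reverse containment being essentially built into the definition of the symmetrised product — and this is exactly the point at which Theorem~\ref{main_HVZ} and the standard level calculations for Bak's unitary groups are expected to do the heavy lifting.
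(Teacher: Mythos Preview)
Your entire argument rests on the identity $(*)$, namely $[\EU(2n,I,\Gamma),\EU(2n,J,\Delta)]=\EU(2n,(I,\Gamma)\circ(J,\Delta))$, and this identity is \emph{false} in general. The paper says so explicitly in the introduction, immediately after the statement of Theorem~\ref{main_multi3}: ``in general the double commutators $[\EU(2n,I,\Gamma),\EU(2n,J,\Delta)]$ do not coincide with the elementary subgroups $\EU(2n,(I,\Gamma)\circ(J,\Delta))$\ldots there are counter-examples even for such nice rings as Dedekind rings of arithmetic type.'' What is available from the preceding sections is only the sandwich of Lemma~\ref{Lem:Habdank} (condition (M4)): $\EU((I,\Gamma)\circ(J,\Delta))\subseteq[\EU(I,\Gamma),\EU(J,\Delta)]\subseteq\GU((I,\Gamma)\circ(J,\Delta))$. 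Consequently your ``stronger claim'' that $\llbracket\EU(I_0,\Gamma_0),\ldots,\EU(I_m,\Gamma_m)\rrbracket$ is itself a single elementary group also fails, already at $m=1$. Indeed, the whole point of the theorem --- the reason the paper calls it ``amazing'' --- is that although the double commutator is \emph{not} an elementary group, every deeper multicommutator collapses back down to a \emph{double} one.

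The paper's actual proof (Lemma~\ref{main4}) exploits the sandwich rather than an equality. Iterating (M4) gives $\EU(I_0\circ\cdots\circ I_k)\subseteq\llbracket\EU(I_0),\ldots,\EU(I_k)\rrbracket\subseteq\GU(I_0\circ\cdots\circ I_k)$, and likewise for the right block. One then squeezes: passing to $\GU$ on one side and invoking Theorem~\ref{main_multi2} (or directly (M2)) to replace $\GU$ by $\EU$ gives the chain
\[
\big[\llbracket\EU\rrbracket,\llbracket\EU\rrbracket\big]\subseteq\big[\GU,\llbracket\EU\rrbracket\big]=\big[\EU,\llbracket\EU\rrbracket\big]\subseteq\big[\EU,\GU\big]=\big[\EU,\EU\big]\subseteq\big[\llbracket\EU\rrbracket,\llbracket\EU\rrbracket\big],
\]
closing the loop. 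The essential ingredient you are missing is not $(*)$ but the interplay between the $\EU$--$\GU$ sandwich and the relative commutator formula $[\EU,\GU]=[\EU,\EU]$.
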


As opposed to that, in general the double commutators
$\big[\EU(2n,I,\Gamma),\EU(2n,J,\Delta)\big]$ do not coincide with
the elementary subgroups $\EU(2n,(I,\Gamma)\circ(J,\Delta))$. This is
indeed the case, when $I$ and $J$ are comaximal, $I+J=A$, but without
this additional assumption there are counter-examples even for such
nice rings as Dedekind rings of arithmetic type.

\par
The paper is organised as follows. In \S\S\ref{sec2}--\ref{sec4} we recall basic
notation, and some background facts concerning form rings and form
ideals, Bak's unitary groups and their relative subgroups, on which
the rest of the paper relies. The rest of the paper is devoted to
detailed proofs of the above Theorems~\ref{main_multi} and~\ref{main_multi2}. In \S\ref{sec5} and \S\ref{sec6}
we prove two important general results of technical nature, which
improve and elaborate the results of \cite{BV3} and \cite{RNZ1}.
Namely, in \S\ref{sec5} we finalise the calculation of levels for the
mixed commutator subgroups $\big[\GU(2n,I,\Gamma),\GU(2n,J,\Delta)\big]$,
whereas in \S\ref{sec6} we construct a generating system of the mixed
elementary commutator subgroups
$\big[\EU(2n,I,\Gamma),\EU(2n,J,\Delta)\big]$. The next
three sections constitute the technical core of the paper. Namely, in
\S\ref{sec7} and \S\ref{sec8} we unfold another layer of the relative commutator
calculus, with the elementary generators being replaced by our new
generators of the mixed commutator subgroups. This brings us to the
stage, where we can carry through the usual patching procedure to prove
the {\it triple\/} relative commutator formula. This is accomplished
in \S\ref{sec9}. At this point, we are almost there: the rest follows from
the double and the triple formulas and level calculations by the
standard group theoretic arguments. Not to repeat these routine
arguments in future, in \S\ref{sec10} we do this part of the proof axiomatically,
for all group functors enjoying some formal properties. Finally, in
\S\ref{sec11} we indicate some further possible applications of our results
and state some unsolved problems.


\section{Form rings and form ideal}\label{sec2}

The notion of $\Lambda$-quadratic forms, quadratic modules and generalised
unitary groups over a form ring $(A,\Lambda)$ were introduced by Anthony
Bak in his Thesis, see \cite{B1,B2}. In this section, and the next one, we
{\it very briefly\/} review the most fundamental notation and results
that will be constantly used in the present paper. We refer to
\cite{B2,HO,knus,BV3,RH,RH2,RN,tang,lavrenov} for details, proofs, and
further references.


\subsection{}\label{form algebra}
Let $R$ be a commutative ring with $1$, and $A$ be an (not necessarily
commutative) $R$-algebra. An involution, denoted by $\bar{\phantom{\a}}$, 
is an
anti-homomorphism of $A$ of order $2$. Namely, for $\alpha,\beta\in A$,
one has $\overline{\alpha+\beta}=\bar\alpha+\bar\beta$, \
$\overline{\alpha\beta}=\bar\beta\bar\alpha$ and $\bar{\bar\alpha}=\alpha$.
Fix an element $\lambda\in\Cent(A)$ such that $\lambda\bar\lambda=1$. One may
define two additive subgroups of $A$ as follows:
$$ \Lambda_{\min}=\{\alpha-\lambda\bar\alpha\mid\alpha\in A\}, \qquad
\Lambda_{\max}=\{\alpha\in A\mid \alpha=-\lambda\bar\alpha\}. $$
\noindent
A {\em form parameter} $\Lambda$ is an additive subgroup of $A$ such that
\begin{itemize}
\item[(1)] $\Lambda_{\min}\subseteq\Lambda\subseteq\Lambda_{\max}$,
\smallskip
\item[(2)] $\alpha\Lambda\bar\alpha\subseteq\Lambda$ for all $\alpha\in A$.
\end{itemize}
The pair $(A,\Lambda)$ is called a {\em form ring}.


\subsection{}\label{form ideals}
Let $I\unlhd A$ be a two-sided ideal of $A$. We assume $I$ to be
involution invariant, i.~e.~such that $\bar I=I$. Set
$$ \Gamma_{\max}(I)=I\cap \Lambda, \qquad
\Gamma_{\min}(I)=\{\xi-\lambda\bar\xi\mid\xi\in I\}+
\langle\xi\alpha\bar\xi\mid \xi\in I,\alpha\in\Lambda\rangle. $$
\noindent
A {\em relative form parameter} $\Gamma$ in $(\FormR)$ of level $I$ is an
additive group of $I$ such that
\begin{itemize}
\item[(1)] $\Gamma_{\min}(I)\subseteq \Gamma \subseteq\Gamma_{\max}(I)$,
\smallskip
\item[(2)] $\alpha\Gamma\bar\alpha\subseteq \Gamma$ for all $\alpha\in A$.
\end{itemize}
The pair $(I,\Gamma)$ is called a {\em form ideal}.
\par
In the level calculations we will use sums and products of form
ideals. Let $(I,\Gamma)$ and $(J,\Delta)$ be two form ideals. Their sum
is artlessly defined as $(I+J,\Gamma+\Delta)$, it is immediate to verify
that this is indeed a form ideal.
\par
Guided by analogy, one is tempted to set
$(I,\Gamma)(J,\Delta)=(IJ,\Gamma\Delta)$. However, it is considerably
harder to correctly define the product of two relative form parameters.
The papers \cite{Ha1,Ha2,RH,RH2} introduce the following definition
$$ \Gamma\Delta=\Gamma_{\min}(IJ)+{}^J\Gamma+{}^I\Delta, $$
\noindent
where
$$ {}^J\Gamma=\big\langle \xi\Gamma\bar\xi\mid \xi\in J\big\rangle,\qquad
{}^I\Delta=\big\langle \xi\Delta\bar\xi\mid \xi\in I\big\rangle. $$
\noindent
One can verify that this is indeed a relative form parameter of level $IJ$
if $IJ=JI$.
\par
However, in the present paper we do not wish to impose any such
commutativity assumptions. Thus, we are forced to consider the
symmetrised products
$$ I\circ J=IJ+JI,\qquad
\Gamma\circ\Delta=\Gamma_{\min}(IJ+JI)+{}^J\Gamma+{}^I\Delta\big. $$
\noindent
The notation $\Gamma\circ\Delta$ -- as also $\Gamma\Delta$ is
slightly misleading, since in fact it depends on $I$ and $J$, not
just on $\Gamma$ and $\Delta$. Thus, strictly speaking, one should
speak of the symmetrised products of {\it form ideals\/}
$$ (I,\Gamma)\circ (J,\Delta)=
\big(IJ+JI,\Gamma_{\min}(IJ+JI)+{}^J\Gamma+{}^I\Delta\big). $$
\noindent
Clearly, in the above notation one has
$$ (I,\Gamma)\circ (J,\Delta)=(I,\Gamma)(J,\Delta)+(J,\Delta)(I,\Gamma). $$


\subsection{}\label{quasi-finite}
A {\em form algebra over a commutative ring $R$} is a form ring $(A,\Lambda)$,
where $A$ is an $R$-algebra and the involution leaves $R$ invariant, i.e.,
$\bar R=R$.
\par\smallskip
$\bullet$ A form algebra $(\FormR)$ is called {\it module finite}, if $A$ is
finitely generated as an $R$-module.
\par\smallskip
$\bullet$ A form algebra $(\FormR)$ is called {\it quasi-finite},
if there is a direct system of module finite  $R$-subalgebras $A_i$ of
$A$ such that $\varinjlim A_i=A$.
\par\smallskip
However, in general $\Lambda$ is not an $R$-module. This forces us to
replace $R$ by its subring $R_0$, generated by all $\alpha\bar\alpha$
with $\alpha\in R$. Clearly, all elements in $R_0$ are invariant with
respect to the involution, i.~e.\ $\bar r=r$, for $r\in R_0$.
\par
It is immediate, that any form parameter $\Lambda$ is an $R_0$-module.
This simple fact will be used throughout. This is precisely why we have
to localise in multiplicative subsets of $R_0$, rather than in those of
$R$ itself.


\subsection{}\label{localization}
Let $(A,\Lambda)$ be a form algebra over a commutative ring $R$ with $1$,
and let $S$ be a multiplicative subset of $R_0$, (see \S\ref{quasi-finite}).
For any $R_0$-module $M$ one can consider its localisation $S^{-1}M$
and the corresponding localisation homomorphism $F_S:M\map S^{-1}M$.
By definition of the ring $R_0$ both $A$ and $\Lambda$ are $R_0$-modules,
and thus can be localised in $S$.
\par
In the present paper, we mostly use localisation with respect to the
following two types of multiplication systems of $R_0$.
\par\smallskip
$\bullet$ {\it Principal localisation\/}: for any $s\in R_0$ with $\bar s=s$,
the multiplicative system generated by $s$ is defined as
$\langle s\rangle=\{1,s,s^2,\ldots\}$. The localisation of the form algebra
$(\FormR)$ with respect to multiplicative system $\langle s\rangle$ is usually
denoted by $(A_s,\Lambda_s)$, where as usual $A_s=\langle s\rangle^{-1}A$ and
$\Lambda_s=\langle s\rangle^{-1}\Lambda$ are the usual principal localisations
of the ring $A$ and the form parameter $\Lambda$.
Notice that, for each $\alpha\in A_s$, there exists an integer $n$ and an
element $a\in A$ such that $\displaystyle\alpha=\frac a{s^n}$, and for
each $\xi\in\Lambda_s$, there exists an integer $m$ and an element
$\zeta\in\Lambda$ such that $\displaystyle\xi=\frac\zeta{s^m}$.
\par\smallskip
$\bullet$ {\it Maximal localisation\/}: consider a maximal ideal $\gm\in\Max(R_0)$
of $R_0$ and the multiplicative closed set $S_{\gm}=R_0\backslash\gm$. We
denote the localisation of the form algebra $(\FormR)$ with respect to $S_{\gm}$
by $(A_\gm,\Lambda_\gm)$, where $A_\gm=S_{\gm}^{-1}A$ and
$\Lambda_\gm=S_{\gm}^{-1}\Lambda$ are the usual maximal localisations of the
ring $A$ and the form parameter, respectively.
\par\smallskip
In these cases the corresponding localisation homomorphisms will be
denoted by $F_s$ and by $F_{\gm}$, respectively.
\par
The following fact is verified by a straightforward computation.
\begin{Lem}
For any\/ $s\in R_0$ and for any\/ $\gm\in\Max(R_0)$ the
pairs\/ $(A_s,\Lambda_s)$ and\/ $(A_\gm,\Lambda_\gm)$ are form rings.
\end{Lem}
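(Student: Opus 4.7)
The plan is to verify, for each of the two localised pairs, the three defining features of a form ring: (i) that the involution on $A$ extends to an involution on the localised ring, (ii) that $\lambda$ remains a central element of the localised ring with $\lambda\bar\lambda = 1$, and (iii) that the localised form parameter satisfies the two axioms of \S\ref{form algebra}.

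First I would treat the principal localisation $(A_s,\Lambda_s)$, where the decisive point is that $s \in R_0$ is both central in $A$ and fixed by the involution, so one may set $\overline{a/s^n} := \bar a/s^n$. Well-definedness is the one subtle step: if $a/s^n = b/s^m$, then $s^k(s^m a - s^n b) = 0$ for some $k$, and applying the involution together with $\bar s = s$ and centrality of $s$ yields $s^k(s^m \bar a - s^n \bar b) = 0$, so $\bar a/s^n = \bar b/s^m$. That the extension is an order-two anti-homomorphism is then routine from the corresponding properties of $\bar{\phantom{a}}$ on $A$. Since the localisation homomorphism $F_s$ is a ring map and $\lambda$ lies in $\Cent(A)$, its image remains central in $A_s$ and still satisfies $\lambda\bar\lambda = 1$. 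For $\Lambda_s$ being a form parameter, the inclusion $\Lambda_{\min}(A_s) \subseteq \Lambda_s$ follows from $a/s^n - \lambda\,\overline{a/s^n} = (a - \lambda\bar a)/s^n$, whose numerator already lies in $\Lambda_{\min}(A) \subseteq \Lambda$; the inclusion $\Lambda_s \subseteq \Lambda_{\max}(A_s)$ is obtained by dividing the identity $\zeta = -\lambda\bar\zeta$ (which holds for $\zeta \in \Lambda \subseteq \Lambda_{\max}(A)$) by $s^m$; and the invariance axiom reduces to $(a/s^n)(\zeta/s^m)\overline{(a/s^n)} = a\zeta\bar a/s^{2n+m}$ together with $a\zeta\bar a \in \Lambda$.

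The maximal localisation case $(A_\gm,\Lambda_\gm)$ is handled by transcribing the same argument verbatim, since every element of $S_\gm = R_0 \setminus \gm$ is central in $A$ and invariant under the involution, so the same formula $\overline{a/s} := \bar a/s$ defines an involution on $A_\gm$ by the same well-definedness check, and the three verifications above are textually identical. The only genuine obstacle in the entire proof is ensuring that the extended involution is well-defined on the localisation, and this is precisely the reason the development in \S\ref{quasi-finite} replaces $R$ by its subring $R_0$ of involution-fixed central elements. Once that point is settled, all remaining verifications are direct translations of identities in $(A,\Lambda)$ to identities in the localised pair.
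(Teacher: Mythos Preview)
Your proposal is correct and is precisely the ``straightforward computation'' that the paper alludes to but does not spell out. The key observation you identify --- that elements of $R_0$ are central and involution-invariant, so the involution extends well-definedly to the localisation --- is exactly the point of introducing $R_0$ in \S\ref{quasi-finite}, and the remaining checks are routine as you indicate.
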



\section{Unitary groups}\label{sec3}

In the present section we recall basic notation and facts related to
Bak's generalised unitary groups and their elementary subgroups.


\subsection{}\Label{general}
Let, as above, $A$ be an associative ring with 1. For natural $m,n$
we denote by $M(m,n,A)$ the additive group of $m\times n$ matrices
with entries in $A$. In particular $M(m,A)=M(m,m,A)$ is the ring of
matrices of degree $m$ over $A$. For a matrix $x\in M(m,n,A)$ we
denote by $x_{ij}$, $1\le i\le m$, $1\le j\le n$, its entry in the
position $(i,j)$. Let $e$ be the identity matrix and $e_{ij}$,
$1\le i,j\le m$, be a standard matrix unit, i.e.\ the matrix which has
1 in the position $(i,j)$ and zeros elsewhere.
\par
As usual, $\GL(m,A)=M(m,A)^*$ denotes the general linear group
of degree $m$ over $A$. The group $\GL(m,A)$ acts on the free right
$A$-module $V\cong A^{m}$ of rank $m$. Fix a base $e_1,\ldots,e_{m}$
of the module $V$. We may think of elements $v\in V$ as columns with
components in $A$. In particular, $e_i$ is the column whose $i$-th
coordinate is 1, while all other coordinates are zeros.
\par
Actually, in the present paper we are only interested in the case,
when $m=2n$ is even. We usually number the base
as follows: $e_1,\ldots,e_n,e_{-n},\ldots,e_{-1}$. All other
occuring geometric objects will be numbered accordingly. Thus,
we write $v=(v_1,\ldots,v_n,v_{-n},\ldots,v_{-1})^t$, where $v_i\in A$,
for vectors in $V\cong A^{2n}$.
\par
The set of indices will be always ordered accordingly,
$\Omega=\{1,\ldots,n,-n,\ldots,-1\}$. Clearly, $\Omega=\Omega^+\sqcup\Omega^-$,
where $\Omega^+=\{1,\ldots,n\}$ and $\Omega^-=\{-n,\ldots,-1\}$. For an
element $i\in\Omega$ we denote by $\e(i)$ the sign of $\Omega$, i.e.\
$\e(i)=+1$ if $i\in\Omega^+$, and $\e(i)=-1$ if $i\in\Omega^-$.

\subsection{}\Label{unitary} For a form ring $(\FormR)$, one considers the
{\it hyperbolic unitary group\/} $\GU(2n,\FormR)$, see~\cite[\S2]{BV3}.
This group is defined as follows:
\par
One fixes a symmetry $\lambda\in\Cent(A)$, $\lambda\bar\lambda=1$ and
supplies the module $V=A^{2n}$ with the following $\lambda$-hermitian form
$h:V\times V\map A$,
$$ h(u,v)=\bar u_1v_{-1}+\ldots+\bar u_nv_{-n}+
\lambda\bar u_{-n}v_n+\ldots+\lambda\bar u_{-1}v_1. $$
\noindent
and the following $\Lambda$-quadratic form $q:V\map A/\Lambda$,
$$ q(u)=\bar u_1 u_{-1}+\ldots +\bar u_n u_{-n} \mod\Lambda. $$
\noindent
In fact, both forms are engendered by a sesquilinear form $f$,
$$ f(u,v)=\bar u_1 v_{-1}+\ldots +\bar u_n v_{-n}. $$
\noindent
Now, $h=f+\lambda\bar{f}$, where $\bar f(u,v)=\bar{f(v,u)}$, and
$q(v)=f(u,u)\mod\Lambda$.
\par
By definition, the hyperbolic unitary group $\GU(2n,A,\Lambda)$ consists
of all elements from $\GL(V)\cong\GL(2n,A)$ preserving the $\lambda$-hermitian
form $h$ and the $\Lambda$-quadratic form $q$. In other words, $g\in\GL(2n,A)$
belongs to $\GU(2n,A,\Lambda)$ if and only if
$$ h(gu,gv)=h(u,v)\quad\text{and}\quad q(gu)=q(u),\qquad\text{for all}\quad u,v\in V. $$
\par
When the form parameter is not maximal or minimal, these groups are not
algebraic. However, their internal structure is very similar to that
of the usual classical groups. They are also oftentimes called general
quadratic groups, or classical-like groups.

\subsection{}\Label{elementary1}
{\it Elementary unitary transvections\/} $T_{ij}(\xi)$
correspond to the pairs $i,j\in\Omega$ such that $i\neq j$. They come in
two stocks. Namely, if, moreover, $i\neq-j$, then for any $\xi\in A$ we set
$$ T_{ij}(\xi)=e+\xi e_{ij}-\lambda^{(\e(j)-\e(i))/2}\bar\xi e_{-j,-i}. $$
\noindent
These elements are also often called {\it elementary short root unipotents\/}.
\noindent
On the other side for $j=-i$ and $\a\in\lambda^{-(\e(i)+1)/2}\Lambda$ we set
$$ T_{i,-i}(\a)=e+\a e_{i,-i}. $$
\noindent
These elements are also often called {\it elementary long root elements\/}.
\par
Note that $\bar\Lambda=\bar\lambda\Lambda$. In fact, for any element
$\a\in\Lambda$ one has $\bar\a=-\bar\lambda\a$ and thus $\bar\Lambda$ coincides with
the set of products $\bar\lambda\a$, $\a\in\Lambda$. This means that in the
above definition $\a\in\bar\Lambda$ when $i\in\Omega^+$ and $\a\in\Lambda$
when $i\in\Omega^-$.
\par
Subgroups $X_{ij}=\{T_{ij}(\xi)\mid \xi\in A\}$, where $i\neq\pm j$, are
called {\it short root subgroups\/}. Clearly, $X_{ij}=X_{-j,-i}$.
Similarly, subgroups $X_{i,-i}=\{T_{ij}(\a)\mid
\a\in\lambda^{-(\e(i)+1)/2}\Lambda\}$ are called {\it long root subgroups\/}.
\par
The {\it elementary unitary group\/} $\EU(2n,\FormR)$ is generated by
elementary unitary transvections $T_{ij}(\xi)$, $i\neq\pm j$, $\xi\in A$,
and $T_{i,-i}(\a)$, $\a\in\Lambda$, see~\cite[\S3]{BV3}.

\subsection{}\Label{elementary2}
Elementary unitary transvections $T_{ij}(\xi)$ satisfy the following
{\it elementary relations\/}, also known as {\it Steinberg relations\/}.
These relations will be used throughout this paper.
\par\smallskip
(R1) \ $T_{ij}(\xi)=T_{-j,-i}(-\lambda^{(\varepsilon(j)-\varepsilon (i))/2}\bar{\xi})$,
\par\smallskip
(R2) \ $T_{ij}(\xi)T_{ij}(\zeta)=T_{ij}(\xi+\zeta)$,
\par\smallskip
(R3) \ $[T_{ij}(\xi),T_{hk}(\zeta)]=e$, where $h\ne j,-i$ and $k\ne i,-j$,
\par\smallskip
(R4) \ $[T_{ij}(\xi),T_{jh}(\zeta)]=
T_{ih}(\xi\zeta)$, where $i,h\ne\pm j$ and $i\ne\pm h$,
\par\smallskip
(R5) \ $[T_{ij}(\xi),T_{j,-i}(\zeta)]=
T_{i,-i}(\xi\zeta-\lambda^{-\varepsilon(i)}\bar{\zeta}\bar{\xi})$, where $i\ne\pm j$,
\par\smallskip
(R6) \ $[T_{i,-i}(\alpha),T_{-i,j}(\xi)]=
T_{ij}(\alpha\xi)T_{-j,j}(-\lambda^{(\ep(j)-\ep(i))/2}\bar\xi\alpha\xi)$,
where $i\ne\pm j$.
\par\smallskip
Relation (R1) coordinates two natural parametrisations of the same short
root subgroup $X_{ij}=X_{-j,-i}$. Relation (R2) expresses additivity of
the natural parametrisations. All other relations are various instances
of the Chevalley commutator formula. Namely, (R3) corresponds to the
case, where the sum of two roots is not a root, whereas (R4), and (R5)
correspond to the case of two short roots, whose sum is a short root,
and a long root, respectively. Finally, (R6) is the Chevalley commutator
formula for the case of a long root and a short root, whose sum is a root.
Observe that any two long roots are either opposite, or orthogonal, so
that their sum is never a root.


\subsection{}\Label{sub:1.4}
Let $G$ be a group. For any $x,y\in G$, ${}^xy=xyx^{-1}$ and $y^x=x^{-1}yx$
denote the left conjugate and the right conjugate of $y$ by $x$,
respectively. As usual, $[x,y]=xyx^{-1}y^{-1}$ denotes the
left-normed commutator of $x$ and $y$. Throughout the present paper
we repeatedly use the following commutator identities:
\begin{itemize}
\item[(C1)] $[x,yz]=[x,y]\cdot {}^y[x,z]$,
\smallskip
\item[(C$1^+$)]
An easy induction, using identity (C1), shows that
$$\Bigg[x,\prod_{i=1}^k u_i\Bigg]=
\prod_{i=1}^k {}^{\prod_{j=1}^{i-1}u_j}[x,u_{i}], $$
\noindent
where by convention $\prod_{j=1}^0 u_j=1$,
\item[(C2)] $[xy,z]={}^x[y,z]\cdot [x,z]$,
\smallskip
\item[(C$2^+$)]
As in (C$1^+$), we have
$$\Bigg[\prod_{i=1}^k u_i,x\Bigg]=
\prod_{i=1}^k {}^{\prod_{j=1}^{k-i}u_j}[u_{k-i+1},x], $$
\smallskip
\item[(C3)]
${}^{x}\big[[x^{-1},y],z\big]\cdot {}^{z}\big[[z^{-1},x],y\big]\cdot
{}^{y}\big[[y^{-1},z],x\big]=1$,
\smallskip
\item[(C4)] $[x,{}^yz]={}^y[{}^{y^{-1}}x,z]$,
\smallskip
\item[(C5)] $[{}^yx,z]={}^{y}[x,{}^{y^{-1}}z]$,
\smallskip
\item[(C6)] If $H$ and $K$ are subgroups of $G$, then $[H,K]=[K,H]$,
\end{itemize}
Especially important is (C3), the celebrated {\it Hall--Witt
identity\/}. Sometimes it is used in the following form, known as
the {\it three subgroup lemma\/}.
\begin{Lem}{\label{HW1}}
Let\/ $F,H,L\trianglelefteq G$ be three normal subgroups
of\/ $G$. Then
$$ \big[[F,H],L\big]\le \big [[F,L],H\big ]\cdot \big [F,[H,L]\big ]. $$
\end{Lem}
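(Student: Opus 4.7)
The plan is to deduce the three subgroup lemma directly from the Hall--Witt identity~(C3), leveraging the fact that $F,H,L$ are all normal in $G$. First, I would take arbitrary elements $f\in F$, $h\in H$, $l\in L$ and substitute $x=f^{-1}$, $y=h$, $z=l$ into~(C3). The first factor then becomes ${}^{f^{-1}}\bigl[[f,h],l\bigr]$, while the remaining two factors, namely ${}^l\bigl[[l^{-1},f^{-1}],h\bigr]$ and ${}^h\bigl[[h^{-1},l],f^{-1}\bigr]$, patently lie in $\bigl[[F,L],H\bigr]$ and $\bigl[[H,L],F\bigr]=\bigl[F,[H,L]\bigr]$ respectively, the last equality being~(C6). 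Solving the Hall--Witt relation for the first factor yields
$$\bigl[[f,h],l\bigr]\;=\;{}^{f}\Bigl({}^l\bigl[[l^{-1},f^{-1}],h\bigr]\cdot{}^h\bigl[[h^{-1},l],f^{-1}\bigr]\Bigr)^{-1}.$$

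Second, I would observe that since $F,H,L$ are normal in $G$, so are the commutator subgroups $[F,L]$ and $[H,L]$, and hence so are $\bigl[[F,L],H\bigr]$ and $\bigl[F,[H,L]\bigr]$ (commutators of normal subgroups are normal). Consequently the set-theoretic product $N:=\bigl[[F,L],H\bigr]\cdot\bigl[F,[H,L]\bigr]$ is itself a normal subgroup of $G$. This means that the outer conjugations by $f,h,l\in G$ appearing in the displayed expression do not take us out of $N$, so $\bigl[[f,h],l\bigr]\in N$ for every triple of generators.

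Third, I would upgrade this generator-level containment to the full subgroup $\bigl[[F,H],L\bigr]$. A general element of $[F,H]$ has the shape $u=\prod_i[f_i,h_i]^{\varepsilon_i}$; applying~(C$2^+$) to $[u,l]$ expresses it as a product of conjugates of commutators of the form $\bigl[[f_i,h_i]^{\pm1},l\bigr]$. For the inverse case one uses $[w^{-1},l]={}^{w^{-1}}[w,l]^{-1}$ to reduce to the generator case, which lies in $N$ by the previous step, and normality of $N$ absorbs the outer conjugations. A second application of~(C$1^+$) handles products over $l$ as well, giving $\bigl[[F,H],L\bigr]\le N$ as desired.

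The only real obstacle is the bookkeeping in the first step: one must pick the substitution into~(C3) so that exactly one of its three factors is (a conjugate of) $[[f,h],l]$, while the other two automatically fall into $[[F,L],H]$ and $[F,[H,L]]$. Once this is arranged, normality disposes of everything else essentially for free, and no deeper structural input is required.
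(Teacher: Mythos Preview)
Your proof is correct and is the standard derivation of the three subgroup lemma from the Hall--Witt identity. The paper does not actually prove Lemma~\ref{HW1}: it merely states it as a well-known reformulation of~(C3), so there is nothing to compare against beyond noting that your argument is exactly the intended one.
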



\section{Relative subgroups}\label{sec4}

In this section we recall definitions and basic facts concerning relative
subgroups.

\subsection{}\Label{relative} One associates with a form ideal $(I,\Gamma)$
the following four relative subgroups.
\par\smallskip
$\bullet$ The subgroup $\FU(2n,I,\Gamma)$ generated by elementary unitary
transvections of level $(I,\Gamma)$,
$$ \FU(2n,I,\Ga)=\big\langle T_{ij}(\xi)\mid \
\xi\in I\text{ if }i\neq\pm j\text{ and }
\xi\in\lambda^{-(\epsilon(i)+1)/2}\Gamma\text{ if }i=-j\big\rangle. $$
\par\smallskip
$\bullet$ The {\it relative elementary subgroup\/} $\EU(2n,I,\Gamma)$
of level $(I,\Gamma)$, defined as the normal closure of $\FU(2n,I,\Gamma)$
in $\EU(2n,\FormR)$,
$$ \EU(2n,I,\Ga)={\FU(2n,I,\Ga)}^{\EU(2n,\FormR)}. $$
\par\smallskip
$\bullet$ The {\it principal congruence subgroup\/} $\GU(2n,I,\Ga)$ of level
$(I,\Ga)$ in $\GU(2n,A,\Lambda)$ consists of those $g\in \GU(2n,A,\Lambda)$,
which are congruent to $e$ modulo $I$ and preserve $f(u,u)$ modulo $\Ga$,
$$ f(gu,gu)\in f(u,u)+\Ga, \qquad u\in V. $$
\par\smallskip
$\bullet$ The full congruence subgroup $\CU(2n,I,\Gamma)$ of level
$(I,\Gamma)$, defined as
$$ \CU(2n,I,\Ga)=\big\{ g\in \GU(2n,A,\Lambda) \mid
[g,\GU(2n,A,\Lambda)]\subseteq \GU(2n,I,\Ga)\big\}. $$
\par\smallskip
In some books, including \cite{HO}, the group $\CU(2n,I,\Ga)$
is defined differently. However, in many important situations
these definitions yield the same group.


\subsection{}\Label{relativefacts}
Let us collect several basic facts, concerning relative groups,
which will be used in the sequel. The first one of them asserts that
the relative elementary groups are $\EU(2n,A,\Lambda)$-perfect.
\begin{Lem}\label{hww3}
Suppose either $n\ge 3$ or $n=2$ and $I=\Lambda I+I\Lambda$.
Then
$$ \EU(2n,I,\Gamma)=[\EU(2n,I,\Gamma),\EU(2n,A,\Lambda)]. $$
\end{Lem}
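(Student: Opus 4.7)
The inclusion $[\EU(2n,I,\Gamma),\EU(2n,\FormR)]\subseteq\EU(2n,I,\Gamma)$ is immediate, because $\EU(2n,I,\Gamma)$ is by construction the $\EU(2n,\FormR)$-normal closure of $\FU(2n,I,\Gamma)$ and hence is normalized by $\EU(2n,\FormR)$, so the commutators in question land back inside it. My plan is to prove the non-trivial reverse inclusion $\EU(2n,I,\Gamma)\subseteq H$, where I set $H:=[\EU(2n,I,\Gamma),\EU(2n,\FormR)]$. Since $\EU(2n,I,\Gamma)$ is normal in $\EU(2n,\FormR)$, the subgroup $H$ is also normal in $\EU(2n,\FormR)$. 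Thus it absorbs the $\EU(2n,\FormR)$-normal closure of any subset it contains, and it suffices to verify that $\FU(2n,I,\Gamma)\subseteq H$, i.e.\ that every short-root and every long-root elementary generator of $\FU(2n,I,\Gamma)$ lies in $H$.

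For a short-root generator $T_{ih}(\xi)$, $\xi\in I$, $i\neq\pm h$, I would use the hypothesis $n\ge 3$ to choose $j\in\Omega$ with $j\ne\pm i,\pm h$, and invoke the Steinberg relation (R4):
$$T_{ih}(\xi)=[T_{ij}(\xi),T_{jh}(1)],$$
where $T_{ij}(\xi)\in\FU(2n,I,\Gamma)\subseteq\EU(2n,I,\Gamma)$ and $T_{jh}(1)\in\EU(2n,\FormR)$. Thus $T_{ih}(\xi)\in H$. This handles all short-root generators of level $(I,\Gamma)$ in one stroke.

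For a long-root generator $T_{i,-i}(\alpha)$ with $\alpha$ in the appropriate $\lambda$-twist of $\Gamma$, I would pick $j\ne\pm i$ and apply the Steinberg relation (R6):
$$[T_{i,-i}(\alpha),T_{-i,j}(1)]=T_{ij}(\alpha)\cdot T_{-j,j}\bigl(-\lambda^{(\epsilon(j)-\epsilon(i))/2}\alpha\bigr).$$
The left-hand side lies in $H$ because $T_{i,-i}(\alpha)\in\FU(2n,I,\Gamma)$ and $T_{-i,j}(1)\in\EU(2n,\FormR)$. The first factor on the right is a short-root element with parameter $\alpha\in\Gamma\subseteq I$, hence is already in $H$ by the preceding paragraph. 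Solving, we get the second long-root factor in $H$; varying the indices $i,j$ and appealing to (R1) to translate between the two natural parametrisations of a long-root subgroup, this produces every long-root generator of $\FU(2n,I,\Gamma)$. Together with the short-root case, this yields $\FU(2n,I,\Gamma)\subseteq H$, and then normality of $H$ gives $\EU(2n,I,\Gamma)\subseteq H$.

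The main obstacle is the accurate bookkeeping for long roots: one must check that as $(i,j,\alpha)$ run through the admissible ranges, the parameters $-\lambda^{(\epsilon(j)-\epsilon(i))/2}\alpha$ sweep out the entire admissible parameter set $\lambda^{-(\epsilon(-j)+1)/2}\Gamma$ for $T_{-j,j}$, and that every generator of $\Gamma$ (including those arising from the $\xi\alpha\bar\xi$-conjugation part of $\Gamma_{\min}(I)$ and from the $A$-closure condition on $\Gamma$) is actually reached; if not reached directly by (R6), the remaining long-root parameters can be obtained by further conjugation inside $\EU(2n,\FormR)$, using that $H$ is normal. The exclusion of the $n=2$ case precisely reflects the failure of the third-index trick, which is why the alternative hypothesis $I=\Lambda I+I\Lambda$ must then be imposed to recover enough room via (R5)–(R6).
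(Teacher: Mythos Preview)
The paper does not supply a proof of this lemma; it is listed among the ``basic facts concerning relative groups'' in \S\ref{relativefacts} and is taken over from the earlier literature (essentially \cite{BV3}). So there is no in-paper argument to compare against.

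Your overall strategy is the standard one and is sound: reduce to showing $\FU(2n,I,\Gamma)\subseteq H$ via normality of $H$, and then exhibit each elementary generator as a commutator. The short-root case via (R4) with a third index is clean and complete for $n\ge 3$.

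The long-root case, however, is not fully handled by what you wrote. Your application of (R6) with $\xi=1$ shows that $T_{-j,j}\bigl(-\lambda^{(\ep(j)-\ep(i))/2}\alpha\bigr)\in H$, but as $i$ varies over indices with $i\ne\pm j$ and $\alpha$ over the parameter set $\lambda^{-(\ep(i)+1)/2}\Gamma$ of $T_{i,-i}$, the resulting parameters do not in general exhaust the parameter set $\lambda^{(\ep(j)-1)/2}\Gamma$ of $T_{-j,j}$: what one obtains is $\lambda^{\pm 1}$ times the target set, and $\lambda\Gamma+\bar\lambda\Gamma$ need not equal $\Gamma$. Your fallback, ``further conjugation inside $\EU(2n,\FormR)$ using that $H$ is normal'', does not obviously close this gap either, since conjugating a long-root element by a generic elementary matrix does not return a long-root element with a shifted parameter. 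A complete proof (as in \cite{BV3}) requires a more careful combination of (R5) and (R6), together with (R1), tracking the $\lambda$-twists precisely --- exactly the ``accurate bookkeeping'' you correctly flag as the main obstacle but leave unverified.
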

The next lemma gives generators of the relative elementary subgroup
$\EU(2n,I,\Ga)$ as a subgroup. With this end, consider matrices
$$ Z_{ij}(\xi,\zeta)={}^{T_{ji}(\zeta)}T_{ij}(\xi)
=T_{ji}(\zeta)T_{ij}(\xi)T_{ji}(-\zeta), $$
\noindent
where $\xi\in I$, $\zeta\in A$, if $i\neq\pm j$, and
$\xi\in\lambda^{-(\e(i)+1)/2}\Gamma$,
$\zeta\in\lambda^{-(\e(j)+1)/2}\Lambda$, if $i=-j$.
The following result is \cite{BV3}, Proposition 5.1.
\begin{Lem}\label{genelm}
Suppose $n\ge 3$. Then
\begin{multline*}
\EU(2n,I,\Ga)=\big\langle Z_{ij}(\xi,\zeta)\mid \
\xi\in I,\zeta\in A\text{ if }i\neq\pm j\text{ and }\\
\xi\in\lambda^{-(\epsilon(i)+1)/2}\Gamma,
\zeta\in\lambda^{-(\epsilon(j)+1)/2}\Lambda,
\text{ if }i=-j\big\rangle.
\end{multline*}
\end{Lem}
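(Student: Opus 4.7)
The inclusion $\langle Z_{ij}(\xi,\zeta)\rangle \subseteq \EU(2n,I,\Gamma)$ is immediate: each $Z_{ij}(\xi,\zeta)={}^{T_{ji}(\zeta)}T_{ij}(\xi)$ is a conjugate of an $\FU(2n,I,\Gamma)$-generator by an element of $\EU(2n,A,\Lambda)$, hence lies in the normal closure.

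For the reverse inclusion, let $H$ denote the subgroup generated, as an abstract (unnormalised) subgroup, by all the $Z_{ij}(\xi,\zeta)$. Taking $\zeta=0$ gives $Z_{ij}(\xi,0)=T_{ij}(\xi)$, so $H\supseteq\FU(2n,I,\Gamma)$. Since $\EU(2n,I,\Gamma)$ is by definition the normal closure of $\FU(2n,I,\Gamma)$ in $\EU(2n,A,\Lambda)$, it is enough to prove that $H$ is normalised by $\EU(2n,A,\Lambda)$; and since the latter is generated by elementary transvections $T_{kl}(\eta)$, it suffices to verify that ${}^{T_{kl}(\eta)}Z_{ij}(\xi,\zeta)\in H$ for every choice of indices and parameters.

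The central calculation is to expand, using the identity $Z_{ij}(\xi,\zeta)=T_{ji}(\zeta)T_{ij}(\xi)T_{ji}(-\zeta)$,
\[
{}^{T_{kl}(\eta)}Z_{ij}(\xi,\zeta)=\bigl({}^{T_{kl}(\eta)}T_{ji}(\zeta)\bigr)\cdot\bigl({}^{T_{kl}(\eta)}T_{ij}(\xi)\bigr)\cdot\bigl({}^{T_{kl}(\eta)}T_{ji}(-\zeta)\bigr),
\]
and to evaluate each factor ${}^{T_{kl}(\eta)}T_{pq}(\mu)=[T_{kl}(\eta),T_{pq}(\mu)]\cdot T_{pq}(\mu)$ by the Chevalley/Steinberg relations (R3)--(R6). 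The resulting expression can then be reorganised, using (C1) and (C4), into a product of elements of the form ${}^{T_{i'j'}(\alpha)}T_{i''j''}(\xi'')$, each of which is either a $Z$-generator of $H$ (short-root case: $\xi''$ is a product involving $\xi$, hence lies in $I$) or a long-root $T_{i',-i'}(\beta)$ whose coefficient $\beta$ lies in the appropriate translate of $\Gamma$. The verification splits into cases according to the overlap pattern of $\{\pm i,\pm j\}$ and $\{\pm k,\pm l\}$: when the sets are essentially disjoint, (R3) makes every commutator trivial and the conjugate collapses back to $Z_{ij}(\xi,\zeta)$; a single index coincidence triggers (R4) and introduces a short-root transvection with coefficient in $I$; finally, the long-root relations (R5) and (R6) contribute the remaining cases.

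The main technical obstacle lies in this long-root bookkeeping. One must verify that coefficients arising from (R5), typically of the shape $\xi\zeta-\lambda^{-\e(i)}\bar\zeta\bar\xi$, actually belong to $\Gamma$; this follows from the closure axioms $\Gamma_{\min}(I)\subseteq\Gamma$ and $\alpha\Gamma\bar\alpha\subseteq\Gamma$. When a produced long-root transvection $T_{p,-p}(\beta)$ is not manifestly a $Z$-generator, one eliminates it by rewriting it via (R6) as a product of conjugates of short-root transvections sharing an auxiliary index $h\notin\{\pm i,\pm j,\pm k,\pm l\}$. This is exactly where the hypothesis $n\geq 3$ is used: it guarantees the existence of such an $h$, and is the single place where the rank assumption enters. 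Once this bookkeeping is completed in every subcase, $H$ is normal in $\EU(2n,A,\Lambda)$, hence contains $\EU(2n,I,\Gamma)$, and the two subgroups coincide.
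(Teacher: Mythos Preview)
The paper does not supply its own proof of this lemma; it is quoted as \cite{BV3}, Proposition~5.1. Your outline is precisely the standard argument used there: one shows that the subgroup $H$ generated by all $Z_{ij}(\xi,\zeta)$ contains $\FU(2n,I,\Gamma)$ (take $\zeta=0$) and is normalised by every elementary transvection $T_{kl}(\eta)$, via a case analysis based on the Steinberg relations (R1)--(R6). So your approach is correct and coincides with the cited source.

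One minor remark on your description of the role of $n\ge 3$. The long-root coefficients arising from (R5) and (R6) in fact always land in the appropriate translate of $\Gamma$, thanks to the closure axioms $\Gamma_{\min}(I)\subseteq\Gamma$ and $\alpha\Gamma\bar\alpha\subseteq\Gamma$; so such a $T_{p,-p}(\beta)$ \emph{is} already a $Z$-generator $Z_{p,-p}(\beta,0)$ and need not be ``repaired''. The auxiliary index $h\notin\{\pm i,\pm j,\pm k,\pm l\}$ is instead used when one must decompose a short-root transvection $T_{ij}(\xi)$ as $[T_{ih}(1),T_{hj}(\xi)]$ (or a long-root element via (R6)) in order to separate factors that would otherwise interfere, typically in the cases where $\{k,l\}\subseteq\{\pm i,\pm j\}$ and the commutator relations alone do not yield a clean product of $Z$-generators. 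This is a cosmetic point and does not affect the validity of your plan.
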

The following lemma was first established in~\cite{B1}, but remained
unpublished. See~\cite{HO} and~\cite{BV3}, Lemma 4.4, for published
proofs.
\begin{Lem}
The groups $\GU(2n,I,\Gamma)$ and $\CU(2n,I,\Gamma)$ are normal in
$\GU(2n,A,\Lambda)$.
\end{Lem}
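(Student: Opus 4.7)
The lemma asserts two normality statements, which I would prove in two stages: first $\GU(2n,I,\Gamma)$, then deduce $\CU(2n,I,\Gamma)$ as a formal consequence.

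\textbf{Stage 1: $\GU(2n,I,\Gamma)\triangleleft\GU(2n,A,\Lambda)$.} Fix $g\in\GU(2n,I,\Gamma)$ and $h\in\GU(2n,A,\Lambda)$, and set $g'=hgh^{-1}$. Two conditions need to be verified. First, the matrix congruence: writing $g=e+x$ with $x\in M(2n,I)$, we have $g'=e+hxh^{-1}$, and since $I$ is a two-sided ideal of $A$ the entries of $hxh^{-1}$ again lie in $I$. This part is entirely formal.

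The second, genuinely substantive, condition is that $f(g'u,g'u)-f(u,u)\in\Gamma$ for every $u\in V$. Substituting $u=hv$ reduces this to showing $f(hgv,hgv)-f(hv,hv)\in\Gamma$. Expanding with $d=(g-e)v\in IV$, the left-hand side becomes
\[
f(hv,hd)+f(hd,hv)+f(hd,hd).
\]
Here I would combine two invariance identities: (i) preservation of the hermitian form, $f(hu_1,hu_2)+\lambda\overline{f(hu_2,hu_1)}=f(u_1,u_2)+\lambda\overline{f(u_2,u_1)}$, which couples the two cross terms to their counterparts evaluated at $v,d$; and (ii) preservation of the quadratic form modulo $\Lambda$, giving $f(hd,hd)=f(d,d)+\mu$ for some $\mu\in\Lambda$. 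This rewrites the displayed expression as the analogous quantity for $g$ acting on $v$, which lies in $\Gamma$ by hypothesis, plus correction terms. The correction terms involve conjugations of entries of $d$ (which are in $I$) by components of $h$, and these are brought into $\Gamma$ by the defining axiom $\alpha\Gamma\bar\alpha\subseteq\Gamma$ of the form parameter (together with $\Gamma_{\min}(I)\subseteq\Gamma$ to absorb residual $\xi-\lambda\bar\xi$ contributions).

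\textbf{Stage 2: $\CU(2n,I,\Gamma)\triangleleft\GU(2n,A,\Lambda)$.} This follows formally once Stage 1 is in hand. For $g\in\CU(2n,I,\Gamma)$ and $h\in\GU(2n,A,\Lambda)$, pick an arbitrary $k\in\GU(2n,A,\Lambda)$ and use the identity
\[
[hgh^{-1},k]=h\,[g,h^{-1}kh]\,h^{-1}.
\]
Since $h^{-1}kh\in\GU(2n,A,\Lambda)$, the definition of $\CU$ applied to $g$ gives $[g,h^{-1}kh]\in\GU(2n,I,\Gamma)$, and Stage 1 then places the $h$-conjugate back into $\GU(2n,I,\Gamma)$. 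As $k$ was arbitrary, $hgh^{-1}\in\CU(2n,I,\Gamma)$.

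\textbf{Main obstacle.} Stage 1(b) is the only genuine piece of work: the matrix condition is trivial and Stage 2 is a one-line commutator manipulation. The delicate point in Stage 1(b) is that the $q$-invariance of $h$ is only an invariance \emph{modulo} $\Lambda$, whereas we need control \emph{modulo} $\Gamma\subseteq\Lambda$. Thus the $\Lambda$-valued discrepancy produced by passing $h$ across the quadratic form has to be shown to actually land in $\Gamma$, and this is precisely where the axiom $\alpha\Gamma\bar\alpha\subseteq\Gamma$ together with $\Gamma_{\min}(I)\subseteq\Gamma$ is indispensable; any proof avoiding a careful bookkeeping of these terms runs the risk of a gap of size $\Lambda/\Gamma$.
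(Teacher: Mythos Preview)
The paper does not supply its own proof of this lemma; it records only that the result goes back to Bak's thesis \cite{B1} and refers to \cite{HO} and \cite[Lemma~4.4]{BV3} for published arguments. So there is no in-paper proof to compare against; your proposal is a direct verification along the lines of those references.

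Your Stage~2 is correct and entirely formal: once $\GU(2n,I,\Gamma)$ is known to be normal, the normaliser-type subgroup $\CU(2n,I,\Gamma)$ is normal by the one-line conjugation identity you give. Stage~1(a) is likewise trivial.

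Stage~1(b) is the real content, and your outline is sound, though a little imprecise about which axiom does the work. Writing $d=(g-e)v\in IV$ and $a=f(hv,hd)$, $b=f(hd,hv)$, $c=f(hd,hd)$ (and primed versions without $h$), the hermitian invariance of $h$ gives $a-a'=-\lambda\overline{(b-b')}$, so the cross-term contribution $(a-a')+(b-b')=(b-b')-\lambda\overline{(b-b')}$ lies in $\Gamma_{\min}(I)$ because $b-b'\in I$. For the remaining piece $c-c'=f(hd,hd)-f(d,d)$, expanding in the entries of $h$ and using that $H_{jj}:=f(he_j,he_j)\in\Lambda$ (the quadratic condition on $h$) together with the hermitian relations $H_{jk}+\lambda\bar H_{kj}=h(e_j,e_k)$ shows every summand lands in $\Gamma_{\min}(I)$: the diagonal terms are of the shape $\xi\alpha\bar\xi$ with $\xi\in I$, $\alpha\in\Lambda$, and the off-diagonal pairs collapse to $\zeta-\lambda\bar\zeta$ with $\zeta\in I$.

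So in fact only the inclusion $\Gamma_{\min}(I)\subseteq\Gamma$ is used; the conjugation axiom $\alpha\Gamma\bar\alpha\subseteq\Gamma$ that you highlight is not actually invoked here. With that minor correction your sketch is complete and matches the standard argument.
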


Also, throughout the paper we use the absolute and the relative
standard commutator formulas, which were already stated in the
introduction as Theorem~\ref{main_BV} and Theorem~\ref{main_HVZ}.


\subsection{}\Label{sub:1.3}
The proofs in the present paper critically depend on the fact that
the functors $\GU_{2n}$ and $\EU_{2n}$ commute with direct limits.
This idea is used twice.
\par\smallskip
$\bullet$ Analysis of the quasi-finite case can be reduced to the case,
where $A$ is module finite over $R_0$, whereas $R_0$ itself is Noetherian.
Indeed, if $(\FormR)$ is quasi-finite, (see \S\ref{quasi-finite}),
it is a direct limit $\varinjlim\big((A_j)_{R_j},\Lambda_j\big)$ of
an inductive system of form sub-algebras
$\big((A_j)_{R_j},\Lambda_j\big)\subseteq(A_R,\Lambda)$
such that each $A_j$ is module finite over $R_j$, $R_0\subseteq R_j$ and $R_j$
is finitely generated as an $R_0$-module. It follows that $A_j$ is finitely
generated as an $R_0$-module, see \cite[Cor.~3.8]{RH}. This reduction to module
finite algebras will be used in Lemma~\ref{Lem:08} and Theorem~\ref{main_triple}.
\par\smallskip
$\bullet$ Analysis of any localisation can be reduced to the case of principal
localisations. Indeed, let $S$ be a multiplicative system in a commutative
ring $R$. Then $R_s$, $s\in S$, is an inductive system with respect to the
localisation maps $F_t:R_s\to R_{st}$. Thus, for any functor $\mathcal F$
commuting with direct limits one has
${\mathcal F}(S^{-1}R)=\varinjlim{\mathcal F}(R_s)$.
\par\smallskip
The following crucial lemma relies on both of these reductions. In fact,
starting from the next section, we will be mostly working in the principal
localisation $A_t$. However, eventually we shall have to return to the algebra
$A$ itself. In general, localisation homomorphism $F_S$ is not injective,
so we cannot pull elements of $\GU(2n,S^{-1}A,S^{-1}\Lambda)$ back to
$\GU(2n,A,\Lambda)$.
However, over a {\it Noetherian} ring, {\it principal\/} localisation homomorphisms
$F_t$ are indeed injective on small $t$-adic neighbourhoods of identity!

\begin{Lem}\Label{Lem:03}
Let $R$ be a commutative Noetherian ring and let $A$ be a module finite
$R$-algebra. Then for any $t\in R$ there exists a positive integer $l$
such that restriction
$$ F_t:\GU(2n,t^lA,t^l\Lambda)\to\GU(2n,A_t,\Lambda_t), $$
\noindent
of the localisation map to the principal congruence subgroup of level
$(t^lA,t^l\Lambda)$ is injective.
\end{Lem}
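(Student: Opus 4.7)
The plan is to control the kernel $K := \ker(F_t \colon A \to A_t)$, which consists precisely of the $t$-torsion in $A$, and then show that by taking $l$ large enough the intersection $t^l A \cap K$ becomes trivial. Since $A$ is module finite over the Noetherian ring $R$, it is itself a Noetherian $R$-module, so the ascending chain of submodules $\operatorname{Ann}_A(t) \subseteq \operatorname{Ann}_A(t^2) \subseteq \cdots$ stabilises. Hence there exists an integer $N_0 \geq 0$ with $K = \operatorname{Ann}_A(t^{N_0})$, and in particular $t^{N_0} K = 0$.

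Next I would invoke the Artin--Rees lemma applied to the principal ideal $(t) \subseteq R$, the finitely generated $R$-module $A$, and the submodule $K \subseteq A$: this produces a constant $c \geq 0$ such that for every $n \geq c$,
$$ t^n A \cap K \;=\; t^{n-c}\bigl(t^c A \cap K\bigr) \;\subseteq\; t^{n-c} K. $$
Now choose $l$ to be any positive integer with $l \geq N_0 + c$. Then
$$ t^l A \cap K \;\subseteq\; t^{N_0} K \;=\; 0. $$

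Finally, take any $g \in \GU(2n, t^l A, t^l \Lambda)$ with $F_t(g) = e$. By definition of the principal congruence subgroup, every entry of $g - e$ lies in $t^l A$; and from $F_t(g - e) = 0$, every entry also lies in $K$. By the previous step, these entries all vanish, so $g = e$. This gives injectivity of $F_t$ on $\GU(2n, t^l A, t^l \Lambda)$. Observe that the form parameter plays no role here: the argument is purely about matrix entries lying in the ideal $t^l A$, so the quadratic condition defining $\GU$ is only used to confirm $g - e$ has entries in $t^l A$.

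The main technical content is the Artin--Rees step, which is exactly what fails without the Noetherian hypothesis: in general the $t$-torsion could meet every $t^n A$ nontrivially, and no choice of $l$ would suffice. The reduction from the quasi-finite case to the module finite, Noetherian case (as outlined in \S\ref{sub:1.3}) is therefore essential before this lemma can be applied; once inside the Noetherian setting, Artin--Rees packages the obstruction cleanly.
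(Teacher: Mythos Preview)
Your argument is correct and follows the same underlying idea as the paper: both reduce the lemma to the fact that the localisation map $F_t\colon t^lA\to A_t$ is injective for sufficiently large $l$. The paper simply cites this as \cite[Lemma~4.10]{B4}, whereas you spell out the standard Artin--Rees argument (stabilise the $t$-torsion, then use Artin--Rees to separate $t^lA$ from it). So there is no genuine difference in strategy; you have supplied the content of the cited lemma rather than invoking it.
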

\begin{proof} The proof follows from the injectivity of the localisation map
$F_t:t^lA\rightarrow A_t$, see \cite[Lem\-ma~4.10]{B4}.
\end{proof}


\section{Levels of mixed commutator subgroups}\label{sec5}

In the present section we closely follow the notation and computations
of \cite{RNZ1}. For the proof of Theorem~\ref{main_triple}  it is absolutely vital to
improve the level calculations from \cite{RNZ1}, \S\ref{sec8}. Specifically,
here we amalgamate Lemmas~22 and 23 therefrom, and streamline their
proofs. As before, we assume that $(A,\Lambda)$ is a form ring over
a commutative ring $R$ with involution, $R_0$ is the subring of $R$,
generated by $a\bar a$, where $a\in R$, as in \S\ref{quasi-finite},
$(I,\Gamma)$ and $(J,\Delta)$ are two form ideals of $(\FormR)$ and,
finally, $n\ge 3$. In this setting, in \S\ref{sec2} we have defined the
symmetrised product of form parameters $\Gamma$ and $\Delta$ as
$$ \Gamma\circ\Delta={}^J\Gamma+{}^I\Delta+\Gamma_{\min}(IJ+JI), $$
\noindent
which is a relative form parameter of level $I\circ J=IJ+JI$. Notice
that for any form ideal $(I,\Gamma)$ of the form ring $(\FormR)$,
we have $\Gamma=\Gamma\circ\Lambda$.

\par
First, we recall the rough level calculation of mixed commutator
subgroups, which was essentially contained already in
\cite{Ha1, Ha2, RH, RH2} and reproduced in more details in \cite{RNZ1}.
The left inclusion in the following lemma is \cite{RNZ1}, Lemma 21,
while the right inclusion is \cite{RNZ1}, Lemma 23.

\begin{Lem}\Label{Lem:Habdank}
Let $(I,\Gamma)$ and $(J,\Delta)$ be two form ideals of a form ring
$(\FormR)$. Then
\begin{multline*}
\EU(2n,(I,\Gamma)\circ(J,\Delta))
\subseteq \big [\FU(2n,I,\Gamma),\FU(2n,J,\Delta)\big]\\
\subseteq \big[\EU(2n,I,\Gamma),\EU(2n,J,\Delta)\big]
\subseteq \GU(2n,(I,\Gamma)\circ(J,\Delta)).
\end{multline*}
\end{Lem}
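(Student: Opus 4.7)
The middle inclusion is immediate from $\FU(2n,-,-)\subseteq\EU(2n,-,-)$, so all the content lies in the two outer inclusions, which are precisely Lemmas~21 and~23 of \cite{RNZ1}. My plan is to reprove them together, amalgamating and streamlining the arguments.

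For the leftmost inclusion, the target $\EU(2n,(I,\Gamma)\circ(J,\Delta))$ is the $\EU(2n,\FormR)$-normal closure of the elementary transvections $T_{ij}(\xi)$ with $\xi\in IJ+JI$ and $T_{i,-i}(\alpha)$ with $\alpha\in\lambda^{-(\epsilon(i)+1)/2}\bigl({}^J\Gamma+{}^I\Delta+\Gamma_{\min}(I\circ J)\bigr)$. I would realise these generators as Steinberg commutators of elements of $\FU(2n,I,\Gamma)$ with elements of $\FU(2n,J,\Delta)$. Relation (R4) gives the short-root transvections $T_{ih}(\xi\zeta)$ with $\xi\in I$, $\zeta\in J$, hence all of $T_{ih}(IJ+JI)$ after swapping roles. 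Relation (R6), applied with $\alpha\in\Gamma$ and $\xi\in J$, produces $T_{ij}(\alpha\xi)T_{-j,j}(-\lambda^{(\epsilon(j)-\epsilon(i))/2}\bar\xi\alpha\xi)$, so after dividing out the short-root factor just obtained we recover long-root transvections of argument $\bar\xi\Gamma\xi$, generating ${}^J\Gamma$; the symmetric use yields ${}^I\Delta$. Relation (R5) supplies the $\eta-\lambda\bar\eta$-contributions to $\Gamma_{\min}(I\circ J)$, while the $\eta\alpha\bar\eta$-type generators are obtained by conjugating the outputs of (R6) inside the commutator. To pass from single commutators to the full $\EU(2n,\FormR)$-conjugates $Z_{ij}(\xi,\zeta)$ demanded by Lemma~\ref{genelm}, I would use ${}^g[x,y]=[{}^gx,{}^gy]$ together with the Chevalley commutator formula and $n\ge 3$ to rewrite ${}^{T_{ji}(\zeta)}T_{ij}(\xi)$ as a product of commutators of elements of $\FU(2n,I,\Gamma)$ and $\FU(2n,J,\Delta)$.

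For the rightmost inclusion, write $g=e+a\in\EU(2n,I,\Gamma)$ and $h=e+b\in\EU(2n,J,\Delta)$ with $a\in M(2n,I)$ and $b\in M(2n,J)$. A routine expansion of $[g,h]=ghg^{-1}h^{-1}$ gives $[g,h]\equiv e$ modulo entries in $IJ+JI$, so the congruence-level part of the definition of $\GU(2n,(I,\Gamma)\circ(J,\Delta))$ holds. The substantive point is the form-parameter condition $f([g,h]u,[g,h]u)\equiv f(u,u)\pmod{\Gamma\circ\Delta}$, which I would prove by applying the defining congruences $f(gv,gv)\equiv f(v,v)\pmod\Gamma$ and $f(hw,hw)\equiv f(w,w)\pmod\Delta$ successively along the chain $u\mapsto h^{-1}u\mapsto g^{-1}h^{-1}u\mapsto hg^{-1}h^{-1}u\mapsto[g,h]u$, collecting the correction terms into the three summands ${}^J\Gamma$, ${}^I\Delta$ and $\Gamma_{\min}(I\circ J)$ of $\Gamma\circ\Delta$, using the stability conditions $\alpha\Gamma\bar\alpha\subseteq\Gamma$ and its analogues for $\Lambda$ and $\Delta$.

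The main obstacle will be the form-parameter book-keeping in the right inclusion: ordering the substitutions and invoking the involution so that each cross term lands in the intended summand of $\Gamma\circ\Delta$ requires some care, and this is precisely where \cite[Lemma~23]{RNZ1} does its real work. The left inclusion is technically easier, but the $\Gamma_{\min}(I\circ J)$ generators, which do not arise from a single Steinberg commutator, still require a small separate argument.
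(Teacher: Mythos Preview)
The paper does not prove this lemma; it simply cites \cite{RNZ1}, Lemmas~21 and~23. Your identification of these sources is correct, and your sketch of the left inclusion is sound: Steinberg relations (R4)--(R6) produce the required generators of $\FU(2n,(I,\Gamma)\circ(J,\Delta))$, and because conjugation by a single $T_{ji}(\zeta)$ sends each $T_{ik}(a)$, $T_{kj}(b)$ (with $k\ne\pm i,\pm j$) to a product of elementary transvections whose parameters remain in $I$, respectively $J$, the passage to the $Z_{ij}$-generators of Lemma~\ref{genelm} goes through as you indicate.

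The right inclusion is where your plan has a genuine gap. The successive-substitution chain $u\mapsto h^{-1}u\mapsto g^{-1}h^{-1}u\mapsto\cdots$ only yields
\[
f([g,h]u,[g,h]u)-f(u,u)\in\Gamma+\Delta,
\]
since at each step you pick up a correction in $\Gamma$ or in $\Delta$ separately; nothing in the chain forces these four corrections to combine into $\Gamma\circ\Delta={}^J\Gamma+{}^I\Delta+\Gamma_{\min}(I\circ J)$, which is in general much smaller than $\Gamma+\Delta$. The argument in \cite{RNZ1} (and its refinement in Lemma~\ref{Lem:EGG} of the present paper) proceeds differently: using (C1$^+$), (C2$^+$) and normality of $\GU(2n,(I,\Gamma)\circ(J,\Delta))$, one reduces to commutators $[T_{sk}(\alpha),h]$ with $T_{sk}(\alpha)\in\FU(2n,I,\Gamma)$ and $h\in\GU(2n,J,\Delta)$, and then verifies the form-parameter condition by an explicit entry-by-entry computation of $\sum_{1\le i\le n}\bar g_{ij}g_{-i,j}$, tracking which summand of $\Gamma\circ\Delta$ each term lands in. Your plan would need either this reduction-to-generators step, or an explicit cancellation argument showing why the four chain corrections combine as required; as written, the ``collecting the correction terms'' step is precisely the missing content.
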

Actually, \cite{RNZ1}, Lemma 23, asserted a bit more, namely that
one occurrence of the relative elementary subgroup can be
replaced by the corresponding principal congruence subgroup,
$$ \big[\EU(2n,I,\Gamma),\GU(2n,J,\Delta)\big]
\subseteq \GU(2n,(I,\Gamma)\circ(J,\Delta)). $$
\noindent
Does this inclusion hold when {\it both\/} relative elementary
subgroup are replaced by the corresponding principal congruence
subgroups? For all $n\ge 2$ Lemma 22 of \cite{RNZ1}, established
a similar, but weaker inclusion
$$ \big[\GU(2n,I,\Gamma),\GU(2n,J,\Delta)\big]
\subseteq \GU(2n,I\circ J,\Gamma_{\max}(I\circ J)), $$
\noindent
with the {\it maximal\/} relative form parameter of level $I\circ J$
on the right hand side, instead of the symmetrised product of the
relative form parameters. For $n\ge 3$ we can in fact merge these
results, but the argument is not straightforward, this is why
we missed it when writing \cite{RNZ1}. This argument refers
to the structure theorems for the {\it stable\/} unitary groups
established in \cite{B1,bass73}, see also \cite{ZZ,RN}.

\begin{Lem}\label{Lem:GGG}
Let $(\FormR)$ be a form ring and $(I,\Gamma)$ and  $(J,\Delta)$
be form ideals of  $(\FormR)$.  Then we have
\begin{equation}\label{ohdmre}
\big[\GU(2n,I,\Gamma),\GU(2n,J,\Delta)\big]\subseteq
\GU(2n,(I,\Gamma)\circ(J,\Delta)).
\end{equation}
\end{Lem}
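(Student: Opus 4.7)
The inclusion at the matrix-entry level is already provided by Lemma~22 of \cite{RNZ1}, which gives
\[
[\GU(2n,I,\Gamma),\GU(2n,J,\Delta)]\subseteq \GU(2n,I\circ J,\Gamma_{\max}(I\circ J)),
\]
so for every commutator $[g,h]$ with $g\in\GU(2n,I,\Gamma)$ and $h\in\GU(2n,J,\Delta)$ the entries of $[g,h]-e$ already lie in $I\circ J$ and the form parameter defect $f([g,h]u,[g,h]u)-f(u,u)$ already lies in $\Lambda\cap(I\circ J)=\Gamma_{\max}(I\circ J)$. What has to be proved is the sharpening of the form parameter from $\Gamma_{\max}(I\circ J)$ down to the (in general strictly smaller) symmetrised product $\Gamma\circ\Delta={}^J\Gamma+{}^I\Delta+\Gamma_{\min}(I\circ J)$.

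The plan is to import this refinement from the stable unitary group via the structure theorems of Bak's thesis \cite{B1,bass73} (see also \cite{ZZ,RN}). First, using that $A$ is quasi-finite and that $\GU_{2n}$ commutes with direct limits (see \S\ref{sub:1.3}), I would reduce to the case in which $A$ is module finite over a Noetherian commutative subring of $R_0$ carrying all four form ideals $(I,\Gamma)$, $(J,\Delta)$ and their symmetrised product. Second, I would stabilise by embedding
\[
\GU(2n,A,\Lambda)\hookrightarrow \GU(2N,A,\Lambda),\qquad g\mapsto g\oplus e_{2(N-n)},
\]
with $N$ chosen large relative to the Bass--Serre dimension of the base ring, so that we enter the stable range where Bak's theorems yield
\[
\bigl[\GU(2N,I,\Gamma),\GU(2N,J,\Delta)\bigr]\subseteq \GU\bigl(2N,(I,\Gamma)\circ(J,\Delta)\bigr).
\]
Third, I would descend back to level $2n$: since $g$ and $h$ fix every additional basis vector $e_{\pm i}$ with $i>n$, so does $[g,h]$, and the form parameter condition inherited from the stable inclusion is pointwise on vectors; restricting it to vectors supported on the first $2n$ coordinates gives exactly $f([g,h]u,[g,h]u)-f(u,u)\in \Gamma\circ\Delta$ for all $u\in A^{2n}$, which is the required membership in $\GU(2n,(I,\Gamma)\circ(J,\Delta))$.

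The hard part will be the second step: a coarse stabilisation argument only reproduces the already-known coarse inclusion with parameter $\Gamma_{\max}(I\circ J)$, so the refinement to $\Gamma\circ\Delta$ must rely on Bak's precise stable commutator computation, which in turn rests on surjective stability of unitary $\K_1$, normality of the elementary subgroup in the stable range, and a careful bookkeeping of the conjugation actions of $I$ on $\Delta$ and of $J$ on $\Gamma$ that manufacture precisely the combination ${}^J\Gamma+{}^I\Delta+\Gamma_{\min}(I\circ J)$. Once this stable identification is in hand, both the quasi-finite reduction and the descent to level $2n$ are essentially formal, since membership in a principal congruence subgroup is a pointwise property inherited under restriction to the original block of coordinates.
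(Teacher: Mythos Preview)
Your overall architecture—pass to the stable group, prove the inclusion there, then descend to level $2n$ via the block embedding—is exactly the route the paper takes. The descent step is handled just as you say: the paper records it as $\varphi\big(\GU(2n,\FormR)\big)\cap\GU((I,\Gamma)\circ(J,\Delta))=\varphi\big(\GU(2n,(I,\Gamma)\circ(J,\Delta))\big)$ for the stability embedding $\varphi$, which is precisely your ``pointwise restriction'' observation. Two comments on where your outline diverges from the paper.

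First, the quasi-finite reduction is unnecessary here. The paper proves the stable inclusion for an \emph{arbitrary} form ring; no Noetherian or module-finite hypothesis is invoked. It passes directly to the infinite stable group $\GU(\FormR)$, not to a large finite $N$ chosen against the Bass--Serre dimension.

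Second, and more importantly, the paper does not import the stable inclusion as a black box from Bak's thesis, nor does it use surjective stability for $\K_1$. The stable argument is carried out explicitly, and the mechanism is different from what you sketch. The key tool is the stable \emph{sandwich classification} (Bass/Bak, \cite[Theorem~5.4.10]{HO}): since $[\GU(I,\Gamma),\GU(J,\Delta)]$ is normalised by $\EU(\FormR)$, there is a unique form ideal $(K,\Omega)$ with
\[
\EU(K,\Omega)\subseteq[\GU(I,\Gamma),\GU(J,\Delta)]\subseteq\GU(K,\Omega).
\]
One then identifies $(K,\Omega)$ by computing iterated commutators with $\EU(\FormR)$: the three-subgroup lemma together with the absolute stable commutator formula gives
\[
\big[[\GU(I,\Gamma),\GU(J,\Delta)],\EU(\FormR)\big]\subseteq[\EU(I,\Gamma),\EU(J,\Delta)],
\]
and a second application, combined with the stable analogue of Lemma~\ref{Lem:Habdank}, forces $\EU(K,\Omega)=\EU((I,\Gamma)\circ(J,\Delta))$, hence $(K,\Omega)=(I,\Gamma)\circ(J,\Delta)$. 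So the ``careful bookkeeping of conjugation actions'' you anticipate is replaced by a soft level-detection argument via the sandwich theorem; the precise form parameter $\Gamma\circ\Delta$ emerges from Lemma~\ref{Lem:Habdank}, not from a direct matrix computation.
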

\begin{proof}
We first show that (\ref{ohdmre}) holds for the {\it stable\/}
unitary groups, namely that
\begin{equation}\label{stable1}
[\GU(I,\Gamma),\GU(J,\Delta)]\subseteq\GU((I,\Gamma)\circ(J,\Delta)).
\end{equation}
\noindent
By the stable analogue of Lemma~\ref{Lem:Habdank}, which immediately
follows by passage to limits, we have inclusions
\begin{equation}\label{stable2}
        \EU((I,\Gamma)\circ(J,\Delta))\subseteq
[\EU(I,\Gamma),\EU(J,\Delta)]\subseteq[\GU(I,\Gamma),\GU(J,\Delta)]
\end{equation}
and
\begin{equation}\label{stable3}
[\EU(I,\Gamma),\EU(J,\Delta)]\subseteq \GU((I,\Gamma)\circ(J,\Delta)).
\end{equation}

Since the subgroup $[\GU(I,\Gamma),\GU(J,\Delta)]$ is normalized by
$E(\FormR)$, applying Bass' sandwich theorem, see~\cite[Theorem~5.4.10]{HO},
we can conclude that there exists a unique form ideal $(K,\Omega)$ such
that
\begin{equation}\label{stable4}
\EU(K,\Omega)\subseteq [\GU(I,\Gamma),\GU(J,\Delta)]\subseteq\GU(K,\Omega).
\end{equation}
By Lemma~\ref{HW1}, we get
\begin{multline*}
\big[[\GU(I,\Gamma),\GU(J,\Delta)],\EU(\FormR)\big]\subseteq\\
\big[[\GU(I,\Gamma),\EU(\FormR)],\GU(J,\Delta)\big]\cdot
\big[[\GU(J,\Delta),\EU(\FormR)],\GU(I,\Gamma)\big].
\end{multline*}
\noindent
But the absolute commutator formula implies that
\begin{multline}
\big[[\GU(I,\Gamma),\EU(\FormR)],\GU(J,\Delta)\big]\cdot
\big[[\GU(J,\Delta),\EU(\FormR)],\GU(I,\Gamma)\big]=\\
[\EU(I,\Gamma),\EU(J,\Delta)].
\end{multline}
\noindent
Thus,
\begin{equation}\label{pjhenis}
\big[[\GU(I,\Gamma),\GU(J,\Delta)],\EU(\FormR)\big]
\subseteq[\EU(I,\Gamma),\EU(J,\Delta)].
\end{equation}
\noindent
Again by the general commutator formula and (\ref{stable3}), we have
\begin{multline}\label{assme}
\EU((I,\Gamma)\circ(J,\Delta))=
[\EU((I,\Gamma)\circ(J,\Delta)),\EU(\FormR)]\\
\subseteq\big[[\EU(I,\Gamma),\EU(J,\Delta)],\EU(\FormR)\big]\\
\subseteq[\GU((I,\Gamma)\circ(J,\Delta)),\EU(\FormR)]
=\EU((I,\Gamma)\circ(J,\Delta)).
\end{multline}
\noindent
Forming another commutator of~(\ref{pjhenis}) with $\EU(\FormR)$
and applying the inequalities obtained in~(\ref{assme}) we get
$$ \Big[\big[[\GU(I,\Gamma),\GU(J,\Delta)],\EU(\FormR)\big],
\EU(\FormR)\Big]=\EU((I,\Gamma)\circ(J,\Delta)). $$
\noindent
Using inclusions (\ref{stable4}), we see that
\begin{multline*}
\EU(K,\Omega)=\big[[\EU(K,\Omega),\EU(\FormR)],\EU(\FormR)\big]\\
\subseteq \Big[\big[[\GU(I,\Gamma),\GU(J,\Delta)],\EU(\FormR)\big],
\EU(\FormR)\Big]
=\EU((I,\Gamma)\circ(J,\Delta))\\
=\big[[\EU((I,\Gamma)\circ(J,\Delta)),\EU(\FormR)],\EU(\FormR)\big]\\
\subseteq \Big[\big[[\GU(I,\Gamma),\GU(J,\Delta)],\EU(\FormR)\big],
\EU(\FormR)\Big]\\
\subseteq \big[[\GU(K,\Omega),\EU(\FormR)],\EU(\FormR)\big]
=\EU(K,\Omega).
\end{multline*}
\noindent
Thus, we can conclude that $\EU(K,\Omega)=\EU((I,\Gamma)\circ(J,\Delta))$.
This implies that $(K,\Omega)=(I,\Gamma)\circ(J,\Delta)$, see the
second paragraph of the proof of~\cite[Theorem~5.4.10]{HO}.
Substituting this equality in (\ref{stable4}), we see that inclusion
(\ref{stable1}) holds at the stable level, as claimed.
\par
Let $\varphi$ denote the usual stability embedding
$\varphi:\GU(2n,\FormR)\to\GU(\FormR)$. Then
\begin{multline*}
\varphi\big(\big[\GU(2n,I,\Gamma),\GU(2n,J,\Delta)\big]\big)=
\big[\varphi\big(\GU(2n,I,\Gamma)\big),
\varphi\big(\GU(2n,J,\Delta)\big)\big]\subset\\
\big[\GU(I,\Gamma),\GU(J,\Delta)\big].
\end{multline*}
\noindent
In particular, the result at the stable level implies that
$$ \varphi\big(\big[\GU(2n,I,\Gamma),\GU(2n,J,\Delta)\big]\big)
\subseteq\varphi\big(\GU(2n,\FormR)\big)\cap
\GU((I,\Gamma)\circ(J,\Delta)). $$
\noindent
On the other hand,
$$ \varphi\big(\GU(2n,\FormR)\big)\cap\GU((I,\Gamma)\circ(J,\Delta))=
\varphi\big(\GU(2n,(I,\Gamma)\circ(J,\Delta))\big). $$
\noindent
Since $\varphi$ is injective, we can conclude that
$$ [\GU(2n,I,\Gamma),\GU(2n,J,\Delta)]\subseteq
\GU(2n,(I,\Gamma)\circ(J,\Delta)), $$
\noindent
as claimed.
\end{proof}


\section{Generation of mixed commutator subgroups}\label{sec6}

Our next result is a higher analogue of Lemma~\ref{genelm}. Despite its
technical character, it is one of the main new tools of our
proof. Actually, for applications to width problems it is more
expedient to construct a shorter set of generators, and this will
be done in our subsequent paper. The form below is especially
adjusted for the version of the relative commutator calculus
we cultivate in the two following sections.

\begin{The}\Label{Comgenerator}
Let $(\FormR)$ be a form ring and $(I,\Gamma)$, $(J,\Delta)$ be two
form ideals of $(\FormR)$. Then the mixed commutator subgroup
$\big[\EU(2n,I,\Gamma),\EU(2n,J,\Delta)\big]$ is generated as
a group by the elements of the form
\par\smallskip
$\bullet$ ${}^{c}\big[T_{ji}(\alpha),{}^{T_{ij}(a)}T_{ji}(\beta)\big]$,
\par\smallskip
$\bullet$ ${}^c\big[T_{ji}(\alpha),T_{ij}(\beta)\big]$,
\par\smallskip
$\bullet$ ${}^c T_{ij}(\xi)$,
\par\smallskip\noindent
where $T_{ji}(\alpha)\in\EU(2n,I,\Gamma)$,
$T_{ji}(\beta)\in \EU(2n,J,\Delta)$,
$T_{ij}(\xi)\in\EU\big(2n,(I,\Gamma)\circ(J,\Delta)\big)$,
and $T_{ij}(a),c\in\EU(2n,\FormR)$.
\end{The}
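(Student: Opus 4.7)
The plan is to start from the defining generators of the mixed commutator and, via the commutator calculus of \S\ref{sub:1.4} together with the Steinberg relations of \S\ref{elementary2}, reduce them to the three listed types; the correction terms arising in the process will be absorbed into the symmetrised-product level $\EU(2n,(I,\Gamma)\circ(J,\Delta))$, which is captured by the type (c) generators thanks to Lemma~\ref{Lem:Habdank}.

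First, I would observe that $\bigl[\EU(2n,I,\Gamma),\EU(2n,J,\Delta)\bigr]$ is generated as a subgroup by commutators $[x,y]$ with $x\in\EU(2n,I,\Gamma)$ and $y\in\EU(2n,J,\Delta)$. By Lemma~\ref{genelm}, each of $x,y$ is a product of the $Z$-generators $Z_{ij}(\xi,\zeta)={}^{T_{ji}(\zeta)}T_{ij}(\xi)$ of the corresponding relative elementary subgroup. Applying the identities (C$1^+$) and (C$2^+$) iteratively, one expands $[x,y]$ into a product of $\EU(2n,\FormR)$-conjugates of pair-commutators $\bigl[Z_{ij}(\alpha,\zeta_1),\,Z_{kl}(\beta,\zeta_2)\bigr]$ with $\alpha$ in $I$ (resp.\ $\Gamma$) and $\beta$ in $J$ (resp.\ $\Delta$). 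Using identity (C4), I would pull the outer conjugation $T_{ji}(\zeta_1)$ to the front, rewriting each pair-commutator in the shape ${}^{c}\bigl[T_{ij}(\alpha),\,{}^{u}T_{kl}(\beta)\bigr]$ with $c\in\EU(2n,\FormR)$ and $u$ a product of at most two elementary transvections.

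The crux is a case analysis on the relative position of the root subgroups $X_{kl}$ and $X_{ij}$. When $X_{kl}=X_{ij}$ (i.e., $(k,l)=(i,j)$ or $(k,l)=(-j,-i)$), the Steinberg relations absorb the $u$-conjugation together with $T_{kl}(\beta)$ into a factor of the form ${}^{T_{ji}(a)}T_{ij}(\beta)$, up to relation (R1) in the second subcase, so the whole commutator takes the shape ${}^{c'}\bigl[T_{ij}(\alpha),{}^{T_{ji}(a)}T_{ij}(\beta)\bigr]$, matching a type (a) generator after relabelling $i\leftrightarrow j$. When $X_{kl}$ is opposite to $X_{ij}$ (i.e., $(k,l)=(j,i)$ or $(k,l)=(-i,-j)$), the inner conjugation reduces, modulo elementary transvections at level $(I,\Gamma)\circ(J,\Delta)$, to the trivial conjugation, yielding a type (b) generator. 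In every remaining index configuration the Steinberg relations (R3)--(R6) immediately convert $\bigl[T_{ij}(\alpha),{}^{u}T_{kl}(\beta)\bigr]$ into a product of elementary transvections at level $(I,\Gamma)\circ(J,\Delta)$, which are type (c) generators by definition.

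The main obstacle will be the careful bookkeeping of error terms produced while pushing $T_{ji}(-\zeta_1)$ through the subwords of $u$ via the Chevalley commutator formula. Each such correction is an elementary transvection whose parameter lies in $IJ+JI$ (or, for long roots, in $\Gamma_{\min}(IJ+JI)$, ${}^J\Gamma$, or ${}^I\Delta$), and Lemma~\ref{Lem:Habdank} together with the refined level inclusion of Lemma~\ref{Lem:GGG} ensures that each such correction lies in $\EU(2n,(I,\Gamma)\circ(J,\Delta))$ and is therefore accounted for by a type (c) generator. The reverse inclusion is immediate: types (a) and (b) are manifestly commutators of an element of $\EU(2n,I,\Gamma)$ with one of the normal subgroup $\EU(2n,J,\Delta)$, while type (c) lies in the mixed commutator by Lemma~\ref{Lem:Habdank}.
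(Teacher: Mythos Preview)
Your overall architecture is the right one and matches the paper: expand both factors into $Z$-generators via Lemma~\ref{genelm}, use (C$1^+$)/(C$2^+$) to reduce to a single pair, pull one conjugation outside with (C4), then do a root-position case analysis. One case goes through exactly as you say: when $X_{kl}=X_{ij}$, the two factors of $u=T_{ji}(-\zeta_1)T_{lk}(\zeta_2)$ lie in the same root subgroup $X_{ji}$, collapse to a single $T_{ji}(a)$, and you land on a type~(a) generator.

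The gap is in the other two cases, and it is a genuine one. After your (C4) step you are left with
\[
{}^{c}\bigl[T_{ij}(\alpha),\,{}^{u}T_{kl}(\beta)\bigr],\qquad
u=T_{ji}(-\zeta_1)\,T_{lk}(\zeta_2),
\]
where $\zeta_1,\zeta_2$ live at the \emph{absolute} level $(A,\Lambda)$. When $X_{kl}\neq X_{ij}$, the two factors of $u$ do not sit in the same root subgroup, so the Steinberg relations do not ``immediately'' evaluate $[T_{ij}(\alpha),{}^{u}T_{kl}(\beta)]$; you first have to commute $T_{ji}(-\zeta_1)$ past $T_{lk}(\zeta_2)T_{kl}(\beta)T_{lk}(-\zeta_2)$. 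Those corrections involve $\zeta_1\in A$ against $\beta\in J$ (and $\zeta_2\in A$), so they lie at level $J$, \emph{not} at level $I\circ J$. They therefore cannot be absorbed as type~(c) generators, contrary to what your last paragraph asserts. (Lemma~\ref{Lem:GGG}, which you invoke, concerns $[\GU,\GU]$ and plays no role here.) The ``opposite'' case is already a counterexample to your claim: with $(k,l)=(j,i)$ you get $u=T_{ji}(-\zeta_1)T_{ij}(\zeta_2)$, and ${}^{u}T_{ji}(\beta)$ differs from $T_{ji}(\beta)$ by corrections at level $J$, not at level $(I,\Gamma)\circ(J,\Delta)$.

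What the paper does to escape this is a step you are missing: after the first (C4), observe that ${}^{T_{pq}(-a)T_{ij}(b)}T_{ji}(\beta)\in\EU(2n,J,\Delta)$ by \emph{normality} of the relative elementary subgroup, re-expand this element via Lemma~\ref{genelm} into a fresh product of $Z$-generators, and apply (C$1^+$) once more. This second pass reduces the problem to commutators of the shape
\[
\bigl[T_{pq}(\alpha),\,Z_{ij}(b,\beta)\bigr]
=\bigl[T_{pq}(\alpha),\,{}^{T_{ji}(b)}T_{ij}(\beta)\bigr],
\]
i.e.\ with only a \emph{single} conjugating transvection on the right. Now the case analysis really is immediate: same indices give type~(a) directly; opposite indices give a type~(b) after one more (C4); all remaining positions are handled by (R3)--(R6) and produce honest level-$(I,\Gamma)\circ(J,\Delta)$ transvections, i.e.\ type~(c). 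Inserting this normality/re-expansion step would repair your argument.
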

\begin{proof}
A typical generator of $\big[\EU(2n,I,\Gamma),\EU(2n,J,\Delta)\big]$
is of the form $[e,f]$, where $e\in\EU(2n,I,\Gamma)$ and
$f\in\EU(2n,J,\Delta)$. Thanks to Lemma~\ref{genelm}, we may assume
that $e$ and $f$ are products of elements of the form
$$ e_{i}=Z_{pq}(a,\alpha),\qquad f_{j}=Z_{rs}(b,\beta), $$
\noindent
where $a\in A$ and $\alpha\in I$ if $p\ne\pm q$ and
$a\in\lambda^{-(\ep(p)+1)/2}\Lambda$,
$\alpha\in\lambda^{-(\ep(-p)+1)/2}\Gamma$ if $p=-q$, and
where $b\in A$ and $ \beta \in J$ if $r\ne \pm s$ and
$b\in \lambda^{-(\ep(r)+1)/2}\Lambda$,
$\beta\in \lambda^{-(\ep(-r)+1)/2}\Delta $ if $r=-s$.
\par
Applying (C$1^+$) and then (C$2^+$), one gets that
$\big[\EU(2n,I,\Gamma),\EU(2n,J,\Delta)\big]$ is generated
by the elements of the form
$$ ^{c}\big[^{T_{pq}(a)}T_{qp}(\alpha),{}^{T_{ij}(b)}T_{ji}(\beta)\big], $$
\noindent
where $c\in\EU(2n,\FormR)$. Furthermore,
$$ {}^c\big[Z_{pq}(a,\alpha),{}^{T_{ij}(b)}T_{ji}(\beta)\big]=
{}^{cT_{pq}(a)}\big[T_{qp}(\alpha),
{}^{T_{pq}(-a)T_{ij}(b)}T_{ji}(\beta)\big]. $$
\noindent
The normality of $\EU(2n,J,\Delta)$ implies that
${}^{T_{pq}(-a)T_{ij}(b)}T_{ji}(\beta)\in\EU(2n,J,\Delta)$,
which is a product of  $Z_{ij}(\xi,\zeta)$ by Lemma~\ref{genelm}.
Again by (C$1^+$), one reduces the proof to the case
of showing that
$$ \big[T_{pq}(\alpha),Z_{ij}(b,\beta)\big] $$
\noindent
is a product of the generators listed above. We divide
the proof into 3 cases, namely,
\begin{itemize}
\item[I.] $T_{pq}(\alpha)$ and $Z_{ij}(b,\beta)$ have
opposite indices, namely $p=j $ and $q=i$.
\smallskip
\item[II.] $T_{pq}(\alpha)$ and $Z_{ij}(b,\beta)$ have the same indices.
\smallskip
\item[III.] Otherwise.
\end{itemize}
\par\smallskip

Case I,  $T_{pq}(\alpha)$ and $Z_{ij}(b,\beta)$ are opposite,
then $p=j $ and $q=i$.
$$ \big[T_{ji}(\alpha),Z_{ij}(b,\beta)\big]=
\big[T_{ji}(\alpha),{}^{T_{ji}(b)}T_{ij}(\beta)\big]=
{}^{T_{ji}(b)}\big[T_{ji}(\alpha),T_{ij}(\beta)\big], $$
\noindent
which is a generator listed in the current theorem.
\par\smallskip
Case II, there is nothing to prove.
\par\smallskip
Case III, the proof can be further subdivided into the
following subcases:
\begin{itemize}
\item[1.] $T_{pq}(\alpha)$ commutes with $Z_{ij}(b,\beta)$.
$\big[T_{pq}(\alpha),Z_{ij}(b,\beta)\big]=1$
which satisfies the lemma.
\item[2.] $T_{pq}(\alpha)$ and $Z_{ij}(b,\beta)$ are short
roots, $q=i$ and  $p\ne \pm j$. Using (C4) and (R3), one gets
\begin{eqnarray*}
\big[T_{pi}(\alpha),Z_{ij}(b,\beta)\big]&=&
\big[T_{pi}(\alpha),{}^{T_{ji}(b)}T_{ij}(\beta)\big]\\
&=&{}^{T_{ji}(b)}\big[T_{pi}(\alpha)^{T_{ji}(b)},T_{ij}(\beta)\big]\\
&=&{}^{T_{ji}(b)}\big[T_{pi}(\alpha), T_{ij}(\beta)\big]\\
&=&{}^{T_{ji}(b)}T_{pj}(\alpha\beta),
\end{eqnarray*}
\noindent
which is a generator of $\big[\EU(2n,I,\Gamma),\EU(2n,J,\Delta)\big]$.
\item[3.] $T_{pq}(\alpha)$ and $Z_{ij}(b,\beta)$ are short roots,
$q=i$ and  $p=- j$. Using (C4) and (R5), one gets
\begin{eqnarray*}
\big[T_{-j,i}(\alpha),Z_{ij}(b,\beta)\big]&=&
\big[T_{-j,i}(\alpha),{}^{T_{ji}(b)}T_{ij}(\beta)\big]\\
&=&{}^{T_{ji}(b)}\big[T_{-j,i}(\alpha)^{T_{ji}(b)},T_{ij}(\beta)\big]\\
&=&{}^{T_{ji}(b)}\big[T_{-j,i}(\alpha), T_{ij}(\beta)\big]\\
&=&{}^{T_{ji}(b)}T_{-j,j}(\alpha\beta-\lambda^{\ep(j)}\bar\beta\bar\alpha),
\end{eqnarray*}
which is of the form in the theorem.

\item[4.] $T_{pq}(\alpha)$ and $Z_{ij}(b,\beta)$ are short roots,
$p=j$ and $q\ne\pm i$. Using  (R1), we reduce our consideration
to the subcase (2).

\item[5.] $T_{pq}(\alpha)$ and $Z_{ij}(b,\beta)$ are short roots,
$p=j$ and $q\ne- i$. Using  (R1), we reduce our consideration to the subcase (3).

\item[6.] $T_{pq}(\alpha)$ is a long root and $Z_{ij}(b,\beta)$
is a short root,  $q=i$. Using (R6), we get
\begin{eqnarray*}
\big[T_{-i,i}(\alpha), Z_{ij}(b,\beta)\big]&=&\big[T_{-i,i}(\alpha),
{}^{T_{ji}(b)}T_{ij}(\beta)\big]\\
&=&{}^{T_{ji}(b)}\big[T_{-i,i}(\alpha)^{T_{ji}(b)},T_{ij}(\beta)\big]\\
&=&{}^{T_{ji}(b)}\big[T_{-i,i}(\alpha), T_{ij}(\beta)\big]\\
&=&{}^{T_{ji}(b)}\big(T_{-i,j}(\alpha\beta)
T_{-j,j}(-\lambda^{(\ep(j)+\ep(i))/2}\beta\alpha\bar\beta)\big)\\
&=&\Big({}^{T_{ji}(b)}T_{-i,j}(\alpha\beta)\Big)
\Big({}^{T_{ji}(b)}T_{-j,j}(-\lambda^{(\ep(j)-\ep(i))/2}\beta\alpha\bar\beta)\Big),
\end{eqnarray*}
\noindent
which is a product of the generators in the lemma.

\item[7.] $T_{pq}(\alpha)$ is a long root and $Z_{ij}(b,\beta)$
is a short root,  $p=j$. Using  (R1), we reduce our consideration
to the subcase (6).

\item[8.] $T_{pq}(\alpha)$ is a short root and $Z_{ij}(b,\beta)$
is a long root, $q=i$. Using (R6), we have
\begin{eqnarray*}
\big[T_{pi}(\alpha), Z_{i,-i}(b,\beta)\big]&=&\big[T_{pi}(\alpha),
{}^{T_{-i,i}(b)}T_{i,-i}(\beta)\big]\\
&=&{}^{T_{-i,i}(b)}\big[T_{pi}(\alpha)^{T_{-i,i}(b)},T_{i,-i}(\beta)\big]\\
&=&{}^{T_{-i,i}(b)}\big[T_{pi}(\alpha), T_{i,-i}(\beta)\big]\\
&=&{}^{T_{-i,i}(b)}\big(T_{p,-i}(\alpha\beta)
T_{p,-p}(-\lambda^{(\ep(i)-\ep(p))/2}\alpha\beta\bar\alpha)\big)\\
&=&\Big({}^{T_{-i,i}(b)}T_{p,-i}(\alpha\beta)\Big)
\Big({}^{T_{-i,i}(b)}T_{p,-p}(-\lambda^{(\ep(i)-\ep(p))/2}\alpha\beta\bar\alpha)\Big),
\end{eqnarray*}
\noindent
which is a product of the generators in the lemma.

\item[9.] $T_{pq}(\alpha)$ is a short root and $Z_{ij}(b,\beta)$
is a long root, $p=j$. Using (R1), we reduce it to the subcase (8).
This finishes the proof of Case III, hence the whole proof. \qedhere
\end{itemize}
\end{proof}


\section{Commutator calculus}\label{sec7}

This and the next two sections constitute the technical heart of
the paper.
\par
Let us recall some facts from \cite{RNZ1}. For any $t\neq 0\in R_0$
and any given positive integer $l$, the
set $t^lA$ is in fact an ideal of the algebra $A$. Similarly, it is
straightforward to verify that
$t^l\Lambda=\{t^l\alpha\mid \alpha\in\Lambda\}$ is in fact a relative
form parameter for $t^lA$, and, thus, $(t^lA,t^l\Lambda)$
is a form ideal. This allows us to define  the corresponding
groups $\FU(2n,t^lA,t^l\Lambda)$ and $\FU(2n,t^lI,t^l\Gamma)$. To
make calculations somewhat less painful, we introduce the group
$\FUnT{l}{I}$ which by definition is the normal closure of
$\FUnt{l}{I}$ in $\FFUnt{l}$,
$$ \FUnT{l}{I}={}^{\FFUnt{l}}{\FUnt{l}{I}}\unlhd\FFUnt{l}. $$
\noindent
Actually, the use of this base of $t$-adic neighbourhoods instead of
the usual ones is precisely one of the key technical tricks of
\cite{RHZZ1,RNZ1,RNZ2}. Normality of $\FUnT{l}{I}$ in $\FFUnt{l}$
will be repeatedly used in the sequel. Observe, that
$\EUnT{l}{A}=\EUnt{l}{A}$.
\par
Let us introduce a further piece of notation. For a form ideal
$(I,\Gamma)$ and an element $t\in R_0$, the set
$\FU^1\Big(2n,\frac{I}{t^m},\frac{\Gamma}{t^m}\Big)$ consists
of elementary unitary transvections $T_{ij}(a)$, such that
$a\in\frac{I}{t^m}$ if $i\neq\pm j$ and
$a\in\lambda^{\ep(i)+1)/2}\frac{\Gamma}{t^m}$ if $i=-j$. Denote by
$\FU^L\Big(2n,\frac{A}{t^m},\frac{I}{t^m},\frac{\Gamma}{t^m}\Big)$
the set of products of $\le L$ elements of the form
${}^{\FU^1\Big(2n,\frac{A}{t^m},\frac{\Lambda}{t^m}\Big)}
\FU^1\Big(2n,\frac{I}{t^m},\frac{\Gamma}{t^m}\Big)$.
The set $\FU^1(2n,t^mI,t^m\Gamma)$ is defined similarly. By the
same token, $\FU^K(2n,t^mI,t^m\Gamma)$, denotes the set of products
of $\le K$ elements of $\FU^1(2n,t^mI,t^m\Gamma)$.

The next result is a summary of the relative conjugation calculus
and relative commutator calculus, as developed in \cite{RNZ1},
Lemmas~8, 11 and 12.

\begin{Lem}\Label{Lem:cong2}
For any given $m,l$ there exists a sufficiently large integer $p$
such that
\begin{equation}\Label{eqn:RVZ1}
{}^{\FU^1\big(2n,\frac{A}{t^m},\frac{\Lambda}{t^m}\big)}\FUnt{p}{I}\le\FUnT{l}{I},
\end{equation}
\noindent
there exists an integer $p$ such that
\begin{align}
{}^{\FU^1\big(2n,\frac{A}{t^m},\frac{\Lambda}{t^m}\big)}
\big[\FUnT{p}{I}, &\FU(2n,t^pA,t^pJ,t^p\Delta)\big]\notag\\
&\subseteq\big[\FUnT{l}{I},\FU(2n,t^lA,t^lJ,t^l\Delta)\big],\Label{eqn:RVZ2}
\end{align}
and there exists an integer $p$ such that
\begin{align}
\Big[\FUnTk{p}{I}{},\FU^1\Big(2n,\frac{J}{t^m},&\frac{\Delta}{t^m}\Big)\Big]\notag \\
&\subseteq\big[\FUnT{l}{I},\FUnTJ{l}{J}\big].\Label{eqn:RVZ3}
\end{align}
\end{Lem}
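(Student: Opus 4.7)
The plan is to verify the three inclusions by systematic bookkeeping of $t$-adic valuations under the Steinberg relations (R1)--(R6). The underlying principle is that each application of a Chevalley commutator formula consumes at most a bounded number of denominator powers of $t$ in exchange for producing a short product of elementary transvections whose parameters inherit a total $t$-valuation of at least $p - Cm$ for a small absolute constant $C$. Choosing $p$ large in terms of $l$ and $m$ then pushes every output into the desired level.

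First I would prove (\ref{eqn:RVZ1}). By (C$1^+$) it suffices to treat one conjugator $T_{kl}(b/t^m) \in \FU^1(2n, A/t^m, \Lambda/t^m)$ and one generator $T_{ij}(t^p\xi) \in \FUnt{p}{I}$ at a time. Using the identity ${}^{x}y = [x,y] \cdot y$, the conjugate reduces to computing $[T_{kl}(b/t^m), T_{ij}(t^p\xi)]$ via (R3)--(R6). Case (R3) yields the identity; cases (R4)--(R6) output a product of at most two elementary transvections whose parameters are monomials in $b/t^m$, $t^p\xi$, $\lambda^{\pm}$, and their involutes. The worst case is (R6), where a second factor of shape $T_{-j,j}(\bar\beta\alpha\beta)$ has $t$-valuation at least $p - 2m$ in the numerator. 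Hence the choice $p \geq l + 2m$ places each factor into $\FUnt{l}{I}$ (or the appropriate long-root subgroup at level $t^{l}$), and the product lies in $\FUnT{l}{I}$ by its normality in $\FFUnt{l}$.

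Next, for (\ref{eqn:RVZ2}), identities (C4) and (C5) yield ${}^{w}[f,g] = [{}^w f, {}^w g]$. Applied to $w = T_{kl}(b/t^m)$ and a commutator generator $[f,g]$ with $f \in \FUnT{p}{I}$ and $g \in \FU(2n, t^p A, t^p J, t^p \Delta)$, the task reduces to showing that ${}^{T_{kl}(b/t^m)} f \in \FUnT{l}{I}$ and ${}^{T_{kl}(b/t^m)} g \in \FU(2n, t^l A, t^l J, t^l \Delta)$; both follow from (\ref{eqn:RVZ1}) applied separately to the two form ideals. For (\ref{eqn:RVZ3}), a typical generator $[h, T_{kl}(b/t^m)]$ with $h$ a product of elementary transvections at level $t^p$ is expanded via (C$2^+$) into a product of conjugates of individual commutators $[T_{ij}(t^p\xi), T_{kl}(b/t^m)]$. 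The case analysis of (\ref{eqn:RVZ1}) now reads each such commutator as a product of elementary transvections whose indices and levels make it naturally a commutator of an element of $\FUnT{l}{I}$ with an element of $\FUnTJ{l}{J}$, and the outer conjugations are absorbed via (\ref{eqn:RVZ1}) and the normality of the ambient groups.

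The main obstacle is the uniform $t$-adic bookkeeping in the long-root cases (R5) and (R6), where the output involves bilinear or trilinear expressions in the parameters, together with the verification that the involution preserves $t$-valuation --- which holds because $t \in R_0$ is fixed by the involution, as noted in \S\ref{quasi-finite}. The explicit bound on $p$ that emerges is linear in $l$ and $m$ (with a mild dependence on the complexity of $h$ in (\ref{eqn:RVZ3})), so a single uniform choice suffices for all three inclusions.
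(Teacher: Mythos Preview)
The paper does not prove this lemma; it records it as a summary of \cite{RNZ1}, Lemmas~8, 11 and 12. Your approach --- bookkeeping of $t$-adic valuations through the Steinberg relations (R1)--(R6) --- is exactly the method of \cite{RNZ1}, so on the level of strategy you are aligned with the source.

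That said, two points in your sketch are not yet solid. First, in your argument for (\ref{eqn:RVZ2}) you apply (\ref{eqn:RVZ1}) to $f\in\FUnT{p}{I}$, but (\ref{eqn:RVZ1}) as stated only controls conjugates of elements of $\FUnt{p}{I}$, not of its normal closure in $\FFUnt{p}$. Bridging this requires an extra step: write $f$ as a product of conjugates ${}^{c}T_{ij}(t^p\xi)$ with $c\in\FFUnt{p}$, rewrite ${}^{w}({}^{c}g)={}^{({}^{w}c)}({}^{w}g)$, and then apply (\ref{eqn:RVZ1}) once with $(I,\Gamma)=(A,\Lambda)$ to control ${}^{w}c$ and once with $(I,\Gamma)$ itself to control ${}^{w}g$; normality of $\FUnT{l}{I}$ in $\FFUnt{l}$ then finishes it. This is routine but should be said.

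Second, and more seriously, your closing remark that the bound on $p$ has ``a mild dependence on the complexity of $h$'' in (\ref{eqn:RVZ3}) would, if true, kill the lemma: the statement requires a single $p$ valid for \emph{every} $h\in\FUnT{p}{I}$, and elements of that group have unbounded word length. The resolution is that no such dependence arises. After expanding $[h,T_{kl}(\beta/t^m)]$ via (C$2^+$) into a product of conjugates ${}^{h_1\cdots h_{i-1}}[h_i,T_{kl}(\beta/t^m)]$, each inner commutator lands in $\big[\FUnT{l}{I},\FUnTJ{l}{J}\big]$ by your single-generator analysis, and the outer conjugating elements $h_1\cdots h_{i-1}$ lie in $\FUnT{p}{I}\subseteq\FFUnt{l}$; since both $\FUnT{l}{I}$ and $\FUnTJ{l}{J}$ are normal in $\FFUnt{l}$, so is their mutual commutator, and the conjugation is absorbed for free. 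You should make this normality step explicit and delete the parenthetical about dependence on $h$.
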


With the use of the Hall--Witt identity (C3) this lemma
immediately implies the following result.

\begin{Lem}\Label{Lem:RVZ4}
For any given $m,l,L$ there exists a sufficiently large integer
$p$ such that
\begin{multline}
\Big[\FUnTk{p}{I}{},\FU^L\Big(2n,\frac{A}{t^m},\frac{J}{t^m},
\frac{\Delta}{t^m}\Big)\Big]\notag \\
\subseteq\big[\FUnT{l}{I},\FUnTJ{l}{J}\big].\Label{eqn:RVZ4}
\end{multline}
\end{Lem}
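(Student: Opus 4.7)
The plan is a two-level induction. At the outer level I would induct on $L$: any element $y\in\FU^L\big(2n,\frac{A}{t^m},\frac{J}{t^m},\frac{\Delta}{t^m}\big)$ decomposes as $y=u\cdot v$ with $u\in\FU^{L-1}$ and $v\in\FU^1$ of the same form, so~(C1) gives $[e,y]=[e,u]\cdot{}^u[e,v]$. The induction hypothesis handles $[e,u]$, and the $L=1$ case handles $[e,v]$; the remaining task is to absorb the conjugation by $u$ into the level of the commutator subgroup.

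For the base case $L=1$, I would use the identity ${}^xf=[x,f]\cdot f$ and~(C1) to write
$$[e,{}^xf]=\big[e,[x,f]\big]\cdot{}^{[x,f]}[e,f].$$
The factor $[e,f]$ lies in $\big[\FUnT{l'}{I},\FUnTJ{l'}{J}\big]$ by~(\ref{eqn:RVZ3}) once $p$ is large. To treat the triple commutator $\big[e,[x,f]\big]$, I would invoke the Hall--Witt identity~(C3), which after the standard rearrangement expresses it as a product of conjugates of $\big[[x,e],f\big]$ and $\big[[f,e],x\big]$. In the first, $[x,e]\in\FUnT{p'}{I}$ by~(\ref{eqn:RVZ1}), so $\big[[x,e],f\big]\in\big[\FUnT{l'}{I},\FUnTJ{l'}{J}\big]$ by~(\ref{eqn:RVZ3}). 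In the second, $[f,e]$ already lies in the target by~(\ref{eqn:RVZ3}), and the outer commutator with $x$ is reduced to controlling conjugation of an element of the target by an element of $\FU^1\big(2n,\frac{A}{t^m},\frac{\Lambda}{t^m}\big)$.

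Both the outer conjugation by $u$ in the inductive step and the outer conjugations appearing in the Hall--Witt expansion are controlled by the following preliminary, which I would record first: for every $N$ and $l$ there exists $l'$ such that conjugation by any product of at most $N$ elements of $\FU^1\big(2n,\frac{A}{t^m},\frac{\Lambda}{t^m}\big)$ carries $\big[\FUnT{l'}{I},\FUnTJ{l'}{J}\big]$ into $\big[\FUnT{l}{I},\FUnTJ{l}{J}\big]$. This reduces to the case $N=1$, where ${}^\sigma[a,b]=[{}^\sigma a,{}^\sigma b]$ and~(\ref{eqn:RVZ1}) applied separately to the form ideals $(I,\Gamma)$ and $(J,\Delta)$ controls each factor, with ambient conjugation absorbed by the normality of $\FUnT{l'}{I}$ and $\FUnTJ{l'}{J}$ inside $\FFUnt{l'}$. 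Unfolding each factor ${}^xf=xfx^{-1}$ of $u$ shows $u$ is a product of at most $3(L-1)$ such elements, so the preliminary covers it.

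The main obstacle is the quantitative bookkeeping: each invocation of Lemma~\ref{Lem:cong2} and of the auxiliary conjugation statement consumes part of the initial $t$-adic margin, so one must verify that $p$ can be chosen large enough -- as a function of $l$, $m$, and $L$ -- to survive all of the $O(L)$ nested estimates. No new phenomenon appears beyond the relative commutator calculus already developed in \cite{RNZ1}, which is why the authors describe the reduction as immediate.
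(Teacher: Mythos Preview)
Your approach is correct and matches what the paper intends: the paper gives no detailed proof, only the remark that the lemma ``immediately'' follows from Lemma~\ref{Lem:cong2} via the Hall--Witt identity~(C3), and your outline (induction on $L$ via~(C1), base case via Hall--Witt, with~(\ref{eqn:RVZ1}) and~(\ref{eqn:RVZ3}) as the inputs) is precisely that.

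One simplification: your ``preliminary'' about conjugation of the target commutator subgroup by bounded products from $\FU^1\big(2n,\frac{A}{t^m},\frac{\Lambda}{t^m}\big)$ is already recorded as~(\ref{eqn:RVZ2}) in Lemma~\ref{Lem:cong2}; you can invoke it directly rather than re-derive it from~(\ref{eqn:RVZ1}). This also sidesteps the small wrinkle in your derivation, namely that~(\ref{eqn:RVZ1}) as stated conjugates $\FUnt{p}{I}$ rather than $\FUnT{p}{I}$, so applying it ``separately to each factor'' of $[a,b]$ requires an extra unwinding of the normal closure. With~(\ref{eqn:RVZ2}) in hand, the conjugations by $u$, by $[x,f]$, and by the outer elements in the Hall--Witt expansion are all absorbed in one stroke (iterated $O(L)$ times), and the bookkeeping you flag at the end goes through exactly as you describe.
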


In the following three Lemmas, as in Lemma~\ref{Lem:cong2}, all
the calculations take place in the fraction ring $(A_t,\Lambda_t)$
(see \S\ref{localization}). All the subgroups of $\GU(2n,A_t,\Lambda_t)$
used in the Lemmas, such as $\EU(2n, A,I, \Gamma)$ or $\GU(2n,J,\Delta)$
are in fact the homomorphic images of the similar subgroups in
$\GU(2n,A_t,\Lambda_t)$ under the natural homomorphism $A\rightarrow A_t$.
Since  lemmas such as Lemma~\ref{Lem:Habdank} and the generalized
commutator formula (Theorem~\ref{main_HVZ}) hold for these subgroups
in $\GU(2n,A,\Lambda)$, they also hold for their corresponding homomorphic
images in $\GU(2n,A_t,\Lambda_t)$.
\begin{Lem}\Label{Lem:EGG}
Let $(\FormR)$ be a form algebra, $(I,\Gamma)$ and $(J,\Delta)$ form
ideals of $(\FormR)$, $m$ an integer and $t\in R_0$. If
$x\in\FU^1(2n,\frac{I}{t^m},\frac{\Gamma}{t^m})$, $l$ is a given
integer, then for every integer $p\ge l+m$  we have
$$ [x,h]\in\GU(2n,t^l(JI+IJ),t^l(\Gamma\circ\Delta)), $$
\noindent
where $h\in\GU(2n,t^pJ,t^p\Delta)$.
\end{Lem}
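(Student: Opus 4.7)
The plan is to compute $[x,h]$ directly at the matrix level, working inside $\GU(2n,A_t,\Lambda_t)$ and then tracking both the entries and the quadratic-form obstruction to obtain the required containment in the principal congruence subgroup of level $\bigl(t^l(IJ+JI),\,t^l(\Gamma\circ\Delta)\bigr)$.

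Write $x=e+u$ and $h=e+v$. By the explicit formula for $T_{ij}(\xi)$, the matrix $u$ has at most two nonzero entries, with values in $t^{-m}I$ (resp.\ $t^{-m}\Gamma$ in the long-root case $i=-j$); in particular $u^2=0$, so $x^{-1}=e-u$. The matrix $v=h-e$ has entries in $t^pJ$, with those in the anti-diagonal positions $(k,-k)$ lying in $t^p\Delta$, and subject to the unitary relation $\bar v^{T}E+Ev+\bar v^{T}Ev=0$ inherited from $h\in\GU$. A direct expansion gives
$$
[x,h]\;=\;e+(uv-vu-uvu)\,h^{-1}.
$$

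For the ideal-level condition, the entries of $uv$ and of $vu$ lie in $t^{-m}I\cdot t^pJ+t^pJ\cdot t^{-m}I=t^{p-m}(IJ+JI)$, hence in $t^l(IJ+JI)$ since $p\ge l+m$. Post-multiplying by $h^{-1}$, whose entries are in $A$, preserves this containment because $IJ+JI$ is a two-sided ideal. The $uvu$ part is \emph{a priori} only in $t^{p-2m}(IJI)\subseteq t^{p-2m}(IJ+JI)$; however, by the explicit bookkeeping in \S\ref{sec5} (the proof of Lemma~\ref{Lem:GGG}) the specific entries of $v$ which actually occur in $uvu$ are related to other entries of $v$ by the unitary constraint on $h$, and hence can be rewritten as quadratic expressions in $v$, supplying an extra factor $t^{p}$ and yielding overall divisibility $t^{2p-2m}\subseteq t^{l}$ for $p\ge l+m$.

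For the form-parameter condition, one expands $f\bigl([x,h]w,[x,h]w\bigr)-f(w,w)$ for $w\in V$, exploits the fact that $x$ and $h$ individually preserve the $\Lambda$-quadratic form $q$ modulo $\Gamma/t^{m}$ and $t^{p}\Delta$ respectively, and invokes the Steinberg relation (R6) together with the definition $\Gamma\circ\Delta=\Gamma_{\min}(IJ+JI)+{}^{J}\Gamma+{}^{I}\Delta$ recalled in \S\ref{sec2}. The mixed terms of the form $f\bigl((x-1)w,(h-1)w\bigr)$ land in $t^{p-m}\,({}^{J}\Gamma+{}^{I}\Delta)$, while the genuinely quadratic contribution from $uvu$ is handled by the same cancellation argument as above, so the total discrepancy lies in $t^{l}(\Gamma\circ\Delta)$.

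The main obstacle is exactly the $uvu$-term, both at the ideal level and at the form-parameter level: naively it sits at denominator $t^{-2m}$, and reducing its effective level to $t^{l}$ requires invoking the unitary constraint on $h$ so as to trade one factor of $t^{-m}$ in $u$ against an additional factor of $t^{p}$ in $v$. This is precisely the refinement that allows one to replace $\Gamma_{\max}(IJ+JI)$ from the crude calculation with the finer symmetrised product $\Gamma\circ\Delta$, paralleling (and in fact relying on) the improvement from \cite[Lemma~22]{RNZ1} to Lemma~\ref{Lem:GGG} above.
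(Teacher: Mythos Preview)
Your overall strategy --- expand $[x,h]$ at the matrix level and track the ideal and form-parameter conditions separately --- matches the paper's, but two of your steps do not go through as written.

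First, your appeal to ``the explicit bookkeeping in the proof of Lemma~\ref{Lem:GGG}'' is misplaced: that proof proceeds via the \emph{stable} unitary group and Bass' sandwich classification (\cite[Theorem~5.4.10]{HO}), not via any entrywise computation, so it supplies no matrix-level cancellation you can import here. Second, your treatment of the form-parameter condition is only a sketch: invoking the Steinberg relation (R6) is irrelevant since $h$ is an arbitrary congruence-subgroup element, not an elementary transvection, and your assertion that the entries of $v$ occurring in $uvu$ ``can be rewritten as quadratic expressions in $v$'' via the unitary constraint on $h$ is not what actually happens --- the relevant antidiagonal entry $h_{-s,s}$ is \emph{not} quadratic in the off-diagonal entries of $h$.

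The paper's argument is more concrete and sidesteps these issues. Rather than carrying an abstract $h^{-1}$, it uses the unitary inverse formula to express $(h^{-1})_{-s,j}$ in terms of $\bar h_{-j,s}$, so that every entry of $g=[x,h]$ becomes an explicit polynomial in the $h_{ij}$ and $\alpha$. The ideal-level condition is then dispatched by reference to the $\GL$-computation in \cite[Lemma~13]{RHZZ2}, and the real work is the form-parameter condition: one computes $\sum_{1\le i\le n}\bar g_{ij}g_{-i,j}$ directly, splitting into the cases where $x$ is a long or short root element and where $j=-s$ or $j\neq -s$. The individual summands are then recognised as lying in $t^{?}\cdot{}^J\Gamma$, $t^{?}\cdot{}^I\Delta$, or $t^{?}\cdot\Gamma_{\min}(IJ+JI)$. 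Crucially, the terms of the dangerous $\alpha h_{-s,s}\alpha$ type are handled not by any hidden constraint on $h$, but because they occur in \emph{pairs} of the shape $\xi-\lambda\bar\xi$, which lands them in $\Gamma_{\min}(IJ+JI)$ by definition. That pairing --- not a quadratic rewriting --- is the mechanism you are missing.
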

\begin{proof}
Suppose that $x=T_{sk}(\a)$, $\a\in \frac{I}{t^m}$ for $s\ne-k$,
and $\a\in \lambda^{-(\e(s)+1)/2}\frac{\Gamma}{t^m}$ for $s=-k$. Let
\begin{eqnarray*}
g=[x,h].
\end{eqnarray*}
By a similar argument as used in  Lemma~13 in \cite{RHZZ2}, it
suffices to verify that there exists an integer $p$, such that
$$ \displaystyle\sum_{1\le i\le n}\bar g_{ij}g_{-i,j}\in
t^l(\Gamma\circ\Delta) $$
\noindent
for any given $j$ with $-n\le j\le n$. We divide the proof into 2
cases according to the type of $T_{sk}(\a)$, namely long or short
root. We provide a detailed calculation for the case of a long root
type element. The case of a short root type element is settled by a
similar calculation which will be omitted.
\par\smallskip
Case I. If $T_{sk}(\a)$ is a long root, i.e., $s=-k$ and
$\a\in\lambda^{-(\e(s)+1)/2}\frac{\Gamma}{t^m}$, then
\begin{eqnarray*}
g=[T_{s,-s}(\a),h]=T_{s,-s}(\a)\Bigl(e-\sum_{i,j}h_{i,s}\a\bar h_{-j,s}\Bigr),
\end{eqnarray*}
where $h_{ij} \in t^pI$.
\par
Let us have a closer look at the sum
$\displaystyle\sum_{1\le i\le n}\bar g_{ij}g_{-i,j}$.
When $j\ne-s$, we may, without loss of generality, assume that $s\ge 0$
and $j\ge 0$, and thus this sum can be rewritten in the form
\begin{multline*}
\sum_{1\le i\le n}\overline{ h_{i,s}\a\bar h_{-j,s}}h_{-i,s}\a\bar h_{-j,s}
-\lambda^{(\e(j)-\e(-s))/2} h_{-j,s}\a\bar h_{-j,s}+
\overline { \a h_{-s,s}\a\bar h_{-j,s}}h_{-s,s}\a\bar h_{-j,s}  \\
=\sum_{1\le i\le n}h_{-j,s}\lambda\bar\a
\bar h_{i,s} h_{-i,s}\a\bar h_{-j,s} -
h_{-j,s}\bar\lambda\a\bar h_{-j,s}+
h_{-j,s}\bar\a \bar h_{-s,s}\bar\a h_{-s,s}\a\bar h_{-j,s},
\end{multline*}
where the first summand belongs to $t^{4p-2m}({}^{I}\Delta)$, whereas
the second and the third ones belong to $t^{2p-m}({}^{J}\Gamma)$ and
$t^{4p-3m}({}^{J}\Gamma)$, respectively.

On the other hand, when $j=-s$, this sum equals
$$ \sum_{1\le i\le n}\overline{h_{is}\a\bar h_{ss}}h_{-i,s}\a\bar h_{ss}
-h_{ss}\bar\a\bar h_{ss}+\big(\bar\a-\overline{\a h_{-s,s}\a\bar h_{ss}})
(1-h_{-s,s}\a\bar h_{ss}\big), $$
where the first sum belongs to $t^{4p-2m}({}^{I}\Delta)$, while the rest equals
\begin{multline*}
x=-h_{ss}\bar\a\bar h_{ss}+(\bar\a-{h_{ss}\bar\a\bar
h_{-s,s}\bar\a})(1-h_{-s,s}\a\bar h_{ss})= \\
-h_{ss}\bar\a\bar h_{ss}+\bar\a-\bar\a h_{-s,s}\a\bar h_{ss}
-{h_{ss}\bar\a\bar h_{-s,s}\bar\a}+{ h_{ss}\bar\a\bar h_{-s,s}\bar\a}
h_{-s,s}\a\bar h_{ss}=\\
-(1+h_{ss}-1)\bar\a(1+\overline{h_{ss}-1})+\bar\a +
\Big(\lambda\bar\a h_{-s,s}\bar\a\bar h_{ss}-
{h_{ss}\bar\a\bar h_{-s,s}\bar\a}\Big)+
h_{ss}\bar\a\bar h_{-s,s}\bar\a h_{-s,s}\a\bar h_{ss}
\end{multline*}
where the two last summands belong to $t^{2p-2m}\Gamma_{\min}((IJ+JI))$ and to
$t^{4p-3m}({}^{J}\Gamma)$, respectively. Thus, for the left summands, one has
\begin{align*}
 -(1+h_{ss}-1)\bar\a(1&+\overline{h_{ss}-1})+\bar\a\\&
=-(h_{ss}-1)\bar\a+\lambda\a\overline{(h_{ss}-1)}-
(h_{ss}-1)\a\overline{(h_{ss}-1)}, \end{align*}
where the first summand also belongs to $t^{p-m}\Gamma_{\min}((IJ+JI))$,
whereas the second one belongs to $t^{2p-m}({}^{J}\Gamma)$, respectively.

Now by our assumption $p\ge l+m$, in both cases the desired sum belongs to
$t^l(\Gamma\circ\Delta)$, as claimed.
\end{proof}

\begin{Lem}\Label{Lem:FFEF}
Let $(\FormR)$ be a module finite form algebra, $(I,\Gamma)$, $(J,\Delta)$
and $(K,\Omega)$  form ideals of  $(\FormR)$, and   $t\in R_0$.  For
any given  integers $m, l$, there is a  sufficiently large integer $p$,
such that
\begin{multline}
\Big[\big[\FU(2n,t^pA,t^pI,t^p\Gamma),
\FU(2n,t^pA,t^pJ,t^p\Delta)\big],\big[\EU(2n,\FormR),
\FU^1(2n,\frac{K}{t^{m}},\frac{\Omega}{t^{m}})\big] \Big]\notag\\
\subseteq\Big[\big[\FU(2n,t^lA ,t^lI,t^l\Gamma),
\FU(2n,t^lA ,t^lJ,t^l\Delta)\big], \FU(2n,t^lA, t^lK,t^l\Omega)\Big].
\end{multline}
\end{Lem}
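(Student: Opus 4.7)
Set $X_\bullet := \FU(2n, t^\bullet A, t^\bullet I, t^\bullet \Gamma)$, define $Y_\bullet$ and $K_\bullet$ analogously for the ideals $(J, \Delta)$ and $(K, \Omega)$, let $U_\bullet := [X_\bullet, Y_\bullet]$, $W := \FU^1(2n, K/t^{m}, \Omega/t^{m})$, and $E := \EU(2n, \FormR)$. The target is $[U_p, [E, W]] \subseteq [U_l, K_l]$ for $p$ sufficiently large. The plan is to iterate the three subgroup lemma (Lemma~\ref{HW1}) to peel apart the nested commutator until its pieces can be absorbed by the relative commutator calculus (Lemmas~\ref{Lem:cong2}, \ref{Lem:RVZ4}, \ref{Lem:EGG}), and then reassemble the pieces into the target shape $[U_l, K_l]$ via one last Hall--Witt.

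Begin by applying Lemma~\ref{HW1} to the triple $(E, W, U_p)$, obtaining
\[
\big[[E, W], U_p\big] \subseteq \big[[E, U_p], W\big] \cdot \big[E, [W, U_p]\big].
\]
Consider the right factor first. Expanding $U_p = [X_p, Y_p]$ and applying Hall--Witt again yields
\[
[W, [X_p, Y_p]] \subseteq \big[[W, X_p], Y_p\big] \cdot \big[X_p, [W, Y_p]\big].
\]
By Lemma~\ref{Lem:cong2}, equation~(\ref{eqn:RVZ3}) (applied with roles $(J, \Delta) \mapsto (K, \Omega)$), for $p$ sufficiently large relative to $m$ and any chosen auxiliary level $l_0 \ge l$, both $[W, X_p] \subseteq [X_{l_0}, K_{l_0}]$ and $[W, Y_p] \subseteq [Y_{l_0}, K_{l_0}]$. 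A further application of Hall--Witt to $[[X_{l_0}, K_{l_0}], Y_p]$ splits it into the ``main'' piece $[[X_{l_0}, Y_p], K_{l_0}] \subseteq [U_l, K_l]$ (using $X_{l_0} \subseteq X_l$, $Y_p \subseteq Y_l$, $K_{l_0} \subseteq K_l$) and an ``error'' $[X_{l_0}, [K_{l_0}, Y_p]]$. The error is controlled by Lemma~\ref{Lem:Habdank} (placing $[K_{l_0}, Y_p]$ in a $\GU$-subgroup at level $t^{l_0+p}$), Theorem~\ref{main_HVZ}, and Lemma~\ref{Lem:EGG}, which together force it into a sufficiently deep subgroup to land inside $[U_l, K_l]$. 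The outer $E$-commutator in $[E, [W, U_p]]$ is then absorbed via one more application of Theorem~\ref{main_HVZ}.

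The left factor $[[E, U_p], W]$ is treated symmetrically: after a Hall--Witt expansion of $[E, U_p] = [E, [X_p, Y_p]]$ and an application of Theorem~\ref{main_HVZ} (which gives $[E, X_p] \subseteq \EU(2n, t^p I, t^p\Gamma)$), Lemma~\ref{genelm} and Theorem~\ref{Comgenerator} allow a decomposition of the generators that reduces matters to commutators of the type already handled, completing the argument. The main obstacle is the cumulative bookkeeping of levels: each Hall--Witt step introduces an error whose containment in $[U_l, K_l]$ must be separately verified, and the technically hardest errors are those of the form $[X_{l_0}, [K_{l_0}, Y_p]]$, which a priori lie only in high-level $\GU$- or $\EU$-subgroups. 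Here Lemma~\ref{Lem:EGG} (applied with a generator of $X_{l_0}$ in the role of the fractional-level transvection and $[K_{l_0}, Y_p]$ in the role of $h$) is precisely what pushes the commutator back into the specific target $[U_l, K_l]$ rather than merely into some ambient relative elementary subgroup; the integer $p$ must be chosen large enough to simultaneously accommodate every level loss along the chain.
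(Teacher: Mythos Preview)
Your plan has a structural gap at the very first step. Lemma~\ref{HW1} (the three subgroup lemma) requires that all three groups be \emph{normal} in some ambient group, but your $W=\FU^1(2n,K/t^m,\Omega/t^m)$ is not even a subgroup---it is just a set of single transvections---and the group it generates is not normal in anything containing $E$ and $U_p$. The same objection applies to your second Hall--Witt step with the triple $(W,X_p,Y_p)$. The paper avoids this by working elementwise: it fixes $x\in U_p$, $y\in E$, $z\in W$ and applies the identity (C3),
\[
\big[x,[y^{-1},z]\big]={}^{y^{-1}x}\big[[x^{-1},y],z\big]\cdot{}^{y^{-1}z}\big[[z^{-1},x],y\big],
\]
which produces explicit conjugating elements $y^{-1}x$ and $y^{-1}z$. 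These conjugates are not free; they are absorbed one by one using (\ref{eqn:RVZ2}) of Lemma~\ref{Lem:cong2}, and this absorption step is where the choice of $p$ is actually spent. Your outline never tracks these conjugates, so even if the subgroup-level inclusions you wrote were formally valid, you would not be able to conclude membership in the \emph{specific} target $[U_l,K_l]$.

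There is a second gap in how you invoke Lemma~\ref{Lem:EGG}. That lemma only delivers containment in a principal congruence subgroup $\GU(2n,t^{l'}(\ldots),t^{l'}(\ldots))$, not in $[U_l,K_l]$. In the paper's argument the passage from this $\GU$-level information back to the target requires a chain: first the commutator formula with $y\in E$ turns the $\GU$ into an $\EU$ at the same level, then Lemma~\ref{Lem:Habdank} converts $\EU(2n,t^{3p'}(K\circ(I\circ J)),\ldots)$ back into a nested $[\FU,\FU]$-commutator, and only then does (\ref{eqn:RVZ2}) absorb the outer conjugate. Your sketch collapses this into a single sentence and in particular never uses Lemma~\ref{Lem:Habdank} in the $\EU\subseteq[\FU,\FU]$ direction, which is the step that actually lands you in $[U_l,K_l]$ rather than merely in some ambient $\EU$. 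The appeal to Theorem~\ref{Comgenerator} for the ``left factor'' is likewise a red herring: the paper handles $[x^{-1},y]$ directly via the absolute commutator formula $[\GU(2n,t^{2p}(I\circ J),\ldots),E]=\EU(2n,t^{2p}(I\circ J),\ldots)$ followed by Lemma~\ref{Lem:Habdank}, with no need to decompose generators.
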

\begin{proof}
Let $x\in\big[\FU(2n,t^pA,t^pI,t^p\Gamma),\FU(2n,t^pA,t^pJ,t^p\Delta)]\big]$,
$y\in \EU(2n, \FormR)$ and $z\in \FU^1(2n,\frac{K}{t^{m}},\frac{\Omega}{t^{m}})$.
Then using  the Hall--Witt identity (C3), one obtain that
\begin{equation} \Label{eqn:99}
\big[x,[y^{-1},z]\big]=  {}^{y^{-1}x}\big[[x^{-1},y],z\big]\, \, \,
{}^{y^{-1}z}\big[[z^{-1},x],y\big].
\end{equation}
By our assumption and Lemma~\ref{Lem:Habdank}, we have
\begin{align*}
x\in \big[\FU(2n,t^pA,t^pI,t^p\Gamma),& \FU(2n,t^pA,t^pJ,t^p\Delta)]\big]\\
&\subseteq \big[\EU(2n,t^pI,t^p\Gamma), \EU(2n,t^pJ,t^p\Delta)]\big]\\
&\subseteq \GU(2n, t^{2p}(I\circ J), t^{2p}(\Gamma\circ\Delta)).
\end{align*}
Using the commutator formula,  we obtain that
\begin{align*}
[x^{-1},y]&\in\Big[\GU(2n,t^{2p}(I\circ J),t^{2p}(\Gamma\circ\Delta)),
\EU(2n,\FormR)\Big]\\
&=\EU(2n, t^{2p}(I\circ J), t^{2p}(\Gamma\circ\Delta))\\
&=\EU\big(2n,t^{2p}(I\circ J),
\Lambda \circ t^{2p}(\Gamma\circ\Delta)\big).
\end{align*}
\noindent
By Lemma~\ref{Lem:Habdank}, it follows that
\begin{align*}
\EU\big(2n,t^{2p}(I\circ J)&,\Lambda\circ
t^{2p}(\Gamma\circ\Delta)\big )\\
&\subseteq \Big[\FU(2n,t^pA,t^p\Lambda), \FU(2n, t^p(I\circ J),
t^p(\Gamma\circ\Delta)\Big]\\
&=\FU(2n,t^pA,  t^p(I\circ J), t^p(\Gamma\circ\Delta)).
\end{align*}
\noindent
Therefore Lemma~\ref{Lem:RVZ4} implies that for any given $p'$
there exists an integer $p$
such that
\begin{align*}
\big[[x^{-1},y],z\big]&\in\ {}^{y^{-1}x}\Big[\FU(2n,t^p A,
t^p(I\circ J), t^p(\Gamma\circ\Delta)),
\FU^1(2n,\frac{K}{t^{m}},\frac{\Omega}{t^{m}})\Big]\\
&\subseteq\ {}^{y^{-1}x}\Big[ \FU(2n, t^{p'}A, t^{p'}(I\circ J),
t^{p'}(\Gamma\circ\Delta)), \FU(2n, t^{p'}A, t^{p'}K, t^{p'}\Omega)\Big],
\end{align*}
\noindent
where by definition, ${y^{-1}x}\in \EU(2n,\frac{A}{t^0}, \frac{\Lambda}{t^0})$.
By (\ref{eqn:RVZ2}) in Lemma~\ref{Lem:cong2}, for any given $l$, we may find
a sufficiently large $p'$ such that
\begin{multline*}
{}^{y^{-1}x}\Big[\FU(2n,t^{p'}A,t^{p'}(I\circ J),
t^{p'}(\Gamma\circ\Delta)),
\FU(2n, t^{p'}A, t^{p'}K, t^{p'}\Omega)\Big]\\
\subseteq\Big[\FU(2n,t^{2l}A,t^{2l}(I\circ J),
t^{2l}(\Gamma\circ\Delta)),
\FU(2n, t^{2l}A, t^{2l}K, t^{2l}\Omega)\Big].
\end{multline*}
\noindent
Now applying Lemma~\ref{Lem:Habdank} again, we have
\begin{align*}
{}^{y^{-1}x}\big[[x^{-1},y],z\big] &\in
\Big[ \FU(2n, t^{2l}A, t^{2l}(I\circ J), t^{2l}(\Gamma\circ\Delta)),
\FU(2n, t^{2l}A, t^{2l}K, t^{2l}\Omega)\Big]\\
&\subseteq
\Big[ \big[ \FU(2n, t^{l}A, t^lI, t^l\Gamma),
\FU(2n, t^lA, t^lJ, t^l\Delta)\big],
\FU(2n, t^{2l}A, t^{2l}K, t^{2l}\Omega)\Big]\\
&\subseteq\Big[ \big[ \FU(2n, t^{l}A, t^lI, t^l\Gamma),
\FU(2n, t^lA, t^lJ, t^l\Delta)\big], \FU(2n, t^{l}A, t^{l}K, t^{l}\Omega)\Big].
\end{align*}
\noindent
This proves the first factor of (\ref{eqn:99}) satisfies the lemma.
\par
For the second factor, using Lemma~\ref{Lem:Habdank}
\begin{align*}
[z^{-1},x] &\in \Big[\FU^1(2n,\frac{K}{t^{m}},\frac{\Omega}{t^{m}}),
[\FU(2n,t^pA,t^pI,t^p\Gamma), \FU(2n,t^pA,t^pJ,t^p\Delta)]\Big]\\
&\subseteq \Big[\FU^1(2n,\frac{K}{t^{m}},\frac{\Omega}{t^{m}}),
\GU(2n,t^{2p}(I\circ J),t^{2p}(\Gamma\circ\Delta))]\Big].
\end{align*}
\noindent
Now applying Lemma~\ref{Lem:EGG}, for any given integers $l$ and $m$,
there exists an integer $p$ such that
\begin{align*}
\Big[\FU^1(2n,\frac{K}{t^{m}},\frac{\Omega}{t^{m}}),
&\GU(2n,t^p(I\circ J),t^p(\Gamma\circ\Delta))]\Big]\\
&\subseteq \GU\big(2n, t^{3p'}(K\circ (I\circ J))\big),
t^{3p'}(\Omega\circ(\Gamma\circ\Delta))).
\end{align*}
\noindent
Again by the commutator formula, we obtain
\begin{align*}
\big[[z^{-1},x],y\big]&\in\Big[\GU\big(2n, t^{3p'}(K\circ (I\circ J))\big),
t^{3p'}(\Omega\circ(\Gamma\circ\Delta)) ), \EU(2n, \FormR)]\\
&=\EU\big(2n, t^{3p'}(K\circ (I\circ J))\big),
t^{3p'}(\Omega\circ(\Gamma\circ\Delta)) ).
\end{align*}
\noindent
Applying Lemma~\ref{Lem:Habdank} twice, we get
\begin{align*}
\EU\big(2n, t^{3p'}(K\circ &(I\circ J)),
t^{3p'} (\Omega\circ(\Gamma\circ\Delta) )\big)\\
&\subseteq \Big[\big[\FU(2n,t^{p'}I,t^{p'}\Gamma),
\FU(2n, t^{p'}J,t^{p'}\Delta)\big],\FU(2n, t^{p'}K,t^{p'}\Omega)\Big].
\end{align*}
\noindent
Finally, we have
\begin{multline*}
{}^{y^{-1}z}\big[[z^{-1},x],y\big]\in\ {}^{y^{-1}z}\Big[
\big[\FU(2n,t^{p'}I,t^{p'}\Gamma), \FU(2n, t^{p'}J,t^{p'}\Delta)\big],
\FU(2n, t^{p'}K,t^{p'}\Omega)
\Big]\\
=\Big[
\big[\ {}^{y^{-1}z}\FU(2n,t^{p'}I,t^{p'}\Gamma),\
{}^{y^{-1}z}\FU(2n, t^{p'}J,t^{p'}\Delta)\big], \
{}^{y^{-1}z}\FU(2n, t^{p'}K,t^{p'}\Omega)\Big].
\end{multline*}
\noindent
Now applying (\ref{eqn:RVZ2}) in Lemma~\ref{Lem:cong2} to every component of
the commutator above, we may find a sufficiently
large $p'$ such that for any given $l$,
\begin{multline*}
\Big[
\big[\ {}^{y^{-1}z}\FU(2n,t^{p'}I,t^{p'}\Gamma),\
{}^{y^{-1}z}\FU(2n, t^{p'}J,t^{p'}\Delta)\big],\
{}^{y^{-1}z}\FU(2n, t^{p'}K,t^{p'}\Omega)
\Big]\\
\subseteq\Big[\big[\FU(2n,t^lA ,t^lI,t^l\Gamma),
\FU(2n,t^lA ,t^lJ,t^l\Delta)\big], \FU(2n,t^lA, t^lK,t^l\Omega)\Big].
\end{multline*}
\noindent
This finishes the proof.
\end{proof}


\section{Main lemma on triple commutators}\label{sec8}

The following lemma is crucial for proving the main result, i.e.,
Theorem~\ref{main_triple} of this paper.

\begin{Lem}\Label{Lem:08}
Let $(\FormR)$ be a module finite form algebra, $(I,\Gamma)$,
$(J,\Delta)$ and $(K,\Omega)$  form ideals of  $(\FormR)$, and
$t\in R_0$. For any given $e_2\in \EU(2n,K_t, \Omega_t)$ and
integer $l$, there is a  sufficiently large integer $p$, such that
\begin{equation}\label{eqn:l1}
[e_1,e_2]\in \Big[\big[\EU(2n,t^lI,t^l\Gamma), \EU(2n,t^lJ,t^l\Delta)\big],
\EU(2n,t^lK,t^l\Omega)\Big],
\end{equation}
where  $e_1\in [\FU^1(2n,t^pI,t^p\Gamma), \EU(2n, J,\Delta)]$.
\end{Lem}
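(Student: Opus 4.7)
The plan is to apply the Hall--Witt identity to $[[x_1, y_1], z]$, where $e_1 = [x_1, y_1]$ and $z$ is a single generator of $\EU(2n, K_t, \Omega_t)$, and to bound each of the two resulting triple commutators through a cascade of applications of the relative commutator calculus of Section~\ref{sec7}. Since $e_2 \in \EU(2n, K_t, \Omega_t)$ is a fixed element, it is expressible as a product of finitely many elementary transvections of the localized relative elementary subgroup, all with $t$-denominators bounded by a common $t^m$. By repeated use of (C1$^+$), combined with (C4) and (\ref{eqn:RVZ1}) to absorb the implicit conjugators into the $x_1$-side (at the cost of decreasing $p$ by a bounded amount), we reduce to the case where $e_2 = z \in \FU^1(2n, K/t^m, \Omega/t^m)$; similarly, (C2$^+$) reduces $e_1$ to a single commutator $[x_1, y_1]$ with $x_1 \in \FU^1(2n, t^p I, t^p \Gamma)$ and $y_1 \in \EU(2n, J, \Delta)$.

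Applying the Hall--Witt identity (C3) with $(u, v, w) = (x_1^{-1}, y_1, z)$, we express
\[
{}^{x_1^{-1}}\big[[x_1, y_1], z\big]
= \big({}^{y_1}[[y_1^{-1}, z], x_1^{-1}]\big)^{-1}
\cdot \big({}^{z}[[z^{-1}, x_1^{-1}], y_1]\big)^{-1}.
\]
It then suffices to show that each of the two inner triple commutators on the right-hand side lies in the target triple commutator $\big[[\EU(2n, t^l I, t^l \Gamma), \EU(2n, t^l J, t^l \Delta)], \EU(2n, t^l K, t^l \Omega)\big]$ for $p$ sufficiently large; the outer conjugations by $x_1^{-1}$, $y_1$, $z$ are reabsorbed via (\ref{eqn:RVZ1}) and analogous arguments.

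For the factor $[[z^{-1}, x_1^{-1}], y_1]$: Lemma~\ref{Lem:EGG} applied to the inner commutator yields $[z^{-1}, x_1^{-1}] \in \GU(2n, t^{l'}(K \circ I), t^{l'}(\Omega \circ \Gamma))$ for $p \ge l' + m$; commutating with $y_1 \in \EU(2n, J, \Delta)$ and invoking the relative commutator formula (Theorem~\ref{main_HVZ}) converts this to an element of $[\EU(2n, t^{l'}(K \circ I), \ldots), \EU(2n, J, \Delta)]$. Using Lemma~\ref{Lem:Habdank} to factor $\EU$ of a symmetrised product through a commutator of two $\EU$'s, followed by a rearrangement via the three subgroup lemma (Lemma~\ref{HW1}), places the result inside the target, provided $l'$ is chosen large enough compared to $l$. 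For the factor $[[y_1^{-1}, z], x_1^{-1}]$: this has precisely the structure addressed by Lemma~\ref{Lem:FFEF} with the roles of the ideals $(I, J, K)$ suitably permuted; applying Lemma~\ref{Lem:FFEF} followed by a further three-subgroup rearrangement places the result in the target.

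The main obstacle is the careful coordination of the $t$-exponents through this cascade of reductions. The parameters must be chosen in the correct order: first fix $l$, then pick auxiliary bounds $l' \gg l$ (for the appeals to Lemma~\ref{Lem:Habdank} and to the three-subgroup rearrangements), and finally $p \gg l', m$ to validate all the applications of Lemmas~\ref{Lem:cong2}, \ref{Lem:EGG}, and \ref{Lem:FFEF}. A secondary difficulty is the non-associativity of the symmetrised product $\circ$ of form ideals, which forces one to check explicitly that the three-subgroup rearrangements land in the prescribed bracket ordering $[[\EU(I), \EU(J)], \EU(K)]$ rather than in some cyclic permutation.
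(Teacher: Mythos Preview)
Your Hall--Witt decomposition is set up correctly, but the analysis of the two resulting factors does not go through.

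For the factor $[[y_1^{-1},z],x_1^{-1}]$: the claim that Lemma~\ref{Lem:FFEF} applies, even after permuting the ideals, is wrong. That lemma requires the \emph{inner} bracket to lie in $[\FU(2n,t^pA,t^pI'),\FU(2n,t^pA,t^pJ')]$, i.e.\ both factors must carry a high power of $t$, while the \emph{outer} factor is a commutator $[\EU(2n,\FormR),\FU^1(2n,K'/t^{m},\ldots)]$. In your expression the only high power of $t$ sits in $x_1\in\FU^1(2n,t^pI,t^p\Gamma)$, which is outside the inner bracket; inside you have $y_1\in\EU(2n,J,\Delta)$ (no $t$-power) against $z\in\FU^1(2n,K/t^m,\Omega/t^m)$ (a denominator). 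No relabelling of $(I,J,K)$ repairs this structural mismatch. The same problem recurs in the first factor: after Lemma~\ref{Lem:EGG} and Theorem~\ref{main_HVZ} you land in $[\EU(2n,t^{l'}(K\circ I),\ldots),\EU(2n,J,\Delta)]$, and the three-subgroup lemma then produces terms such as $\big[[\EU(2n,t^{?}I,\ldots),\EU(2n,J,\Delta)],\EU(2n,t^{?}K,\ldots)\big]$ in which the $J$-slot never acquires a $t^l$. Since the conclusion demands $\EU(2n,t^lJ,t^l\Delta)$ in the middle, this inclusion fails.

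The paper's proof supplies exactly the missing idea: before any Hall--Witt manoeuvre, it rewrites the transvection $z=T_{ij}(\gamma/t^m)$ itself as a product of commutators $\big[\FU^1(2n,t^pA,t^p\Lambda),\FU^1(2n,K/t^{m'},\Omega/t^{m'})\big]$ via the Steinberg relations (R4) and (R6). This manufactures an auxiliary high $t$-power on the absolute side, so that the outer factor now genuinely has the shape $[\EU(2n,\FormR),\FU^1(2n,K/t^{m'},\ldots)]$ required by Lemma~\ref{Lem:FFEF}. Simultaneously, (\ref{eqn:RVZ3}) is used to upgrade $[\FU^1(2n,t^pI,\ldots),\FU^1(2n,J,\ldots)]$ to $[\FU(2n,t^{p'}A,t^{p'}I,\ldots),\FU(2n,t^{p'}A,t^{p'}J,\ldots)]$, which is how a $t$-power is forced onto the $J$-side. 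Only after both of these preparations does Lemma~\ref{Lem:FFEF} apply; the Hall--Witt identity is used inside the proof of that lemma, not at the top level as in your outline.
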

\begin{proof}
For any given $e_1 \in [\FU^1(2n,t^pI,t^p\Gamma), \EU(2n, J,\Delta)]$
and $e_2\in \EU(2n, K_t,\Omega_t)$, one may find some positive
integers $m$, $L$ and $S$, such that
$$ e_1\in [\FU^1(2n,t^pI,t^p\Gamma), \FU^{S}(2n,A, J,\Delta)] $$
\noindent
and
$$ e_2\in \FU^L\big(2n,\frac{A}{t^m},\frac{K}{t^m}, \frac\Omega{t^m}\big). $$


Applying the identity (C1$^+$) and repeated application of
(\ref{eqn:RVZ1}) in Lemma~\ref{Lem:cong2}, we reduce the problem to show that
\begin{multline*}
\Big[[\FU^1(2n,t^pI,t^p\Gamma), \FU^S(2n,A,J,\Delta)],
{}^cT_{i,j}(\frac{\gamma}{t^m})\Big] \subseteq\\
\Big[\big[\EU(2n,t^lI,t^l\Gamma), \EU(2n,t^lJ,t^l\Delta)\big],
\EU(2n,t^lK,t^l\Omega)\Big],
\end{multline*}
\noindent
where $c\in\FU^1(2n,\frac{A}{t^m},\frac{\Lambda}{t^m})$ and
$T_{i,j}(\frac{\gamma}{t^m})\in \EU(2n,\frac{K}{t^m},\frac{\Omega}{t^m})$.
\par
We claim that for any given integer $p$, there exists some integer
$m'$ such that  any elementary root element $T_{i,j}(\frac{\gamma}{t^m})$
can be further decomposed as a product
$$ \Big[\FU^1(2n, {t^pA}, {t^p} \Lambda),
\FU^1(2n,\frac{K}{t^{m'}},\frac{\Omega}{t^{m'}})\Big]
\Big[\FU^1(2n, {t^pA}, {t^p} \Lambda),
\FU^1(2n,\frac{K}{t^{m'}},\frac{\Omega}{t^{m'}})\Big]. $$
\noindent
Suppose that $T_{i,j}(\frac{\gamma}{t^m})$ is a short root.
Let $k\ne\pm i, \pm j$. Then by (R4), we have
$$ T_{i,j}(\frac{\gamma}{t^{m}})=
\big[T_{i,k}(t^p), T_{k,j}( \frac{\gamma}{t^{m-p}})], $$
\noindent
which satisfies our claim.
\par
Suppose that $T_{i,j}(\frac{\gamma}{t^m})$ is a long root.
Let $k\ne\pm i $. Using a variation of (R6), we get
\begin{eqnarray*}
T_{i,-i}(\frac{\gamma}{t^m})&=&T_{i,-i}(t^{p}\frac{\gamma}{t^{m-2p}}t^p)\\
&=&T_{i,-k}(-t^p\lambda^{-(\ep(k)-\ep(i))/2}\frac{\gamma}{t^{m-2p}})
\Big[T_{i,k}(t^p),T_{k,-k}(\lambda^{-(\ep(k)-\ep(i))/2}\frac{\gamma}{t^{m-2p}})\Big].
\end{eqnarray*}
\noindent
By the previous paragraph, we have
$$ T_{i,-k}(-\lambda^{-(\ep(k)-\ep(i))/2}\frac{\gamma}{t^{m-p}})\in
\Big[\FU^1(2n, {t^pA}, {t^p} \Lambda),\FU^1(2n,\frac{K}{t^{m'}},
\frac{\Omega}{t^{m'}})\Big]. $$
\noindent
This proves the claim.
\par
Together with he identity  (C1$^+$)  and (\ref{eqn:RVZ3}) in
Lemma~\ref{Lem:cong2}, the claim above allows us further reduce
the proof  to show that
\begin{align*}
\Big[\big[\FU^1(2n,t^pI,t^p\Gamma),\FU^1(2n,&J,\Delta)\big],\
{}^c\big[\FU^1(2n, {t^pA}, {t^p} \Lambda),
\FU^1(2n,\frac{K}{t^{m'}},\frac{\Omega}{t^{m'}})\big]\Big]   \\
&\subseteq \Big[\big[\EU(2n,t^lI,t^l\Gamma), \EU(2n,t^lJ,t^l\Delta)\big],
\EU(2n,t^lK,t^l\Omega)\Big].
\end{align*}
\noindent
Clearly, $\FU^1(2n,J,\Delta)=\FU^1(2n,\frac {J}{t^0}, \frac\Delta{t^0})$.
By (\ref{eqn:RVZ3}) in Lemma~\ref{Lem:cong2}, for any given $p'$
we have an integer $p$ such that
$$ \big[\FU^1(2n,t^pI,t^p\Gamma), \FU^1(2n,J,\Delta)\big]\subseteq
\big[\FU(2n,t^{p'}A ,t^{p'}I,t^{p'}\Gamma),
\FU(2n,t^{p'}A ,t^{p'}J,t^{p'}\Delta)\big]. $$
\noindent
Therefore, we obtain
\begin{align*}
\Big[\big[\FU^1&(2n,t^pI,t^p\Gamma), \FU^S(2n,A,J,\Delta)\big],\
{}^c\big[\FU^1(2n, {t^pA}, {t^p} \Lambda),
\FU^1(2n,\frac{K}{t^{m'}},\frac{\Omega}{t^{m'}})\big]\Big]\\
&\subseteq
\Big[\big[\FU(2n,t^{p'}A ,t^{p'}I,t^{p'}\Gamma),
\FU(2n,t^{p'}A ,t^{p'}J,t^{p'}\Delta)\big],\\
&\hskip 2truein
{}^c\big[\FU^1(2n, {t^pA}, {t^p} \Lambda),
\FU^1(2n,\frac{K}{t^{m'}},\frac{\Omega}{t^{m'}})\big]\Big]\\
&\subseteq
\ {}^c\Big[\big[\FU(2n,t^{p'}A ,t^{p'}I,t^{p'}\Gamma),
\FU(2n,t^{p'}A ,t^{p'}J,t^{p'}\Delta)\big]^c,\\
&\hskip 2truein\big[\FU^1(2n, {t^pA}, {t^p} \Lambda),
\FU^1(2n,\frac{K}{t^{m'}},\frac{\Omega}{t^{m'}})\big]\Big].\\
\end{align*}
\noindent
Applying (\ref {eqn:RVZ2}) in Lemma~\ref{Lem:cong2}, we find
an integer $p'$ such that
\begin{align*}
\big[\FU(2n,t^{p'}A ,t^{p'}I,t^{p'}\Gamma),
&\FU(2n,t^{p'}A ,t^{p'}J,t^{p'}\Delta)\big]^c\\
&\subseteq \big[\FU(2n,t^{p''}A ,t^{p''}I,t^{p''}\Gamma),
\FU(2n,t^{p''}A ,t^{p''}J,t^{p''}\Delta)\big]
\end{align*}
for any given integer $p''$.
Thanks to Lemma~\ref{Lem:FFEF}, for any given $l'$, we find
an integer $p''$ such that
\begin{align*}
\Big[\big[&\FU(2n,t^{p''}A ,t^{p''}I,t^{p''}\Gamma),
\FU(2n,t^{p''}A ,t^{p''}J,t^{p''}\Delta)\big],\\
&\hskip 2truein
\big[\FU^1(2n, {t^pA}, {t^p} \Lambda),
\FU^1(2n,\frac{K}{t^{m'}},\frac{\Omega}{t^{m'}})\big]\Big]\\
&\subseteq \ {}^c\Big[\big[\FU(2n,t^{l'}A ,t^{l'}I,t^{l'}\Gamma),
\FU(2n,t^{l'}A ,t^{l'}J,t^{l'}\Delta)\big],
\FU(2n,t^{l'}A, t^{l'}K,t^{l'}\Omega)\Big]\\
&=\Big[\big[\ {}^c\FU(2n,t^{l'}A ,t^{l'}I,t^{l'}\Gamma), \
{}^c\FU(2n,t^{l'}A ,t^{l'}J,t^{l'}\Delta)\big], \
{}^c\FU(2n,t^{l'}A, t^{l'}K,t^{l'}\Omega)\Big].
\end{align*}
\noindent
Applying (\ref{eqn:RVZ1}) in Lemma~\ref{Lem:cong2} to each component
of the commutator above, we may find a sufficiently large integer $l'$
such that for any given $l$
\begin{align*}
\Big[\big[&\ {}^c\FU(2n,t^{l'}A ,t^{l'}I,t^{l'}\Gamma), \
{}^c\FU(2n,t^{l'}A ,t^{l'}J,t^{l'}\Delta)\big], \
{}^c\FU(2n,t^{l'}A, t^{l'}K,t^{l'}\Omega)\Big]\\
&\subseteq
\Big[\big[\EU(2n,t^lI,t^l\Gamma), \EU(2n,t^lJ,t^l\Delta)\big],
\EU(2n,t^lK,t^l\Omega)\Big].
\end{align*}
\noindent
Hence,
$$ [e_1,e_2]\in\Big[\big[\EU(2n,t^lI,t^l\Gamma),
\EU(2n,t^lJ,t^l\Delta)\big], \EU(2n,t^lK,t^l\Omega)\Big], $$
\noindent
which finishes the proof.
\end{proof}


\section{Proof of Theorem~\ref{main_triple}}\label{sec9}

Now we are all set to complete the proof of the {\it triple}
commutator formula, Theorem~\ref{main_triple}.
\par
The functors $\EU_{2n}$ and $\GU_{2n}$ commute with direct limits.
By~\S\ref{sub:1.3}, one reduces the proof  to the case where $A$ is finite
over $R_0$ and $R_0$ is Noetherian.
\par
By the relative standard commutator formula, Theorem~\ref{main_HVZ}, we have
\begin{equation*}
\big[\EU(2n,I,\Gamma),\GU(2n,J,\Delta)\big]=
\big[\EU(2n,I,\Gamma),\EU(2n,J,\Delta)\big].
\end{equation*}
\par
Thus, to prove Theorem~\ref{main_triple} it suffices to prove the following equality
\begin{multline*}
\Big[\big[\EU(2n,I,\Gamma),\EU(2n,J,\Delta)\big],\GU(2n,K,\Omega)\Big]=\\
\Big[\big[\EU(2n,I,\Gamma),\EU(2n,J,\Delta)\big],\EU(2n,K,\Omega)\Big].
\end{multline*}
\noindent
By Theorem~\ref{Comgenerator} the mixed commutator subgroup
$[\EU(2n,I,\Gamma),\EU(2n,J,\Delta)\big]$ is generated by the
conjugates in $\EU(2n,\FormR)$ of the following types of elements
\begin{equation}\Label{ggsswwi}
e=\big[T_{ji}(\alpha),{}^{T_{ij}(a)}T_{ji}(\beta)\big],\qquad
e=\big[T_{ji}(\alpha),T_{ij}(\beta)\big],
\qquad \text{ and} \qquad e=T_{ij}(\xi),
\end{equation}
where $\alpha\in(I,\Gamma)$, $\beta\in(J,\Delta)$,
$\xi\in(I,\Gamma)\circ(J,\Delta)$ and $a\in(A,\Lambda)$.
\par
We claim that for any $g\in \GU(2n,K,\Omega)$,
\begin{equation}\label{jhgfsap}
\big[e,g]\in\Big[\big[\EU(2n,I,\Gamma),\EU(2n,J,\Delta)\big],\EU(2n,K,\Omega)\Big].
\end{equation}

Let  $g\in \GU(2n,K,\Omega)$.
For any maximal ideal $\gm\in\Max(R_0)$, the form ring
$(A_\gm,\Lambda_\gm)$ contains $(K_\gm,\Omega_\gm)$ as a form ideal.
Consider the localisation homomorphism $F_\gm:A\rightarrow A_\gm$ which induces
homomorphisms on the level of unitary groups,
$$ F_\gm:\GU(2n,A,\Lambda)\rightarrow\GU(2n,A_\gm,\Lambda_\gm), $$
\noindent
and
$$ F_\gm:\GU(2n,K,\Omega)\rightarrow\GU(2n,K_\gm,\Omega_\gm). $$
\par
Therefore, for $g\in\GU(2n,K,\Omega)$,
$F_\gm(g)\in\GU(2n,K_\gm,\Omega_\gm)$.
Since $A_\gm$ is module finite over the local ring $R_\gm$, $A_\gm$ is semi-local
\cite[III(2.5), (2.11)]{Bass1}, therefore its stable rank is $1$. It follows by
(see \cite[9.1.4]{HO}) that,
$$ \GU(2n,K_\gm,\Omega_\gm)=\EU(2n,K_\gm,\Omega_\gm)\GU(2,K_\gm,\Omega_\gm). $$
\par
Thus, $F_\gm(g)$ can be decomposed as $F_\gm(g)=\ep h$, where
$\ep\in\EU(2n,K_\gm,\Omega_\gm)$ and
$h\in\GU(2,K_\gm,\Omega_\gm)$ is a
$2\times2$ matrix embedded in $\GU(2n,K_\gm,\Omega_\gm)$ and this
embedding can be arranged modulo $\EU(2n,K_\gm,\Omega_\gm)$.
\par
Now, by (\ref{sub:1.3}), we may reduce the problem to the case $A_t$ with
$t\in R_0\backslash \gm$. Namely,
\begin{equation}\label{ppooi}
F_t(g)=\ep h,
\end{equation}
where $\ep\in\EU(2n,K_t,\Omega_t)$ and $h\in\GL(2,K_t,\Omega_t)$.

For any maximal ideal $\gm\lhd R_0$, choose  $t_{\gm} \in R_0\backslash \gm$
as above and an arbitrary positive integer $p_{\gm}$. (We will later
choose $p_{\gm}$ according to Lemma~\ref{Lem:08}.)
Since the collection of all $\{t_\gm^{p_\gm} \mid \gm \in \max(R_0) \}$
is not contained in any maximal ideal, we may find a finite number of
$t_{\gm_s}^{p_s}\in R_0\backslash \gm_s$ and $x_s\in R_0$, $s=1,\dots,k$,
such that
$$ \sum_{s=1}^k t_{\gm_s}^{p_s}x_s=1. $$
\par
In order to prove~(\ref{jhgfsap}), first we consider the generators of
the first kind in~(\ref{ggsswwi}), namely
\[e=\Big[T_{ji}(\alpha),{}^{T_{ij}(a)}T_{ji}(\beta)\Big].\]
Consider
\begin{multline*}
e=\big[T_{ji}(\alpha),{}^{T_{ij}(a)}T_{ji}(\beta)\big]=
\Bigg[T_{ji}\bigg(\sum_{s=1}^kt_{\gm_s}^{p_s}x_s\alpha\bigg),
{}^{T_{ij}(a)}T_{ji}(\beta)\Bigg]=\\
\Bigg[\prod_{s=1}^kT_{ji}(t_{\gm_s}^{p_s}x_s\alpha),
{}^{T_{ij}(a)}T_{ji}(\beta)\Bigg].
\end{multline*}
\noindent
By Identity (C$2^+$), the element
$e=\displaystyle \Bigg[\prod_{s=1}^kT_{ji}(t_{\gm_s}^{p_s}x_s\alpha),
{}^{T_{ij}(a)}T_{ji}(\beta)\Bigg]$ can be written as a product of
the following form:
\begin{multline}\label{hfsis}
e={}^{T_k}\Big[T_{ji}(t_{\gm_k}^{p_k}x_k\alpha),
{}^{T_{ij}(a)}T_{ji}(\beta)\Big]\cdot
{}^{T_{k-1}}\Big[T_{ji}(t_{\gm_{k-1}}^{p_{k-1}}x_{k-1}\alpha),
{}^{T_{ij}(a)}T_{ji}(\beta)\Big]\cdot\\ \ldots \cdot
{}^{T_1}\Big[T_{ji}(t_{\gm_1}^{p_1}x_1\alpha),
{}^{T_{ij}(a)}T_{ji}(\beta)\Big],
\end{multline}
\noindent
where $T_1,T_2,\ldots, T_k\in \EU(2n,A,\Lambda)$. Note that from
(C$2^+$) it is clear that all $T_s$, $s=1,\dots,k$, are products
of elementary matrices of the form $T_{ji}(a)$. Thus $T_s=T_{ji}(a_s)$,
where $a_s\in A$ and $s=1,\dots,k$, which clearly commutes with
$T_{ji}(x)$ for any $x\in A$. So the commutator~(\ref{hfsis}) is equal to
\begin{multline}\label{hfsis1}
e=\Big[T_{ji}(t_{\gm_k}^{p_k}x_k\alpha),
{}^{T_k}{}^{T_{ij}(a)}T_{ji}(\beta)\Big]\cdot
\Big[T_{ji}(t_{\gm_{k-1}}^{p_{k-1}}x_{k-1}\alpha),
{}^{T_{k-1}}{}^{T_{ij}(a)}T_{ji}(\beta)\Big]\cdot\\
\ldots\cdot \Big[T_{ji}(t_{\gm_1}^{p_1}x_1\alpha),
{}^{T_1}{}^{T_{ij}(a)}T_{ji}(\beta)\Big].
\end{multline}


Using (C$2^+)$ and in view of~(\ref{hfsis1}) we obtain that $[e,g]$ is a
product of the conjugates in $\EU(2n,A,\Lambda)$ of
\[w_s=\bigg[\Big[T_{ji}(t_{\gm_s}^{p_{s}}x_{s}\alpha),
{}^{T_{ji}(a_s)T_{ij}(a)}T_{ji}(\beta)\Big],g\bigg],\]
\noindent
where $a_s \in A$ and $s=1,\ldots,k$.
\par
For each $s=1,\dots,k$, consider $\theta_{t_{\gm_s}}(w_s)$ which we
still write as $w_s$ but keep in mind that this image
is in $\GU(2n, A_{t_{\gm_s}},\Lambda_{t_{\gm_s}})$.
\par
Note that all $\Big[T_{ji}(t_{\gm_s}^{p_{s}}x_{s}\alpha),
{}^{T_{ji}(a_s)T_{ij}(a)}T_{ji}(\beta)\Big]$, $s=1,\ldots,k$,
differ from the identity matrix at only the rows $\pm i,\pm j$
and in the columns $\pm i,\pm j$. Since $n>2$, we can choose an $h$
in the decomposition~(\ref{ppooi}) so that it commutes with
\[\Big[T_{ji}(t_{\gm_s}^{p_{s}}x_{s}\alpha),
{}^{T_{ji}(a_s)T_{ij}(a)}T_{ji}(\beta)\Big].\]
\noindent
This allows us to reduce $\theta_{t_{\gm_s}}(w_s)$  to
$$ \bigg[\Big[T_{ji}(t_{\gm_s}^{p_{s}}x_s\alpha),
{}^{T_{ji}(a_s)T_{ij}(a)}T_{ji}(\beta)\Big], \ep\bigg], $$
\noindent
where $\ep \in E_n(A_{t_{\gm_s}},K_{t_{\gm_s}})$.
By Lemma~\ref{Lem:08}, for any given $l_{s}$, there is a sufficiently
large $p_{s}$, $s=1,\dots k$, such that
\begin{multline*}
\bigg[\Big[T_{ji}(t_{\gm_s}^{p_{s}}x_s\alpha),
{}^{T_{ji}(a_s)T_{ij}(a)}T_{ji}(\beta)\Big], \ep\bigg]
\in \\
\Big[\big[\EU(2n,t^{l_s}I,t^{l_s}\Gamma),
\EU(2n,t^{l_s}J,t^{l_s}\Delta)\big],\EU(2n,t^{l_s}K,t^{l_s}\Omega)\Big].
\end{multline*}
\noindent
Let us choose $l_{s}$ to be large enough so that by Lemma~\ref{Lem:03}
the restriction of
$$ \theta_{t_{\gm_s}}:\GL_n(A,t_{\gm_s}^{l_{s}}A)\to\GL_n(A_{t_{\gm_s}}) $$
\noindent
be injective. Then it is easy to see that for any $s$, we have
$$ \bigg[\Big[T_{ji}(t_{\gm_s}^{p_{s}}x_s\alpha),
{}^{T_{ji}(a_s)T_{ij}(a)}T_{ji}(\beta)\Big],g\bigg]\in
\Big[\big[\EU(2n,I,\Gamma),\EU(2n,J,\Delta)\big],\EU(2n,K,\Omega)\Big]. $$
\noindent
Since relative elementary subgroups are normal in $\GU(2n,A,\Lambda)$
(Theorem~\ref{main_BV}), it follows that
$$ [e,g]\in \Big[\big[\EU(2n,I,\Gamma),\EU(2n,J,\Delta)\big],
\EU(2n,K,\Omega)\Big]. $$
\par
When the generator is of the second kind, $e=[T_{ij}(\alpha),T_{ji}(\beta)]$,
a similar argument goes through, which is left to the reader.
\par
Now consider the generators of the 3rd  kind, namely, the conjugates
of the following type of elements, $e=T_{ij}(\alpha\beta)$. By the
normality of $\EU(2n,(I,\Gamma)\circ(J,\Delta))$, the
conjugates of $e$ are in $\EU(2n,(I,\Gamma)\circ(J,\Delta))$. We have
$$ [e,g]\in \big[\EU(2n,(I,\Gamma)\circ(J,\Delta)),
\GU(2n,K,\Omega)\big]. $$
\noindent
By the generalized commutator formula (Theorem~\ref{main_HVZ}), one obtains
$$ \big[\EU(2n,(I,\Gamma)\circ(J,\Delta)),\GU(2n,K,\Omega)\big]=
\big[\EU(2n,(I,\Gamma)\circ(J,\Delta)),\EU(2n,K,\Omega)\big]. $$
\noindent
Now applying Lemma~\ref{Lem:Habdank}, we finally get
$$ \big[\EU(2n,(I,\Gamma)\circ(J,\Delta)),\EU(2n,K,\Omega)\big]\subseteq
\Big[\big[\EU(2n,I,\Gamma),\EU(2n,J,\Delta)\big],\EU(2n,K,\Omega)\Big]. $$
\noindent
Therefore,
$[e,g]\in\Big[\big[\EU(2n,I,\Gamma),\EU(2n,J,\Delta)\big],\EU(2n,K,\Omega)\Big]$.
This proves our claim. Thus we established~(\ref{jhgfsap}) for all type
of generators $e$ of~(\ref{ggsswwi}).
\par
To finish the proof, let
$e\in\big[\EU(2n,I,\Gamma),\GU(2n,J,\Delta)\big]=
\big[\EU(2n,I,\Gamma),\EU(2n,J,\Delta)\big]$, and $g\in\GU(2n,K,\Omega)$.
Then by Lemma~\ref{Comgenerator},
$$ e={}^{c_1}e_{1}\times{}^{c_2}e_{2}\times\cdots\times {}^{c_k}e_{k}, $$
\noindent
with $c_i\in\EU(2n,A,\Lambda)$ and $e_{i}$ takes any of the forms
in~(\ref{ggsswwi}). Since the relative elementary subgroups are normal,
Identity (C$2^+$) implies that it suffices to show that
$$ [{}^{c_i}e_{i},g]\in
\Big[\big[\EU(2n,I,\Gamma),\EU(2n,J,\Delta)\big],\EU(2n,K,\Omega)\Big],
\qquad i=1,\dots,k. $$
\noindent
Now, since both the relative elementary subgroups and the principal
congruence subgroups are normal, Identity (C5) further reduces the
problem to verification of the
inclusions
$$ [e_i,g]\in
\Big[\big[\EU(2n,I,\Gamma),\EU(2n,J,\Delta)\big],\EU(2n,K,\Omega)\Big],
\qquad i=1,\dots,k. $$
\noindent
But this is exactly what has been shown above. This completes the proof
of Theorem~\ref{main_triple}, the rest is now an exercise.


\section{Multiple commutator formulas for group functors}\label{sec10}

Here we finish the proof of Theorems~\ref{main_multi} and ~\ref{main_multi2}. In fact, we verify that
these theorems formally follow from Theorems~\ref{main_HVZ} and \ref{main_triple} and Lemmas~\ref{Lem:Habdank}
and \ref{Lem:GGG}.
\par
Namely, let $G_0,\dots,G_n$, $n\geq 3$, be subgroups of a given group
$G$. There are many ways to arrange brackets
$[\,\underline{\ \ }\,,\,\underline{\ \ }\,]$ in the sequence
$G_0,\ldots,G_n$ to correctly define the multi-commutator of these
subgroups. For instance, for $n=4$, we can have the following two
arrangements $\Big[\big [G_0,[G_1,G_2]\big], G_3\Big]$ and
$\Big[\big[G_0,G_1\big],\big[G_2, G_3\big]\Big]$, among others.
It is classically known that overall there are
$$ c_n=\frac{1}{(n+1)}\binom{2n}{n} $$
\noindent
ways to arrange brackets to form a multi-commutator of $n+1$
subgroups, where $c_n$ is the Catalan number. Any such arrangement
correctly defining a multi-commutator will be denoted by
$$ \big\llbracket G_0,G_1,\ldots,G_m \big\rrbracket. $$
\noindent
This notation is introduced to distinguish general such arrangements
from the standard multi-commutator $[G_0,\ldots,G_n]$, which is
usually interpreted as the left-normed commutator,
$$ [G_0,\ldots,G_n]=
\Big[\dots\big[[G_0,G_1],G_2\big],\dots,G_n\Big]. $$
\par
Let us fix the axiomatic setting for the proof of Theorems~\ref{main_multi} and \ref{main_multi2}.
Let $A$ be a ring. For each two-sided ideal $I$ of $A$, let $E(I)$
and $G(I)$ be subgroups of $G=G(A)$ such that $E(I)$ is a normal
subgroup of $G(I)$. Assume, for any three two-sided ideals $I,J$
and $K$ of $A$ the following holds
\par\smallskip
(M1) \ $E(I)\subseteq E(J)$ and $G(I)\subseteq G(J)$,
\par\smallskip
(M2) \  $\big[E(I),G(J)\big]=\big[E(I),E(J)\big]$,
\par\smallskip
(M3) \ $\Big[\big[E(I),G(J)\big],G(K)\Big]=\Big[\big[E(I),E(J)\big],E(K)\Big]$,
\par\smallskip
(M4) \ $E(I\circ J)\subseteq\big[E(I),E(J)\big]\subseteq\big[E(I),G(J)\big]
\subseteq\big[G(I),G(J)\big]\subseteq G(I\circ J)$.
\par\smallskip\noindent
Recall, that here we denote by $I\circ J=IJ+JI$ the symmetrised product
of the ideals $I$ and $J$. This operation is not associative, so
when writing $I_0\circ\ldots\circ I_m$ we assume that this is
the left-normed product. In particular, $I\circ J\circ K=(I\circ J)\circ K$.

\begin{Exp} Let $A$ be a quasi-finite $R$-algebra. The main results
of~\cite{RHZZ2} show that for any two-sided ideal $I$ of $A$,
$E(I)=E_n(A,I)$ and $G(I)=\GL_n(A,I)$ satisfy Conditions (M1)--(M4).
\end{Exp}
\begin{Exp} Let $(\FormR)$ be a quasi-finite form ring and let
$E(I_i)=\EU(2n,I_i,\Gamma_i)$ and $G(I_i)=\GU(2n,I_i,\Gamma_i)$.
Then Lemma~\ref{Lem:Habdank}, Theorem~\ref{main_HVZ},  Lemma~\ref{Lem:GGG}
and Theorem~\ref{main_triple} show that Conditions (M1)--(M4) are satisfied
in this setting.
\end{Exp}
The following lemma proves Theorem~\ref{main_multi}.
\begin{Lem}\label{comain}
Let $A$ be an $R$-algebra, $I_i $, $i=0,...,m$, be two-sided ideals
of $A$. Assume that subgroups $G(I)$ and $E(I)$ satisfy Conditions
{\rm (M1)--(M4)}. Then
\begin{equation}\label{corll11}
\big[E(I_0),G(I_1),G(I_2),\ldots,G(I_m)\big]=
\big[E(I_0),E(I_1),E(I_2),\ldots,E(I_m)\big].
\end{equation}
\end{Lem}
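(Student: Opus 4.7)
The plan is to prove Lemma~\ref{comain} by induction on $m$, the crucial observation being a ``linearization'' of the left-normed multi-commutator
\[
H_r := \big[E(I_0),E(I_1),\ldots,E(I_r)\big]
\]
into a double commutator with a single collected ideal on the left. Once that linearization is in place, one application of (M3) will close the inductive step.

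First I would establish the linearization identity
\[
H_r = \big[E(J_{r-1}),E(I_r)\big],\qquad J_{r-1}:=I_0\circ I_1\circ\ldots\circ I_{r-1},
\]
for every $r\geq 1$, by a sub-induction on $r$. The base $r=1$ is tautological. For the inductive step, the assumption $H_{r-1}=[E(J_{r-2}),E(I_{r-1})]$ together with (M4) yields the sandwich $E(J_{r-1})\subseteq H_{r-1}\subseteq G(J_{r-1})$, since $J_{r-1}=J_{r-2}\circ I_{r-1}$. Taking commutators with $E(I_r)$, monotonicity (M1) squeezes $H_r=[H_{r-1},E(I_r)]$ between $[E(J_{r-1}),E(I_r)]$ and $[G(J_{r-1}),E(I_r)]$. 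The symmetry of commutators together with (M2) collapses the upper bound,
\[
[G(J_{r-1}),E(I_r)]=[E(I_r),G(J_{r-1})]=[E(I_r),E(J_{r-1})]=[E(J_{r-1}),E(I_r)],
\]
so the two bounds coincide and force $H_r=[E(J_{r-1}),E(I_r)]$.

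The main induction on $m$ is then immediate. The base $m=1$ is exactly (M2). For $m\geq 2$, the inductive hypothesis converts the left-hand side of~(\ref{corll11}) into $[H_{m-1},G(I_m)]$, while the right-hand side is $H_m=[H_{m-1},E(I_m)]$; the inclusion $\supseteq$ is trivial from $E(I_m)\subseteq G(I_m)$. For $\subseteq$, the linearization yields $H_{m-1}=[E(J_{m-2}),E(I_{m-1})]$, so (M2) applied to the inner commutator lets me replace $E(I_{m-1})$ by $G(I_{m-1})$, giving
\[
[H_{m-1},G(I_m)]=\big[[E(J_{m-2}),G(I_{m-1})],G(I_m)\big],
\]
which by (M3) equals $[[E(J_{m-2}),E(I_{m-1})],E(I_m)]=[H_{m-1},E(I_m)]=H_m$, as required.

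The only conceptual obstacle is recognizing the linearization itself; everything after that is a transparent use of the four axioms. In particular, no application of the Hall--Witt identity is required at this group-functor level, because (M3) already encodes the triple commutator identity that was proved, with considerable effort, as Theorem~\ref{main_triple} in the unitary setting. The heavy lifting of the paper thus sits entirely in the verification of (M2)--(M4); Lemma~\ref{comain} itself is a short exercise driven by the sandwich inclusions in (M4).
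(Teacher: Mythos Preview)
Your proof is correct. It differs from the paper's in a structural way worth noting: you first isolate and prove the linearization identity $H_r=[E(J_{r-1}),E(I_r)]$ (essentially the left-normed special case of the paper's later Lemma~\ref{main4}), and then use it to recast the inner $(m-1)$-fold elementary commutator in a form where a single application of (M3) closes the inductive step. The paper instead applies (M3) immediately to the three leftmost factors, collapses $[E(I_0),E(I_1)]$ into $G(I_0\circ I_1)$ via (M4), invokes the induction hypothesis on the resulting $m$-ideal commutator (after swapping the two innermost arguments by (C6) so that $E(I_2)$ sits in the leading slot), and then re-expands $E(I_0\circ I_1)\subseteq[E(I_0),E(I_1)]$ via (M4). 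Both routes use precisely the same axioms; your approach has the virtue of making the recurring ``collect-into-one-ideal'' mechanism explicit as a standalone lemma, which the paper only formalizes later. One minor remark: the sandwich $[E(J_{r-1}),E(I_r)]\subseteq H_r\subseteq[G(J_{r-1}),E(I_r)]$ does not actually use (M1); it is just elementary monotonicity of the commutator bracket under inclusion of arguments.
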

\begin{proof}
The proof proceeds by induction on $m$. For $i=1$ this is
Condition (M2). For $i=2$, this is Condition (M3) which will be
the first step of induction. Suppose the statement is valid for
$m-1$, when there are $m$ ideals in the commutator formula.
By Condition (M3), we have
\begin{equation*}
\bigg[\Big[\big[E(I_0),G(I_1)\big],G(I_2)\Big],G(I_3),\ldots,G(I_m)\bigg]=
\bigg[\Big[\big[E(I_0),E(I_1)\big],E(I_2)\Big],G(I_3),\ldots,G(I_m)\bigg].
\end{equation*}
On the other hand, by Condition (M4) one has
$[E(I_0),E(I_1)]\subseteq G(I_0I_1+I_1I_0)$. Thus
\begin{equation*}
\bigg[\Big[\big[E(I_0),E(I_1)\big],E(I_2)\Big],G(I_3),\ldots,G(I_m)\bigg]
\subseteq
\bigg[\Big[G(I_0I_1+I_1I_0),E(I_2)\Big],G(I_3),\ldots,G(I_m)\bigg].
\end{equation*}
\noindent
Since there are $m$ ideals involved in the commutator subgroups on the
right hand side, we can apply the induction hypothesis and get
\begin{equation*}
\bigg[\Big[G(I_0I_1+I_1I_0),E(I_2)\Big],G(I_3),\ldots,G(I_m)\bigg]=\\
\bigg[\Big[E(I_0I_1+I_1I_0),E(I_2)\Big],E(I_3),\ldots,E(I_m)\bigg].
\end{equation*}
\noindent
Finally, invoking Condition (M4) once more, we get $E(I_0I_1+I_1I_0)\subseteq
\big[E(I_0),E(I_1)\big]$. Substituting this inclusion in the above equality
we can conclude that the left hand side of~(\ref{corll11}) is contained in
the right hand side. Since the opposite inclusion is obvious, this completes
the proof.
\end{proof}

Now, we can go one step further, and show that in fact it does not matter
where the elementary subgroup appears in the multiple commutator
formula.

\begin{Lem}\Label{main2}
Let $A$ be an $R$-algebra and $I_i $, $i=0,...,m$, be two-sided ideals
of $A$. Assume that subgroups $G(I)$ and $E(I)$ satisfy Conditions
{\rm (M1)--(M4)}. Let $G_i$ be subgroups of $G(A)$ such that
$$ E(I_i)\subseteq G_i\subseteq G(I_i),\quad\text{ for } i=0,\ldots, m. $$
\noindent
If there is an index $j$ such that $G_j=E(I_j)$, then
\begin{equation}\label{comain2}
\big[G_0,G_1,\ldots,G_m\big]=\Big[E(I_0),E(I_1),E(I_2),\ldots,E(I_m)\Big].
\end{equation}
\end{Lem}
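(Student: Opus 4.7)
The inclusion $\supseteq$ in~(\ref{comain2}) is immediate: from $E(I_i)\subseteq G_i$ and the monotonicity of the left-normed commutator subgroup in each of its slots, one has $[E(I_0),\ldots,E(I_m)]\subseteq [G_0,\ldots,G_m]$. For the reverse inclusion, the edge case $j=0$ is handled at once, since monotonicity followed by Lemma~\ref{comain} gives
$$[G_0,G_1,\ldots,G_m]\subseteq [E(I_0),G(I_1),\ldots,G(I_m)]=[E(I_0),E(I_1),\ldots,E(I_m)];$$
so we may assume $j\ge 1$ from here on.

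The strategy is to collapse the prefix $[G_0,\ldots,G_{j-1}]$ into a single principal congruence subgroup via the rightmost inclusion of (M4), exchange it with its elementary counterpart via the standard commutator formula (M2), and then re-expand via the leftmost inclusion of (M4). Set $K=(\cdots((I_0\circ I_1)\circ I_2)\circ\cdots)\circ I_{j-1}$, the left-normed symmetrised product of the first $j$ ideals. Two straightforward inductions on $j$, using (M4) one factor at a time, yield
$$[G_0,G_1,\ldots,G_{j-1}]\subseteq G(K)\qquad\text{and}\qquad E(K)\subseteq [E(I_0),E(I_1),\ldots,E(I_{j-1})].$$
Combining the first containment with (M2) and with the commutativity of commutator subgroups from~(C6), and then invoking the second containment, one obtains
$$[G_0,\ldots,G_{j-1},E(I_j)]\subseteq [G(K),E(I_j)]=[E(I_j),E(K)]\subseteq [E(I_0),E(I_1),\ldots,E(I_j)].$$

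The remaining factors $G_{j+1},\ldots,G_m$ are now appended by monotonicity, yielding
$$[G_0,G_1,\ldots,G_m]\subseteq [E(I_0),\ldots,E(I_j),G(I_{j+1}),\ldots,G(I_m)]\subseteq [E(I_0),G(I_1),\ldots,G(I_m)],$$
where the second inclusion uses $E(I_i)\subseteq G(I_i)$ for $i=1,\ldots,j$. Lemma~\ref{comain} identifies the right-hand side with $[E(I_0),E(I_1),\ldots,E(I_m)]$, which closes the argument. The only piece requiring actual verification is the twin induction on $j$ for the two displayed containments involving $K$; both are routine iterated applications of (M4), so no genuine obstacle is anticipated and the bulk of the work has already been done in Lemma~\ref{comain}.
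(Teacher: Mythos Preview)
Your proof is correct and follows essentially the same strategy as the paper: collapse the prefix $[G_0,\ldots,G_{j-1}]$ into $G(K)$ via iterated (M4), exchange $G(K)$ for $E(K)$ via (M2), re-expand via (M4), and finish with Lemma~\ref{comain}. The paper phrases this as an induction on $j$ (with $j=0,1$ treated as base cases) rather than a direct argument, but the substance of the two proofs is identical.
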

\begin{proof} For brevity, denote $E(I_i)$ by $E_i$. For a fixed
$m$ the proof proceeds on induction on $j$. As the base of induction
one takes $j=0$, which is the previous lemma. When $j=1$ one has
$\big[G_0,E_1\big]=\big[E_1, G_0\big]$, so this case reduces to the
case of $j=0$.
\par
For $2\le j\le m$ we can argue as follows. By assumption
\begin{multline*}
\big[G_0,G_1,\ldots,G_j,G_{j+1},\ldots, G_m\big]=
\Big[\big[G_0,G_1,\ldots G_j\big],G_{j+1},\ldots,G_m\Big]=\\
\Big[\big[G_0,G_1,\ldots,G_{j-1},E_j\big],G_{j+1},\ldots,G_m\Big]=
\Big[\big[[G_0,G_1,\ldots,G_{j-1}],E_j\big],G_{j+1},\ldots,G_m\Big].
\end{multline*}
\noindent
Now, repeated application of the rightmost inclusion from
Condition (M4) shows that
$$ \big[G_0,G_1,\ldots,G_{k-1}\big]\subseteq G(I_0\circ\ldots\circ I_{k-1}). $$
\noindent
Combining Conditions (M2) and (M4), we get
\begin{multline*}
\Big[\big[E_0,E_1,\ldots,E_{k-1}\big],E_k\Big]\subseteq
\Big[\big[G_0,G_1,\ldots G_{k-1}\big],E_k\Big]\\
\subseteq\big [G(I_0\circ\ldots\circ I_{k-1}), E_k\big]=
\big [E(I_0\circ\ldots\circ I_{k-1}), E_k\big]\subseteq
\Big [\big [E_0,E_1,\ldots, E_{k-1}\big], E_k\Big],
\end{multline*}
\noindent
and thus
\begin{equation*}
\big [G_0,G_1,\ldots,G_{k-1},E_k\big]=
\Big [\big [E_0,E_1,\ldots,E_{k-1}\big],E_k\Big].
\end{equation*}
\noindent
Substituting this into our commutator, we see that
\begin{equation*}
\big [G_0,G_1,\ldots,G_m\big]=
\big [E_0,E_1,\ldots,E_{k-1},E_k,G_{k+1},\ldots,G_{m}\big],
\end{equation*}
\noindent
and it only remains to invoke the previous lemma.
\end{proof}

Now we are all set for the final round of computation, to show
that it does not matter how the brackets are arranged either.
In particular, this proves Theorem~\ref{main_multi2}.

\begin{Lem}\Label{main3}
Let $A$ be an $R$-algebra and $I_i $, $i=0,...,m$, be two-sided ideals
of $A$. Assume that subgroups $G(I)$ and $E(I)$ satisfy Conditions
{\rm (M1)--(M4)}. Let $G_i$ be subgroups of $G(A)$ such that
$$ E(I_i)\subseteq G_i\subseteq G(I_i),\quad\text{ for } i=0,\ldots, m. $$
\noindent
If there is an index $j$ such that $G_j=E(I_j)$, then
\begin{equation}\label{hhalk}
\big\llbracket G_0,G_1,\ldots,G_m\big\rrbracket=
\big\llbracket E(I_0),E(I_1),\ldots,E(I_m)\big\rrbracket.
\end{equation}
\end{Lem}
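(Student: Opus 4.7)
The plan is to prove Lemma~\ref{main3} by induction on $m$, jointly with an auxiliary identity (P), which is the content of Theorem~\ref{main_multi3} in the present axiomatic framework:
\[
\llbracket E(I_0),\ldots,E(I_m)\rrbracket = [E(I_L),E(I_R)],
\]
where the outermost pair of brackets of $\llbracket\cdot\rrbracket$ splits between positions $k$ and $k+1$, and $I_L$, $I_R$ denote the symmetrised products $I_0\circ\cdots\circ I_k$ and $I_{k+1}\circ\cdots\circ I_m$ with bracketings inherited from the left and right blocks. The base case $m=1$ is trivial for (P) and is exactly (M2) for Lemma~\ref{main3}.

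For the inductive step of (P), I would write $\llbracket E(I_0),\ldots,E(I_m)\rrbracket = [L_E,R_E]$ and apply the inductive (P) inside each block, so that $L_E$ is either $E(I_L)$ (when $k=0$) or a double commutator $[E(I_{L'}),E(I_{L''})]$, and symmetrically for $R_E$. Iterating (M4) yields the sandwich $E(I_L)\subseteq L_E\subseteq G(I_L)$ and the analogous one for $R_E$, which makes $[E(I_L),E(I_R)]\subseteq [L_E,R_E]$ automatic. For the reverse inclusion, I would bound $[L_E,R_E]\subseteq [L_E,G(I_R)]$, and apply Lemma~\ref{main2} (the already proved left-normed case) to the triple commutator $[[E(I_{L'}),E(I_{L''})],G(I_R)]$, which collapses it to $[[E(I_{L'}),E(I_{L''})],E(I_R)]$; one more invocation of (P) for $m=2$ (which follows directly from (M4) together with (M2)) rewrites this as $[E(I_L),E(I_R)]$. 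When $k=0$ the reverse inclusion is handled by a single application of (M2) with no recourse to Lemma~\ref{main2}.

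For the inductive step of Lemma~\ref{main3} itself, I would split $\llbracket G_0,\ldots,G_m\rrbracket = [L,R]$ at the outermost bracket. Since $[A,B]=[B,A]$ as subgroups, I may assume the distinguished elementary factor $G_j = E(I_j)$ sits in the left block, so that the inductive hypothesis identifies $L$ with $L_E = \llbracket E(I_0),\ldots,E(I_k)\rrbracket$. The containment $[L_E,R_E]\subseteq [L_E,R]$ is immediate from $R_E\subseteq R$. For the reverse, $R\subseteq G(I_R)$ gives $[L_E,R]\subseteq [L_E,G(I_R)]$, and the same collapsing argument as for (P) (using (P) to express $L_E$ as a double commutator, then Lemma~\ref{main2} or directly (M2) to absorb $G(I_R)$ into $E(I_R)$) reduces this to $[E(I_L),E(I_R)]$, which by the already-proved (P) equals $\llbracket E(I_0),\ldots,E(I_m)\rrbracket$.

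The main obstacle is recognising that the joint induction with (P) is necessary: Lemma~\ref{main3} cannot be derived directly from Lemma~\ref{main2}, because an arbitrary bracketing hides inner commutators that are a priori neither $E(I)$ nor $G(I)$ for any ideal $I$, so the sandwich hypothesis of Lemma~\ref{main2} does not apply at intermediate levels. The pivot identity (P) precisely addresses this gap by telescoping any bracketed product of elementary subgroups down to a double commutator $[E(I_L),E(I_R)]$, after which Lemma~\ref{main2} and (M2) suffice to complete each inductive step.
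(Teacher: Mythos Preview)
Your proof is correct, but takes a different route from the paper. The paper proceeds by looking for an \emph{innermost} leaf-pair $[G_i,G_{i+1}]$ in the bracketing that does \emph{not} involve the distinguished $E_j$; when such a pair exists, it collapses that pair via (M4) to a single factor $G(I_i\circ I_{i+1})$, reducing the number of factors by one and invoking induction directly. When no such pair exists, the paper argues that the bracketing must be a caterpillar (one side of each outermost split is a single leaf), hence left-normed up to the commutativity $[H,K]=[K,H]$, and Lemma~\ref{main2} finishes. By contrast, you split at the \emph{outermost} bracket and run a joint induction with the auxiliary identity (P), which is exactly the paper's Lemma~\ref{main4}. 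Your scheme is sound, but your final claim that the joint induction with (P) is \emph{necessary} is not right: the paper derives Lemma~\ref{main3} from Lemma~\ref{main2} and (M1)--(M4) alone, and only afterwards obtains Lemma~\ref{main4} as a corollary. What your route buys is a uniform treatment with no case split on where $E_j$ sits; what the paper's route buys is that Lemma~\ref{main3} is self-contained, independent of the product formula (P).
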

\begin{proof}
To prove (\ref{hhalk}), we proceed by induction on $m$.
For $m=0$ and $m=1$ there is nothing to prove. For $m=2$, the
commutator $\big\llbracket G_0,G_1,G_2\big\rrbracket$ can
be arranged in six possible ways
\begin{multline*}
\Big[\big[G_0,G_1\big],E_2\Big],\qquad
\Big[E_0,\big[G_1,G_2\big]\Big],\qquad
\Big[\big[E_0,G_1\big],G_2\Big],\\
\Big[\big[G_0,E_1\big],G_2\Big],\qquad
\Big[G_0,\big[E_1,G_2\big]\Big],\qquad
\Big[G_0,\big[G_1,E_2\big]\Big],
\end{multline*}
\noindent
of which the first two and the last four are reduced to each other
by (C6), the commutativity of commutator on subgroups. Thus, it
only matters, whether $E_j$ stands inside the inner bracket, or outside
of it. The case, where it stands inside, was already considered
in the previous lemma, so that it only remains to consider the
first of the above arrangements. Using Conditions (M1)--(M4) we get
\begin{multline*}
\Big[\big[E_0,E_1\big],E_2\Big]\subseteq\Big[\big[G_0,G_1\big],E_2\Big]
\subseteq \Big[\big[G(I_0),G(I_1)\big],E(I_2)\Big]\\
\subseteq \Big[G(I_0\circ I_1),E(I_2)\Big]=
\Big[E(I_0\circ I_1),E(I_2)\Big]\\
\subseteq \Big[\big[E(I_0),E(I_1)\big],E(I_2)\Big ]=
\Big[\big[E_0,E_1\big],E_2\Big],
\end{multline*}
\noindent
as claimed.
\par
For the main step of induction, we consider two cases. Suppose first
there is  a mixed commutator $[G_i,G_{i+1}]$ in
$\big \llbracket G_0,G_1,\ldots, G_m\big \rrbracket$, where neither
$G_i$ nor $G_{i+1}$ is the {\it fixed} elementary subgroup $E_j$. Then
\begin{align}
\big\llbracket G_0,G_1,\ldots,G_m \big \rrbracket
& =\big\llbracket G_0,G_1,\ldots,[G_i,G_{i+1}],\dots,G_m\big\rrbracket\notag  \\
&\subseteq\big\llbracket G_0,G_1,\ldots,\big [G(I_i),G(I_{i+1})\big],\dots,
G_m \big \rrbracket \label{ryan1} \\
&\subseteq\big\llbracket G_0,G_1,\ldots,G(I_iI_{i+1}+I_{i+1}I_i),\dots,
G_m \big \rrbracket.\notag
\end{align}
\noindent
Note that there is one fewer ideal involved in the last commutator
formula (i.e., $m-1$ ideals) which also contains an elementary subgroup,
and so by induction
\begin{align}
\big \llbracket G_0,G_1,\ldots,& G(I_iI_{i+1}+I_{i+1}I_i),\dots,G_m\big\rrbracket
\notag\\
&= \big \llbracket E_0,E_1,\ldots,E(I_iI_{i+1}+I_{i+1}I_i),\dots, E_m\big \rrbracket
\notag \\
& \subseteq  \big \llbracket E_0,E_1,\ldots,\big[E(I_i),E(I_{i+1})\big],\dots,E_m\big
\rrbracket \label{ryan2} \\
&=  \big \llbracket E_0,E_1,\ldots,E_m \big \rrbracket.  \notag
\end{align}
\noindent
Putting~\ref{ryan1} and~\ref{ryan2} together, we get
$$ \big \llbracket G_0,G_1,\ldots,G_m \big \rrbracket =
\big \llbracket E_0,E_1,\ldots,E_m \big \rrbracket. $$
\par
It only remains to consider the case, where the only double mixed
commutator of the form $[G_i,G_{i+1}]$ inside our arrangement
$\big\llbracket G_0,G_1,\ldots,G_m\big\rrbracket$ involves our
fixed elementary subgroup $E_j$.
Consider the outermost pairs of inner brackets in our multicommutator
$$ \big \llbracket G_0,G_1,\ldots,G_m\big\rrbracket=
\Big[\big\llbracket G_0,G_1,\ldots,G_k\big\rrbracket, \big
\llbracket G_{k+1},\ldots,G_m\big\rrbracket\Big]. $$
\noindent
If $1\le k\le m-2$, then {\it each\/} of the inner brackets
$\big\llbracket G_0,G_1,\ldots,G_k\big\rrbracket$
and $\big\llbracket G_{k+1},\ldots,G_m\big\rrbracket$ contains
a double mixed commutator, one of which leaves $E_j$ outside,
and this is the situation we just considered.
\par
This leaves us with the analysis of the case, where $k=0$ or
$k=m-1$, in other words, $\llbracket G_0,G_1,\ldots,G_m\big\rrbracket$
is one of the following
$$ \Big[G_0,\big\llbracket G_{1},\ldots,G_m\big\rrbracket\Big],\qquad
\Big[\big\llbracket G_0,G_1,\ldots, G_{m-1}\big\rrbracket,G_m\Big]. $$
\noindent
By commutativity of the commutator, the first of these situations
reduces to the second one.
\par
Repeating this argument, we see that -- modulo the commutativity
of the commutator -- the arrangement
$\llbracket G_0,G_1,\ldots,G_m\big\rrbracket$ is the left-normed
one, for which the previous lemma already guarantees that
$$ \big\llbracket G_0,G_1,\ldots,G_m\big\rrbracket
=\big[G_{i_0},G_{i_1},\dots,G_{i_m}\big]
=\big[E_{i_0},E_{i_1},\dots,E_{i_m}\big]
=\big\llbracket E_0,E_1,\ldots,E_m\big\rrbracket. $$
\noindent
This finishes the proof of Lemma~\ref{main3} and thus also of Theorem~\ref{main_multi2}.
\end{proof}

Our final lemma proves Theorem~\ref{main_multi3}. Recall that the product of ideals are not associative. Thus in the following Lemma the bracketings
of the form ideals on the right hand side should correspond to the
bracketings of commutators on the left-hand side.

\begin{Lem}\Label{main4}
Let $A$ be an $R$-algebra and $I_i $, $i=0,...,m$, be two-sided ideals
of $A$. Assume that subgroups $G(I)$ and $E(I)$ satisfy Conditions
{\rm (M1)--(M4)}. Then
\begin{multline*}
\Big[\big\llbracket E(I_0),E(I_1),\ldots,E(I_{k})\big\rrbracket, \big
\llbracket E(I_{k+1}),\ldots,E(I_m)\big\rrbracket\Big]=\\
\big[E(I_0\circ\ldots\circ I_k\big),
E(I_{k+1}\circ\ldots\circ I_m)\big],
\end{multline*}
\noindent
where the bracketing of symmetrised products on the right hand side
coincides with the bracketing of the commutators on the left hand side.
\end{Lem}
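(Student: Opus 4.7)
The plan is to proceed by induction on $m$, establishing in parallel two statements for every binary-tree bracketing $T$ of the multi-commutator on $m+1$ leaves: \emph{(A)} the level sandwich
\[
E(P_T)\subseteq\big\llbracket E(I_0),\ldots,E(I_m)\big\rrbracket_T\subseteq G(P_T),
\]
where $P_T$ denotes $I_0\circ\cdots\circ I_m$ bracketed according to $T$; and \emph{(B)} the lemma itself, namely that for the outermost split $T=T_L\ast T_R$ one has $\big\llbracket E(I_0),\ldots,E(I_m)\big\rrbracket_T=[E(P_{T_L}),E(P_{T_R})]$. The base case $m=1$ is exactly condition (M4) for (A) and a tautology for (B).

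For the inductive step, write $X$ and $Y$ for the two factors of the outermost commutator $[X,Y]$. Property (A) for $T$ is immediate by sandwiching $[X,Y]$ between $[E(P_{T_L}),E(P_{T_R})]$ and $[G(P_{T_L}),G(P_{T_R})]$ via inductive (A), and enclosing these extremes inside $E(P_T)$ and $G(P_T)$ via (M4). For property (B), the inclusion $[E(P_{T_L}),E(P_{T_R})]\subseteq[X,Y]$ is monotonicity of the commutator together with inductive (A).

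For the reverse inclusion $[X,Y]\subseteq[E(P_{T_L}),E(P_{T_R})]$, I split into cases according to whether the subtrees are leaves. If either $T_L$ or $T_R$ is a single leaf the argument collapses to (M2): for instance, if $X=E(P_{T_L})$, then using $Y\subseteq G(P_{T_R})$ from inductive (A) one has $[X,Y]\subseteq[E(P_{T_L}),G(P_{T_R})]=[E(P_{T_L}),E(P_{T_R})]$. If neither is a leaf, inductive (B) applied to the strictly smaller bracketing $T_L$ rewrites $X=[E(A),E(B)]$ with $A\circ B=P_{T_L}$, and the key chain of inclusions becomes
\[
[X,Y]\subseteq\big[[E(A),E(B)],G(P_{T_R})\big]\subseteq\big[[E(A),G(B)],G(P_{T_R})\big]=\big[[E(A),E(B)],E(P_{T_R})\big],
\]
where the last step is precisely the triple commutator axiom (M3). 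Then (M4) absorbs $[E(A),E(B)]$ into $G(P_{T_L})$, and (M2) converts $[G(P_{T_L}),E(P_{T_R})]$ into $[E(P_{T_L}),E(P_{T_R})]$, completing the inductive step.

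The main obstacle is the application of (M3): it is the triple commutator formula that allows contracting $G(P_{T_R})$ to $E(P_{T_R})$ inside a doubly-nested commutator without inflating the form ideal. All other steps are formal manipulations using monotonicity together with (M2) and (M4); what makes the multi-ideal statement non-trivial is that (M3), whose proof is the technical core of the paper (Theorem~\ref{main_triple}), is needed to pull a $G$ past a commutator bracket and turn it into an $E$.
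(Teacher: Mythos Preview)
Your argument is correct. You proceed by a self-contained induction on $m$, simultaneously establishing the level sandwich $E(P_T)\subseteq\llbracket E(I_0),\ldots,E(I_m)\rrbracket_T\subseteq G(P_T)$ and the desired identity, and you invoke (M3) directly at the inductive step to collapse $\big[[E(A),G(B)],G(P_{T_R})\big]$ to $\big[[E(A),E(B)],E(P_{T_R})\big]$.

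The paper takes a shorter route. It writes a single chain
\[
[X,Y]\subseteq[G(P_L),Y]=[E(P_L),Y]\subseteq[E(P_L),G(P_R)]=[E(P_L),E(P_R)]\subseteq[X,Y],
\]
using only (M4) for the inclusions and (M2) for the last equality. The one non-obvious step, $[G(P_L),Y]=[E(P_L),Y]$ with $Y=\llbracket E(I_{k+1}),\ldots,E(I_m)\rrbracket$, is not a bare application of (M2): it is an instance of the already-proven Lemma~\ref{main3} (equivalently Theorem~\ref{main_multi2}), applied to the bracketing in which only the outermost slot is non-elementary. So the paper's proof leans on the machinery built up in Lemmas~\ref{comain}--\ref{main3}, and in particular (M3) enters there rather than here.

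What this comparison buys: the paper's organisation isolates the hard work (passing a $G$ through nested brackets via (M3)) in Lemma~\ref{main3}, so that Lemma~\ref{main4} becomes a two-line corollary. Your approach is logically equivalent but re-derives that passage from scratch, making the role of (M3) explicit at this point. Your final remark that (M3) is ``the technical core'' is accurate for the overall section, though in the paper's arrangement main4 itself no longer needs to invoke it directly.
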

\begin{proof}
Alternated application of (M4) and (M2) shows that
\begin{multline*}
\Big[\big\llbracket E(I_0),E(I_1),\ldots,E(I_{k})\big\rrbracket, \big
\llbracket E(I_{k+1}),\ldots,E(I_m)\big\rrbracket\Big]\le \\
\Big[G(I_0\circ\ldots\circ I_k\big),
\llbracket E(I_{k+1}),\ldots,E(I_m)\big\rrbracket\Big]=
\Big[E(I_0\circ\ldots\circ I_k\big),
\llbracket E(I_{k+1}),\ldots,E(I_m)\big\rrbracket\Big]\le \\
\big[E(I_0\circ\ldots\circ I_k\big),G(I_{k+1}\circ\ldots\circ I_m)\big]=
\big[E(I_0\circ\ldots\circ I_k\big),E(I_{k+1}\circ\ldots\circ I_m)\big]\le\\
\Big[\big\llbracket E(I_0),E(I_1),\ldots,E(I_{k})\big\rrbracket, \big
\llbracket E(I_{k+1}),\ldots,E(I_m)\big\rrbracket\Big],
\end{multline*}
\noindent
as claimed.
\end{proof}


\section{Final remarks}\label{sec11}

The present paper grew out of desire to prove the {\it general\/}
multiple commutator formula, which simultaneously generalises
our multiple commutator formula and nilpotent filtration of
relative $\K_1$, see \cite{BRN}. Of course, the general
commutator formula can only hold for finite-dimensional rings.
It can be stated as follows.

\begin{Prob}
Let $R$ be a ring of finite Bass--Serre dimension
$\delta(R)=d<\infty$, and let $(I_i,\Gamma_i)$, $1\le i\le m$, be
form ideals of $(R,\Lambda)$. Prove that for any $m\ge d$ one has
\begin{multline*}
\big[\GU(2n,I_0,\Gamma_0),\GU(2n,I_1,\Gamma_1),\GU(2n,I_2,\Gamma_2),
\ldots,\GU(2n,I_m,\Gamma_m)\big]=\\
=\big[\EU(2n,I_0,\Gamma_0),\EU(2n,I_1,\Gamma_1),\EU(2n,I_2,\Gamma_2),
\ldots,\EU(2n,I_m,\Gamma_m)\big].
\end{multline*}
\end{Prob}

In fact, recently we succeeded in proving such a formula for general
linear groups \cite{HSVZmult}. However the proof there critically
depends on a number of deep external results. In this respect the
case of unitary groups seems to be very different, since in the
context of unitary groups even the most basic results are simply
not there in the existing literature.
\par
For instance, the proof in \cite{HSVZmult} {\it starts\/} with the
following classical observation by Alec Mason and Wilson Stothers
\cite{Mason74}, which serves as the base of induction.

\begin{The}[{Mason--Stothers}]
Let $R$ be a ring, $I$ and $J$ be two two-sided ideals of $R$.
Assume that $n\ge\sr(R),3$. Then
$$ \big[\GL(n,R,I),\GL(n,R,J)\big]=\big[E(n,R,I),E(n,R,J)\big]. $$
\end{The}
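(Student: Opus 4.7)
The inclusion $\bigl[E(n,R,I),E(n,R,J)\bigr] \subseteq \bigl[\GL(n,R,I),\GL(n,R,J)\bigr]$ is immediate from $E(n,R,K) \le \GL(n,R,K)$. For the reverse inclusion, my plan is to use the stable range hypothesis $n \ge \sr(R)$ to decompose every element of a relative general linear group into an explicitly elementary factor and a small factor representing a relative $K_1$ class, and then expand the commutator $[g,h]$ term by term.

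Concretely, given $g \in \GL(n,R,I)$ and $h \in \GL(n,R,J)$, Bass's surjective stability provides decompositions $g = e\,c$ and $h = e'\,c'$, where $e \in E(n,R,I)$, $e' \in E(n,R,J)$, and $c, c'$ are matrices of bounded size representing the classes of $g$ and $h$ in $K_1(R,I)$ and $K_1(R,J)$ respectively, supported on a common $d \times d$ block with $d \le \sr(R)$. Expanding $[g,h] = [ec, e'c']$ via the commutator identities (C1)--(C2) writes $[g,h]$ as a product of $E(n,R)$-conjugates of the four commutators $[e, e']$, $[e, c']$, $[c, e']$, and $[c, c']$. Since $E(n,R,I)$ and $E(n,R,J)$ are both normal in $\GL(n,R)$, the mixed commutator $\bigl[E(n,R,I), E(n,R,J)\bigr]$ is normal in $\GL(n,R)$ as well, so it suffices to place each of the four terms inside $\bigl[E(n,R,I), E(n,R,J)\bigr]$. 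The first is automatic; $[e, c']$ and $[c, e']$ lie in $\bigl[E(n,R,I), \GL(n,R,J)\bigr] = \bigl[E(n,R,I), E(n,R,J)\bigr]$ (and its symmetric counterpart) by the GL analogue of Theorem~\ref{main_HVZ}.

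The delicate term is $[c, c']$. An entrywise computation gives $[c, c'] \in \GL(n, R, IJ+JI)$, and I would show $[c, c'] \in E(n, R, IJ+JI)$ by invoking the relative Whitehead lemma for $n \ge 3$: writing $[c, c']$ as a product of matrices of the form $\mathrm{diag}(w, w^{-1}, 1, \ldots, 1)$ with $w \in 1 + (IJ+JI)$, which admit an explicit factorisation as elementary matrices of level $IJ+JI$. One then concludes via the GL analogue of Lemma~\ref{Lem:Habdank}, which gives $E(n, R, IJ+JI) \subseteq \bigl[E(n,R,I), E(n,R,J)\bigr]$. The main obstacle is managing this last step: one needs enough room both for Bass's surjective stability to place $c, c'$ in a block small enough to let us compute $[c, c']$ explicitly, and for the relative Whitehead lemma to express the resulting diagonal form through elementary matrices of level $IJ+JI$. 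The compatibility of these two size constraints is precisely why the bound $n \ge \sr(R), 3$ appears in the statement, and an alternative route — closer in spirit to the proof of Lemma~\ref{Lem:GGG} in the paper — would be to establish the equality first at the stable level via a sandwich-theorem argument combined with Hall--Witt, and then pull back to level $n$ using injectivity of the stabilisation embedding on sufficiently small congruence subgroups.
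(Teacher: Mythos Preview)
The paper does not prove this statement. The Mason--Stothers theorem appears in \S\ref{sec11} (Final remarks) only as a cited classical result from \cite{Mason74}, mentioned to explain how the companion paper \cite{HSVZmult} handles the base of induction for the general linear case of Problem~1. There is therefore no proof in the present paper against which to compare your proposal.

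That said, a remark on the proposal itself. The overall architecture --- decompose $g$ and $h$ via surjective stability, expand $[ec,e'c']$ by (C1)--(C2), and dispose of the three terms involving $e$ or $e'$ via the relative commutator formula --- is sound and is indeed the shape of Mason's argument. The gap is in your treatment of $[c,c']$. You assert that $[c,c']$ can be ``written as a product of matrices of the form $\operatorname{diag}(w,w^{-1},1,\ldots,1)$'', but a commutator of two small congruence matrices has no reason to be diagonal, and the relative Whitehead lemma does not apply to it as stated. What actually happens under the stable-range hypothesis is different: one arranges, using the extra room $n\ge\sr(R),3$, that the residual factors $c$ and $c'$ live in blocks that \emph{commute with each other} (for instance, $c$ in a $1\times 1$ corner and $c'$ in a disjoint block, or both scalar in overlapping positions), so that $[c,c']=e$ outright and the problematic term simply vanishes. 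Your alternative ``stable sandwich plus injective stability'' route is also workable in principle, but note that pulling back from the stable level requires injective stability for relative $K_1$, which is a separate (and slightly more delicate) input than the surjective stability you already invoked.
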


For unitary groups, even such basic facts at the stable level
seem to be missing. After that the
proof in \cite{HSVZmult} proceeds by induction on $d$, which
depends on Bak's results \cite{B4}, precise form of injective
stability for $\K_1$, such as the Bass--Vaserstein theorem, etc.
It seems that to solve Problem~1 one has to rethink and expand
many aspects of structure theory of unitary groups, starting
with stability theorems \cite{BT,BPT,sinchuk}, more powerful
analogues of results on the superspecial unitary groups, than
what we established in \cite{RH,RH2,BRN}, etc. All these results
seem feasible, but to actually set them afoot might take a lot
of work.
\par
Let us mention two further problems closely related to the contents
of the present paper. Firstly, multiple commutator formulas are
relevant for the description of subnormal subgroups of
$\GU(2n,\FormR)$. Important progress in this direction was
recently obtained by the third author and You Hong \cite{ZZ1,youhong}.
But we feel that the bounds in these results can be improved
and hope to return to this problem with our new tools.
\par
Secondly, the generators constructed in Theorem~\ref{Comgenerator} are
a first approximation to the ``elementary'' generators of the
double commutator subgroups $[\EU(2n,I,\Gamma),\EU(2n,J,\Delta)]$.
Actually, building upon this theorem, we constructed a much
smaller set of generators, using which Alexei Stepanov was
able to prove finiteness results for relative commutators,
see \cite{yoga2,portocesareo}.
\par
We refer the interested reader to our forthcoming papers
\cite{yoga2,portocesareo,HSVZmult,HSVZ}, where these and some
other related problems are discussed in somewhat more detail.
\par
{\bf Acknowledgement.} We would like to thank Alexei Stepanov for close cooperation
on localisation methods over the last years. His contribution
to this circle of ideas was crucial, and in some cases decisive.
We thank Matthias Wendt who carefully studied \cite{RNZ1} and
suggested many improvements of the relative unitary commutator
calculus developed therein. Finally, we thank Rabeya Basu and
Ravi Rao who invited us to visit TIFR and IISER, for many
inspiring discussions and comparison of various localisation
methods.


\end{document}